\documentclass[11pt]{article}

\voffset = -28pt \hoffset = -54pt \textwidth = 6.6in \textheight = 9in

\usepackage{bigints}
\usepackage{IEEEtrantools}
\usepackage{amsfonts}
\usepackage[usenames,dvipsnames,svgnames,table]{xcolor} %%% Colour package %%%
\usepackage{enumerate} 
\usepackage{graphicx}
% \usepackage{anyfontsize}
%%% Numbering Package %%%
\usepackage{mathtools}

\usepackage{mathrsfs}
\usepackage{tikz-cd}
\usepackage{amsmath, amsthm}
\usepackage{imakeidx}
\usepackage[bookmarksnumbered]{hyperref}

\theoremstyle{definition}
\newtheorem{defn}{Definition}[section]
\newtheorem{thm}[defn]{Theorem}

\newtheorem{prop}[defn]{Proposition}
\newtheorem{cor}[defn]{Corollary}

\newtheorem{rem}[defn]{Remark}

\theoremstyle{definition}
\newtheorem{defin}{Definition}

\newtheorem{lema}[defin]{Lemma}

\makeindex[columns=2, intoc]
\hypersetup{
    linktoc=all,
    colorlinks=True,
    linkcolor=Blue,
    citecolor=ProcessBlue,% make the links colored
}
\usepackage{graphicx} % Required for inserting images

\title{On representations of the crystallization of the quantized function algebra $C(SU_{q}(n+1))$}
\author{Saikat Das\space and Ayan Dey}
\date{}
\begin{document}
\maketitle
\begin{abstract}
The crystal limit $C(K_{0})$ of the $q$-family of $C^{*}$-algebras $C(K_{q})$ was introduced by Giri \& Pal for all $K=SU(n+1),\, n\geq 2$. This article aims to prove that the crystal limits $C(K_{0})$ have the property that the representations of $C(K_{q})$ give rise to the representations of the crystallized algebra $C(K_{0})$ by sending generators of $C(K_{0})$ to the limit of (scaled) generators of $C(K_{q})$ and every representation of $C(K_{0})$ occurs in this way. This work addresses a question raised by Giri $\&$ Pal in $\cite{GirPal-2024}$. As a consequence, one can realize $C(K_{0})$ as the $C^{*}$-algebra generated by the limit operators of faithful representations of $C(K_{q})$. 
\end{abstract}
{\bf AMS Subject Classification No.:}
20G42, %Quantum groups (quantized function algebras) and their representations
46L67, %Quantum groups (operator algebraic aspects)
58B32 %Geometry of quantum groups
\\
{\bf Keywords.} Quantum groups, $q$-deformation, quantized function
algebras, representations.
\section{Introduction}
The crystallization of the $C^{*}$-algebra $C(K_{q})$ was introduced by Giri \& Pal in \cite{GirPal-2023} for type $A_{n}$ compact Lie groups $K=SU(n+1)$,\, $n\geq 2$ and subsequently by Matassa \& Yuncken in \cite{MatYun-2023} for all simple simply connected compact Lie groups $K$. These two definitions of crystallization are slightly different but it is expected that the two crystallizations give the same $C^{*}$-algebra. However, we will only work with the definition of crystallization given in \cite{GirPal-2023} where the authors had observed that for every irreducible representation of $C(K_{q})$, limits of the canonical generating elements, scaled appropriately, obey the same set of relations as $q\to 0^{+}$ and used
those relations to define the crystallized algebra. Throughout this article, we will denote $C(SU_{q}(n+1))$ by $A_{q}(n)$ for any $q\in [0,1)$. Furthermore, to make the notation more convenient, we may sometimes omit $n$ and write $A_{q}$ for $A_{q}(n)$. In \cite{GirPal-2024} Giri \& Pal studied irreducible representations of the crystallized algebra $A_{0}(n)$ in great detail. In particular, they classified all the irreducible representations of the crystallized algebra and proved that every irreducible representation of $A_{0}(n)$ is the norm limit of irreducible representations of $A_{q}(n)$ with appropriately scaled generators. The goal of this article is to extend this result for any arbitrary non-degenerate (respectively faithful) representation of $A_{0}(n)$ and, conversely, any non-degenerate (respectively faithful) representation of $A_{q}(n)$ gives rise to a non-degenerate (respectively faithful) representation of $A_{0}(n)$ in a natural way (see Theorems \ref{main1}, \ref{main2}).\\
Now we will say a few words about the content of this paper. In section 2, the representation theory of $A_{0}(n)$ is recalled very briefly, and some of the important results of \cite{GirPal-2024} are listed. We conclude this section by proving that the definition of irreducible representations of $A_{0}(n)$ is independent of the choice of a reduced form taken for a permutation $w\in S_{n+1}$. Section 3 is devoted to understanding the topology and Borel structure of the spectrum of $A_{q}(n)$. The crucial observation is that the spectrums $\hat{A}_{0}$ and $\hat{A}_{q}$ are Borel isomorphic via a specific map and a sufficient condition for factorization of irreducibles is described in terms of Bruhut ordering of $\mathcal{S}_{n+1}$. The initial part of section 4 is all about decomposing any arbitrary non-degenerate representation of $A_{q}(n)$ into multiple copies of a direct integral of irreducibles using a result from \cite{Dix-1977} and, finally we then come to the main results of this paper that any non-degenerate representation (respectively faithful) of $A_{0}(n)$ on a separable Hilbert space $\mathcal{H}$ is the norm limit of non-degenerate representations (respectively faithful) of $A_{q}(n)$ on the same Hilbert space $\mathcal{H}$ with appropriately scaled generators, and the converse. The two crucial steps of the proof of faithfulness for different values of $q\in [0,1)$ are identifying the minimal faithful part of a faithful representation and capturing faithfulness in terms of support of the underlying positive Borel measure of direct integrals. As a consequence, we will then be able to realize the crystallized algebra $A_{0}(n)$ as the $C^{*}$-algebra generated by the limit operators of faithful representations of $A_{q}(n)$ with suitably scaled generators (see Corollary \ref{cor}). 
\section{Irreducible representations of $A_{0}(n)$}
For any $q\in(0,1)$, $A_{q}(n)$ is a well-studied universal $C^{*}$-algebra with $(n+1)^{2}$ many generators $z_{i,j}(q)$ satisfying certain relations (see $\cite{GirPal-2023}$). Given two representations $\pi_{1}$ and $\pi_{2}$ of $A_{q}(n)$, their convolution product is defined by the formula $\pi_{1}*\pi_{2}:= (\pi_{1}\otimes \pi_{2})\circ \Delta_{q}$ where $\Delta_{q}: A_{q}\to A_{q}\otimes A_{q}$ is the comultiplication map on $A_{q}$ associated to the quantum group structure of $A_{q}$. $\mathcal{S}_{n+1}$ denotes the permutation group on $n+1$ letters and the collection $\{s_{i}=(i,i+1): 1\leq i\leq n\}$ of Coxeter generators generates the group $\mathcal{S}_{n+1}$. We use the symbol $S$ for the left shift/backward shift operator on $l^{2}(\mathbb{N})$ and, $q^{N}$ is the operator on $l^{2}(\mathbb{N})$ that acts on the standard basis $\{e_{n}\}_{n\geq 0}$ by, $e_{n}\to q^{n}e_{n}$ for all $n\in \mathbb{N}\cup \{0\}$. Then for any $s_{r}\in \mathcal{S}_{n+1}$ one can define,
\begin{align*}
    \pi_{s_{r}}^{(q)}(z_{i,j}(q)):= \left\{ 
    \begin{matrix}
        S\sqrt{1-q^{2N}} & i=j=r \\
        \sqrt{1-q^{2N}}S^{*} & i=j=r+1\\
        -q^{N+1} & i=r, j=r+1\\
        q^{N} & i=r+1 , j=r\\
        \delta_{i,j}I & \text{otherwise}.
        \end{matrix}\right..
\end{align*}
For $a,b \in \mathbb{N}$ with $a\leq b$, let us denote by $s_{[a,b]}$
the product $s_{b}s_{b-1}\cdots s_{a}$ in $\mathcal{S}_{n+1}$. Let
$1\leq k\leq n,\;1\leq b_{k}< b_{k-1}< \cdots < b_{1}\leq n,$ and $1\leq a_{i} \leq b_{i}$ for $1 \leq i \leq k$. It follows from the
strong exchange condition and the deletion condition in the characterization of the Coxeter system (see $\cite{BjoBre-2004}$) that $ w = s_{[a_{k},b_{k}]}s_{[a_{k-1},b_{k-1}]}\cdots s_{[a_{1},b_{1}]}$ is a reduced word in $\mathcal{S}_{n+1}$.
For any reduced word $w$ in this form we define,
\begin{center}
    $\pi_{w}^{(q)}:= \pi_{s_{[a_{k},b_{k}]}}^{(q)}* \pi_{s_{[a_{k-1},b_{k-1}]}}^{(q)}*\cdots *\pi_{s_{[a_{1},b_{1}]}}^{(q)}$.
\end{center}
Denote the maximal torus of $SU(n+1)$ by $\mathbb{T}\cong T^{n}$. Then, for any $t\in \mathbb{T}$ we have all one-dimensional irreducibles of $A_{q}(n)$ defined as follows,
\begin{center}
    $\chi_{t}^{(q)}(z_{i,j}(q)):= \left\{ 
    \begin{matrix}
        t_{1}\delta_{i,j} & i=1\\
        \overline{t_{n}}\delta_{i,j} & i=n+1\\
        \overline{t_{i-1}}t_{i}\delta_{i,j} & \text{otherwise}
        \end{matrix}\right.$
\end{center}
\begin{thm}[\cite{KorSoi-1998}]
\label{muq}
    \textit{Any irreducible representation of $A_{q}(n)$ is of the form $\pi_{t,w}^{(q)}:=\chi_{t}^{(q)}*\pi_{w}^{(q)}$ for some $t\in \mathbb{T}$ and a reduced word $w\in \mathcal{S}_{n+1}$. Furthermore, $\pi_{t,w}^{(q)}$'s are all mutually inequivalent.}
\end{thm}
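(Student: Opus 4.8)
The plan is to follow the strategy of Korogodski and Soibelman in four steps: construct the representations and identify the irreducible building blocks, show $\pi_{t,w}^{(q)}$ is well defined up to unitary equivalence, prove exhaustiveness, and prove mutual inequivalence. For the first step I would verify directly from the defining relations of $A_q(n)$ that each assignment $\pi_{s_r}^{(q)}$ and $\chi_t^{(q)}$ respects every relation, hence extends to a $*$-representation; the $\chi_t^{(q)}$ are one dimensional, and the operators $S\sqrt{1-q^{2N}}$, $\sqrt{1-q^{2N}}S^{*}$, $q^{N}$ and $-q^{N+1}$ occurring in $\pi_{s_r}^{(q)}$ are exactly those defining the standard infinite-dimensional irreducible of $C(SU_q(2))$, so $\pi_{s_r}^{(q)}$ is the pullback of that irreducible along the natural surjection $A_q(n)\to C(SU_q(2))$ attached to the $r$-th simple root and is therefore irreducible. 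Since $\Delta_q$ is coassociative, convolution of representations is associative, and the characters form a group under $*$ isomorphic to $\mathbb{T}$; hence $\pi_{t,w}^{(q)}=\chi_t^{(q)}*\pi_w^{(q)}$ is a $*$-representation on $\ell^{2}(\mathbb{N})^{\otimes\ell(w)}$, where $\ell(w)$ is the length of the reduced word $w$.

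For independence of the chosen reduced expression, by Matsumoto's theorem it suffices to produce unitaries implementing the braid moves of $\mathcal{S}_{n+1}$: that $\pi_{s_i}^{(q)}*\pi_{s_j}^{(q)}\cong\pi_{s_j}^{(q)}*\pi_{s_i}^{(q)}$ when $|i-j|\geq 2$ (immediate, since the corresponding rank-one quotients involve disjoint generators, so the two $SU_q(2)$ factors commute) and $\pi_{s_i}^{(q)}*\pi_{s_j}^{(q)}*\pi_{s_i}^{(q)}\cong\pi_{s_j}^{(q)}*\pi_{s_i}^{(q)}*\pi_{s_j}^{(q)}$ when $|i-j|=1$, which is a finite computation on $\ell^{2}(\mathbb{N})^{\otimes 3}$ taking place inside (a quotient by the central characters of) $C(SU_q(3))$. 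Irreducibility of $\pi_{t,w}^{(q)}$ I would obtain by showing that the vacuum vector $e_{0}\otimes\cdots\otimes e_{0}$ is cyclic, that the representation is multiplicity free over a suitable maximal commutative $C^{*}$-subalgebra built from the diagonal generators $z_{i,i}(q)$, and that the induced groupoid acts transitively on the relevant part of the joint spectrum, so that the commutant is scalar; twisting by $\chi_t^{(q)}$ is composition with an automorphism of $A_q(n)$ and preserves irreducibility.

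The main work, and the step I expect to be the chief obstacle, is exhaustiveness. I would argue by induction on $n$, the base case $n=1$ being the classical description of the spectrum of $C(SU_q(2))$. Given an irreducible $\pi$ of $A_q(n)$, restrict it to the copy of $C(SU_q(2))$ coming from the first simple root and split the Hilbert space into the part on which this subalgebra acts through a character and the part on which it acts through its infinite-dimensional irreducible. On the character part, $\pi$ factors through the surjection $A_q(n)\to A_q(n-1)$ onto the Levi-type quantum subgroup, to which the inductive hypothesis applies; on the other part, an intertwiner built from $\pi_{s_1}^{(q)}$ and $\Delta_q$ peels off one tensor factor $\ell^{2}(\mathbb{N})$ and leaves a representation of strictly smaller complexity, again handled by induction. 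Iterating and recording which simple roots are peeled off reconstructs a reduced word $w$ and a torus parameter $t$ with $\pi\cong\pi_{t,w}^{(q)}$. Making this precise --- that the residual representation really is a genuine convolution factor and that the accumulated word stays reduced --- is where the strong exchange and deletion conditions for $\mathcal{S}_{n+1}$ and a careful analysis of the joint spectrum of the first-row generators $z_{1,1}(q),\dots,z_{1,n+1}(q)$ enter, and this bookkeeping is the delicate part.

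Finally, mutual inequivalence follows by extracting invariants. The length $\ell(w)$ is recovered as the size of $\pi_{t,w}^{(q)}$: it has a nonzero finite-dimensional quotient exactly when $w=e$, and more generally the structure of its primitive ideal pins $\ell(w)$ down; for fixed length, the particular $w$ is determined by which $\pi_{s_r}^{(q)}$-type factors occur, readable off the joint spectrum of the commutative subalgebra above, and $t$ is recovered from the action on $e_{0}\otimes\cdots\otimes e_{0}$. Hence no two of the $\pi_{t,w}^{(q)}$ are unitarily equivalent, which together with the previous steps proves the theorem.
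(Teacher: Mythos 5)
This statement is not proved in the paper at all: it is Soibelman's classification theorem, imported verbatim from \cite{KorSoi-1998}, so the only fair comparison is with the classical argument your outline is trying to reconstruct. Measured against that, your proposal has genuine gaps at the two places where the real work lies. First, exhaustiveness. Your induction starts by ``restricting $\pi$ to the copy of $C(SU_q(2))$ coming from the first simple root,'' but no such copy exists inside $A_q(n)$: the rank-one algebras enter only as \emph{quotients} of $A_q(n)$ (which is how $\pi_{s_r}^{(q)}$ is defined, by pulling back along the quotient map), and the four elements $z_{1,1}(q),z_{1,2}(q),z_{2,1}(q),z_{2,2}(q)$ of $A_q(n)$ do not satisfy the $SU_q(2)$ relations, so there is no subalgebra to restrict to and no canonical splitting of $\mathcal{H}$ of the kind you describe. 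The classical proof instead proceeds by a direct spectral analysis of the operators $\pi(z_{i,j}(q))$ (highest-weight-type vectors, weights of the diagonal generators), and the ``bookkeeping'' you defer --- that peeling off tensor factors really produces a convolution factorization and that the recorded word is reduced --- is essentially the entire content of the theorem; acknowledging it as ``the delicate part'' without an argument leaves the main claim unproved.

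Second, independence of the reduced expression. You assert that the braid equivalence $\pi_{s_i}^{(q)}*\pi_{s_{i+1}}^{(q)}*\pi_{s_i}^{(q)}\cong\pi_{s_{i+1}}^{(q)}*\pi_{s_i}^{(q)}*\pi_{s_{i+1}}^{(q)}$ is ``a finite computation on $\ell^{2}(\mathbb{N})^{\otimes 3}$,'' but these are infinite-dimensional representations and unitary equivalence is not something one checks by finite computation: one must either exhibit an intertwiner explicitly, or prove both representations irreducible with equal kernels and invoke the type I property, or use uniqueness of a distinguished cyclic (highest weight) vector as Soibelman does. Note that the paper's own $q=0$ proof of this step (Proposition \ref{factor}) uses an automorphism trick that, as the remark following it points out, does \emph{not} work for $q\neq 0$, so this point cannot be waved through. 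Similarly, your irreducibility argument (``the induced groupoid acts transitively \ldots so the commutant is scalar'') is a statement of intent rather than a proof. The overall architecture --- build $\pi_{s_r}^{(q)}$ from the $SU_q(2)$ quotients, convolve along a reduced word, twist by characters, then prove well-definedness, irreducibility, exhaustiveness and inequivalence --- is indeed the Korogodski--Soibelman strategy, but as written the proposal establishes only the easy construction step.
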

The crystallization $A_{0}(n)$ of the $q$-family $C^{*}$-algebras $A_{q}(n)$' is defined in \cite{GirPal-2023} as a universal $C^{*}$-algebra with a finite set of generators $\{z_{k,j}(0):1\leq k,j \leq n+1\}$ satisfying certain relations. One of the main advantages of this definition is that one can immediately conclude the following are representations of $A_{0}(n)$ given by the formula,
\begin{align*}
\label{1}
    \pi^{(0)}_{t,w}(z_{i,j}(0)):=\left\{\begin{matrix}\lim_{q\rightarrow 0^+} \pi^{(q)}_{{t,w}}(z_{i,j}(q))&\text{ if } i\geq j\\\lim_{q\rightarrow 0^+} (-q)^{i-j}~\pi^{(q)}_{{t,w}}(z_{i,j}(q))&\text{ if } i< j\end{matrix}\right.
\end{align*}
for $t\in\mathbb{T}$ and any reduced word $w\in \mathcal{S}_{n+1}$ in the above form. In particular, for $s_{r}\in \mathcal{S}_{n+1}$ we have
\begin{align*}
    \pi^{(0)}_{s_{r}}(z_{i,j}(0)):= \left\{ 
    \begin{matrix}
        S & i=j=r \\
        S^{*} & i=j=r+1\\
        P_{0} & i=r, j=r+1\\
        P_{0} & i=r+1 , j=r\\
        \delta_{i,j}I & \text{otherwise}
        \end{matrix}\right.
\end{align*} 
where $P_{0}=I-S^{*}S$ is the orthogonal projection onto the subspace of $l^{2}(\mathbb{N})$ generated by $e_{0}$.
\begin{rem}

Recall that even though the elements $z_{i,j}(q)$ belong to different $C^*$-algebras $A_{q}(n)$ for different values of $q$, they are
images of the generating elements $z_{i,j}(t)$ of the space $\mathcal{O}_{t}^{A}(SU(n+1))$ under the specialization maps $\theta_{q}$ (see Section 2, \cite{GirPal-2023} for details; there $z_{i,j}(q)$ and $z_{i,j}(t)$ are denoted by $u_{i,j}(q)$ and $u_{i,j}(t)$ respectively).
\end{rem}
\begin{thm}[\cite{GirPal-2024}]
\begin{enumerate}
\item \textit{$\pi^{(0)}_{t,w}$'s are all mutually inequivalent irreducible representations of the crystallized algebra $A_{0}(n)$.}
\item \textit{$A_{0}(n)$ is a separable $C^{*}$-algebra of type I i.e. the image of any irreducible representation on a Hilbert space $\mathcal{H}$ contains $K(\mathcal{H})$.} 
\end{enumerate}
\label{mu0}
\end{thm}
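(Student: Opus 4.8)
The plan is to prove the two assertions separately: irreducibility and mutual inequivalence of the $\pi^{(0)}_{t,w}$ for part (1), and separability together with the type~I property for part (2). Throughout, $w$ is taken in the prescribed reduced form $w=s_{[a_{k},b_{k}]}\cdots s_{[a_{1},b_{1}]}$; independence of that choice is a separate matter, dealt with later in Section~2.

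For the irreducibility half of part~(1) I would induct on the length $\ell(w)$. In the base case $w=s_{r}$, the operators $\pi^{(0)}_{s_{r}}(z_{r,r}(0))=S$ and $\pi^{(0)}_{s_{r}}(z_{r+1,r+1}(0))=S^{*}$ generate the Toeplitz algebra $\mathcal{T}\subset B(l^{2}(\mathbb{N}))$, which acts irreducibly on $l^{2}(\mathbb{N})$ and contains $K(l^{2}(\mathbb{N}))$ (note $P_{0}=I-S^{*}S\in\mathcal{T}$); convolving with the one-dimensional character $\chi^{(0)}_{t}$ only multiplies the generators by unimodular scalars, so $\pi^{(0)}_{t,s_{r}}$ is again irreducible with image containing the compacts. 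For the inductive step, write $w=w's_{r}$ with $\ell(w')=\ell(w)-1$ and $w'$ still in the prescribed form, so that $\pi^{(0)}_{t,w}=\big(\chi^{(0)}_{t}\otimes\pi^{(0)}_{w'}\otimes\pi^{(0)}_{s_{r}}\big)\circ(\mathrm{id}\otimes\Delta_{0})\circ\Delta_{0}$ acts on $H_{w}=H_{w'}\otimes l^{2}(\mathbb{N})$. Expanding each matrix entry by the coproduct rule $\Delta_{0}(z_{i,j}(0))=\sum_{m}z_{i,m}(0)\otimes z_{m,j}(0)$ together with the scaling convention $(-q)^{i-j}$ for $i<j$, one checks that as $q\to 0^{+}$ only the ``triangular'' terms survive, giving $\pi^{(0)}_{t,w}$ an explicit lower/upper-triangular form on $H_{w'}\otimes l^{2}(\mathbb{N})$. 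From this description I would deduce that $\pi^{(0)}_{t,w}(A_{0}(n))$ contains $\pi^{(0)}_{t,w'}(A_{0}(n))\otimes P_{0}$ together with enough shift-type operators to translate $P_{0}$ along the second factor, hence contains $K(H_{w'})\otimes K(l^{2}(\mathbb{N}))=K(H_{w})$; irreducibility follows, and this simultaneously supplies the ``image contains the compacts'' part of (2) for these representations. For mutual inequivalence I would recover the two parameters: the character $t$ is read off from the induced character on the commutative quotient of $\pi^{(0)}_{t,w}(A_{0}(n))$ modulo its commutator ideal (equivalently, from composition with the evaluation map $A_{0}(n)\to C(\mathbb{T}^{n})$ assembled from the $\chi^{(0)}_{s}$), on which convolution by $\chi^{(0)}_{t}$ acts by translation; and $w$ is separated by an invariant that is manifestly constant on equivalence classes, namely $\dim H_{w}=2^{\ell(w)}$ together with, for equal lengths, the set of indices $i$ for which $\pi^{(0)}_{t,w}(z_{i,i}(0))$ is a non-unitary isometry (or, more robustly, the joint spectrum of the diagonal generators), which encodes the Bruhat cell of $w$ exactly as in the $q>0$ classification of Theorem~\ref{muq}.

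For part~(2), separability is immediate: $A_{0}(n)$ is a quotient of the universal unital $C^{*}$-algebra on the finitely many generators $z_{k,j}(0)$ subject to norm-bounding relations, and such an algebra is separable. For the type~I property I would build a finite composition series $0=I_{0}\subset I_{1}\subset\cdots\subset I_{M}=A_{0}(n)$ by closed two-sided ideals whose successive quotients $I_{m}/I_{m-1}$ are each isomorphic to $C(\mathbb{T}^{n})\otimes K(H_{m})$, mirroring Soibelman's stratification of $\widehat{A_{q}(n)}$ into Bruhat cells $\{w\}\times\mathbb{T}^{n}$ in the $q>0$ case (cf.\ \cite{KorSoi-1998,GirPal-2023}); concretely, $I_{m}$ is generated by the kernels of the representations attached to the strata of Bruhat length $<m$, and the subquotient is identified using the triangular formulas from the inductive step above. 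Granting such a series, $A_{0}(n)$ is GCR, hence type~I, every irreducible representation factors through a unique subquotient and is therefore unitarily equivalent to some $\pi^{(0)}_{t,w}$ with image containing $K(H_{w})$, and part~(1) (including the fact that the listed representations are \emph{all} of them) falls out as a byproduct.

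The main obstacle I anticipate is the precise control of the $q\to 0^{+}$ degeneration needed to make the composition series work: one has to show that in the limit the representations $\pi^{(0)}_{t,w}$ neither degenerate (each stays irreducible on the full $H_{w}$) nor merge (distinct strata remain separated, so the ideals $I_{m}$ are proper and distinct) and that these ideals are generated as claimed. In effect this amounts to transporting Soibelman's structure theory for $A_{q}(n)$ across the crystal limit, and the explicit triangular formulas obtained in the inductive step of part~(1) are exactly what is required to do the bookkeeping.
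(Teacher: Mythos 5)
This theorem is imported verbatim from \cite{GirPal-2024}: the present paper states it with a citation and gives no proof, so your proposal can only be compared with the argument in that reference, which it resembles in outline (a Soibelman-style induction on reduced words plus a stratification of the spectrum into Bruhat cells) but not in the steps that actually carry the weight.

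The concrete gap is in mutual inequivalence for distinct $w$. Your first invariant, $\dim H_{w}=2^{\ell(w)}$, is false: $H_{w}=l^{2}(\mathbb{N})^{\otimes \ell(w)}$ is a separable infinite-dimensional Hilbert space for every $w\neq e$, and the paper itself identifies all of these with $l^{2}(\mathbb{N})$ via the unitaries $V_{w}$, so dimension separates nothing beyond the one-dimensional characters. Your fallback invariant (which diagonal generators are non-unitary isometries, or their joint spectrum) fails at the first nontrivial test: for $n=2$ one computes $\pi^{(0)}_{s_{1}s_{2}}(z_{1,1})=S\otimes I$, $\pi^{(0)}_{s_{1}s_{2}}(z_{2,2})=S^{*}\otimes S$, $\pi^{(0)}_{s_{1}s_{2}}(z_{3,3})=I\otimes S^{*}$, while $\pi^{(0)}_{s_{2}s_{1}}$ sends the same generators to $I\otimes S$, $S\otimes S^{*}$, $S^{*}\otimes I$; the flip unitary conjugates the first triple onto the second simultaneously, so no invariant of the diagonal generators alone can tell these two length-two words apart. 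They are distinguished only by off-diagonal generators (e.g.\ $\pi^{(0)}_{s_{1}s_{2}}(z_{1,2})=P_{0}\otimes S$ versus $\pi^{(0)}_{s_{2}s_{1}}(z_{1,2})=I\otimes P_{0}$), and an argument must engage with those. Moreover, your appeal to ``exactly as in the $q>0$ classification'' begs the question: the $q>0$ separation of Bruhat cells rests on the Neshveyev--Tuset kernel-ordering theorem, and this paper explicitly flags (after Theorem \ref{Bfactor}) that its $q=0$ analogue is not known. The same issue undermines part (2): the composition series argument requires knowing that distinct strata do not merge at $q=0$ and that the ideals $I_{m}$ are proper and generated as claimed, which is precisely the content you defer to ``bookkeeping.'' The irreducibility induction and the separability observation are reasonable as far as they go, but the inductive step (that the image contains $K(H_{w'})\otimes K(l^{2}(\mathbb{N}))$) is asserted rather than verified and is itself a nontrivial computation with the truncated coproduct.
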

\noindent Altogether we have the following important and desirable property of the crystallized algebra.
\begin{cor}[\cite{GirPal-2024}]
\label{irr}
 %%---------------------------
 % \label{th:crystl-property}
 %%---------------------------
 \textit{Denote by 
$z_{k,j}^{}(q)$ the generators
of $A_{q}(n)$ for $q\in[0,1)$.
Let $\pi^{(0)}$ be an irreducible representation of $A_{0}(n)$
on a seperable Hilbert space $\mathcal{H}$. Then there exists irreducible
representations $\pi^{(q)}$ of $A_{q}$ 
on the same Hilbert space $\mathcal{H}$ such that
}
%%---------------------------
\begin{IEEEeqnarray*}{rCl}
\pi^{(0)}(z_{k,j}^{}(0))=
 \lim_{q\to 0+}
 \pi^{(q)}\left((-q)^{\min\{k-j,0\}}z_{k,j}^{}(q)\right),\qquad
  k,j\in\{1,2,\ldots,n+1\}.
\end{IEEEeqnarray*}
%%---------------------------
\end{cor}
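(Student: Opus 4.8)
The plan is to reduce the statement for an arbitrary irreducible representation to the classified ones from Theorem \ref{mu0} together with the limit formula already recorded for $\pi^{(0)}_{t,w}$. First I would invoke Theorem \ref{mu0}(1): since $\{\pi^{(0)}_{t,w}\}$ is a complete list of the mutually inequivalent irreducibles of $A_{0}(n)$, the given $\pi^{(0)}$ is unitarily equivalent to some $\pi^{(0)}_{t,w}$ via a unitary $U:\mathcal{H}_{t,w}\to\mathcal{H}$, where $\mathcal{H}_{t,w}$ is the model space on which $\pi^{(0)}_{t,w}$ acts (a tensor product of copies of $\ell^2(\mathbb{N})$). The displayed formula in the excerpt shows that on $\mathcal{H}_{t,w}$ we have $\pi^{(0)}_{t,w}(z_{k,j}(0)) = \lim_{q\to 0^+} \pi^{(q)}_{t,w}\big((-q)^{\min\{k-j,0\}} z_{k,j}(q)\big)$, with the convergence being in operator norm; this last point needs a brief justification, namely that each of the finitely many generators converges in norm, which follows because the building blocks $S\sqrt{1-q^{2N}}$, $\sqrt{1-q^{2N}}S^*$, $q^{N+1}$, $q^N$ converge in norm to $S$, $S^*$, $0$, $P_0$ respectively, and convolution products of representations depend norm-continuously on the factors (finite sums and products of bounded operators).

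Next I would transport everything through $U$: define $\pi^{(q)} := U\,\pi^{(q)}_{t,w}\,U^* $ on $\mathcal{H}$. Each $\pi^{(q)}$ is irreducible (conjugation by a unitary preserves irreducibility) and is a genuine representation of $A_q(n)$ for $q\in(0,1)$ by Theorem \ref{muq}. Conjugation by the fixed unitary $U$ is norm-continuous, so for every generator
\begin{IEEEeqnarray*}{rCl}
\lim_{q\to 0^+}\pi^{(q)}\big((-q)^{\min\{k-j,0\}}z_{k,j}(q)\big)
&=& U\Big(\lim_{q\to 0^+}\pi^{(q)}_{t,w}\big((-q)^{\min\{k-j,0\}}z_{k,j}(q)\big)\Big)U^*\\
&=& U\,\pi^{(0)}_{t,w}(z_{k,j}(0))\,U^* \;=\; \pi^{(0)}(z_{k,j}(0)),
\end{IEEEeqnarray*}
which is exactly the asserted identity. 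Since there are only $(n+1)^2$ generators, no uniformity issue arises: finitely many norm limits suffice.

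The only genuine point requiring care — and the step I expect to be the main (minor) obstacle — is the norm convergence of $\pi^{(q)}_{t,w}\big((-q)^{\min\{k-j,0\}}z_{k,j}(q)\big)$ to $\pi^{(0)}_{t,w}(z_{k,j}(0))$, since the excerpt only asserts this as a defining formula and one should confirm it holds in the operator-norm topology rather than merely strongly. I would handle this by writing $\pi^{(q)}_{t,w} = \chi^{(q)}_t * \pi^{(q)}_{s_{[a_k,b_k]}} * \cdots * \pi^{(q)}_{s_{[a_1,b_1]}}$, observing that $\chi^{(q)}_t$ depends norm-continuously on $q$ (scalars) and each $\pi^{(q)}_{s_r}$ applied to a generator is one of the four explicit operators above, each converging in norm as $q\to 0^+$; then the comultiplication unravels $\pi^{(q)}_{t,w}(z_{k,j}(q))$ into a finite sum of finite products of such operators (with the scaling factor $(-q)^{\min\{k-j,0\}}$ absorbed), and a finite sum of finite products of norm-convergent bounded nets is norm-convergent. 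This matches the scaling bookkeeping already built into the definition of $\pi^{(0)}_{t,w}$, so the limit is precisely $\pi^{(0)}_{t,w}(z_{k,j}(0))$. With that in hand the corollary follows immediately from Theorem \ref{mu0}(1) and Theorem \ref{muq}.
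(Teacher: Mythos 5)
Your proposal is correct and follows essentially the same route the paper intends: Corollary \ref{irr} is obtained by combining the classification of irreducibles of $A_{0}(n)$ (Theorem \ref{mu0}, every irreducible is unitarily equivalent to some $\pi^{(0)}_{t,w}$) with the fact that $\pi^{(0)}_{t,w}$ is \emph{defined} as the norm limit of the scaled $\pi^{(q)}_{t,w}$, and then transporting everything by the intertwining unitary. Your identification of the only delicate point --- that the convergence of the scaled generators under the convolution product is in operator norm, which works because $S\sqrt{1-q^{2N}}\to S$, $\sqrt{1-q^{2N}}S^{*}\to S^{*}$, $-q^{N+1}\to 0$, $q^{N}\to P_{0}$ in norm and the out-of-range terms in $\Delta(z_{i,j})$ acquire extra vanishing powers of $q$ --- is exactly the content the paper imports from \cite{GirPal-2024}.
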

%%----------------
\vskip 0.1cm
\noindent Let us recall that $A_{0}(n)$ can be equipped with natural coalgebra structure with the comultiplication map $\Delta:A_{0}(n)\to A_{0}(n)\otimes A_{0}(n)$ given by the formula
\begin{align*}
    \Delta(z_{i,j}(0))=\sum_{k=min\{i,j\}}^{max\{i,j\}} z_{i,k}(0)\otimes z_{k,j}(0)
\end{align*}
and using this formula for $\Delta$ one can then define the convolution of two representations of $A_{0}(n)$ as in the case for $q\neq 0$. Therefore, it makes sense to talk about the representation $\pi^{(0)}_{w}:= \pi^{(0)}_{s_{i_{1}}}*\pi^{(0)}_{s_{i_{2}}}*\cdots*\pi^{(0)}_{s_{i_{n}}}$ for any reduced word $w=s_{i_{1}}s_{i_{2}}\cdots s_{i_{n}}$. So, given $w\in \mathcal{S}_{n+1}$, there can be multiple reduced-form choices for $w$ and hence multiple representations each of which is defined for a specific choice of reduced form for $w$. Although the authors have described all the irreducible representations of $A_{0}(n)$ in $\cite{GirPal-2024}$ concerning a specific choice of reduced form, they did not prove that the definition of these representations is independent of the choice of a reduced form taken for a permutation $w\in \mathcal{S}_{n+1}$. To prove this, we now make use of the Coxeter structure of $\mathcal{S}_{n+1}$, in particular the Bruhat ordering $\leq $ of $\mathcal{S}_{n+1}$. For further details on the Bruhat order, we refer to \cite{BjoBre-2004}. In the proof of the next proposition, we will write $\pi^{(0)}_{n,w}$ for the representation $\pi^{(0)}_{w}$ of $A_{0}(n)$ to avoid notational confusion, and these notations will not be used elsewhere.
\begin{prop}
\label{factor}
   \textit{ For any two reduced expressions of $w_{1}$ and $w_{2}$ of a permutation $w$, $\pi^{(0)}_{w_{1}}$ and $\pi^{(0)}_{w_{2}}$ are unitarily equivalent. In particular, the collection of irreducible representations of $A_{0}(n)$ is indexed by the elements of $\mathbb{T}\times \mathcal{S}_{n+1}$.}
\end{prop}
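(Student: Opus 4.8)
The plan is to induct on the Coxeter length $\ell(w)$ and to use the Bruhat--order structure of $\mathcal{S}_{n+1}$ to cut the statement down to the two ``rank two'' identities coming from the Coxeter relations. Fix $w$ and two reduced words $w_{1}=s_{a}u_{1}$, $w_{2}=s_{b}u_{2}$, where $u_{1},u_{2}$ are reduced words for $s_{a}w$ and $s_{b}w$ respectively; thus $a$ and $b$ both lie in the left descent set of $w$. If $a=b$, then $u_{1},u_{2}$ are reduced words for the strictly shorter element $s_{a}w$, hence $\pi^{(0)}_{u_{1}}\cong\pi^{(0)}_{u_{2}}$ by the inductive hypothesis, and convolving on the left by $\pi^{(0)}_{s_{a}}$ (an operation that visibly preserves unitary equivalence, since $\rho\mapsto(\pi^{(0)}_{s_{a}}\otimes\rho)\circ\Delta$ does) gives $\pi^{(0)}_{w_{1}}\cong\pi^{(0)}_{w_{2}}$. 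If $a\neq b$, let $m_{ab}\in\{2,3\}$ be the order of $s_{a}s_{b}$, and let $\delta$ be the longest element of the finite parabolic subgroup $\langle s_{a},s_{b}\rangle$, which has length $m_{ab}$. Since $\{a,b\}$ lies in the left descent set of $w$, the standard fact that such a set divides $w$ on the left with additive length (see \cite{BjoBre-2004}) gives $w=\delta w''$ with $\ell(w)=m_{ab}+\ell(w'')$. Let $D_{1}=s_{a}s_{b}s_{a}\cdots$ and $D_{2}=s_{b}s_{a}s_{b}\cdots$ (each with $m_{ab}$ letters) be the two reduced words for $\delta$, write $D_{1}=s_{a}D_{1}'$, and fix a reduced word $W$ for $w''$. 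Then $D_{1}W$ and $D_{2}W$ are reduced words for $w$, and $D_{1}'W$ is a reduced word for the shorter element $s_{a}w$, so the inductive hypothesis gives $\pi^{(0)}_{u_{1}}\cong\pi^{(0)}_{D_{1}'W}$ and hence $\pi^{(0)}_{w_{1}}\cong\pi^{(0)}_{D_{1}W}=\pi^{(0)}_{D_{1}}*\pi^{(0)}_{W}$, and similarly $\pi^{(0)}_{w_{2}}\cong\pi^{(0)}_{D_{2}}*\pi^{(0)}_{W}$. So everything comes down to the single identity $\pi^{(0)}_{D_{1}}\cong\pi^{(0)}_{D_{2}}$, i.e.\ to
\[
\pi^{(0)}_{s_{a}}*\pi^{(0)}_{s_{b}}\cong\pi^{(0)}_{s_{b}}*\pi^{(0)}_{s_{a}}\ \ (|a-b|\geq 2),\qquad\qquad \pi^{(0)}_{s_{a}}*\pi^{(0)}_{s_{a+1}}*\pi^{(0)}_{s_{a}}\cong\pi^{(0)}_{s_{a+1}}*\pi^{(0)}_{s_{a}}*\pi^{(0)}_{s_{a+1}}.
\]

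For the commutation identity I would show that the flip unitary $\Sigma$ on $l^{2}(\mathbb{N})\otimes l^{2}(\mathbb{N})$, $\Sigma(\xi\otimes\eta)=\eta\otimes\xi$, intertwines the two sides. Since $\pi^{(0)}_{s_{r}}(z_{i,j}(0))=\delta_{i,j}I$ whenever $\{i,j\}\not\subseteq\{r,r+1\}$ and $\{a,a+1\}\cap\{b,b+1\}=\emptyset$, a short case analysis on $\Delta(z_{i,j}(0))=\sum_{k}z_{i,k}(0)\otimes z_{k,j}(0)$ shows that $(\pi^{(0)}_{s_{a}}\otimes\pi^{(0)}_{s_{b}})(\Delta z_{i,j}(0))$ is always of the form $A\otimes I$, $I\otimes B$ or $0$, with the two legs carrying the ``$a$-block'' and ``$b$-block'' data respectively; conjugating by $\Sigma$ interchanges the legs and yields exactly $(\pi^{(0)}_{s_{b}}\otimes\pi^{(0)}_{s_{a}})(\Delta z_{i,j}(0))$. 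This is a finite, routine verification.

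The braid identity is the main obstacle. Both sides involve only the generators $z_{i,j}(0)$ with $i,j\in\{a,a+1,a+2\}$ and are built from $S$, $S^{*}$, $P_{0}$, so the content is the genuinely rank-two statement for the longest element $v=s_{a}s_{a+1}s_{a}$ of $\mathcal{S}_{\{a,a+1,a+2\}}$. The word $s_{a}s_{a+1}s_{a}$ is precisely the special reduced form attached to $v$ by the recipe described above (namely $s_{[a,a]}s_{[a,a+1]}$), so the left-hand side is the irreducible representation $\pi^{(0)}_{e,v}$ of \cite{GirPal-2024}. To handle the right-hand side, I would first prove that $\pi^{(0)}_{s_{a+1}}*\pi^{(0)}_{s_{a}}*\pi^{(0)}_{s_{a+1}}$ is irreducible: computing $(\pi^{(0)}_{s_{a+1}}\otimes\pi^{(0)}_{s_{a}}\otimes\pi^{(0)}_{s_{a+1}})\circ(\Delta\otimes\mathrm{id})\circ\Delta$ on the generators is a finite calculation, and, using $P_{0}=I-S^{*}S$, one checks that the image contains enough rank-one operators to generate $K(l^{2}(\mathbb{N})^{\otimes 3})$. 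Granting irreducibility, the classification of $\widehat{A_{0}(n)}$ in \cite{GirPal-2024} forces $\pi^{(0)}_{s_{a+1}}*\pi^{(0)}_{s_{a}}*\pi^{(0)}_{s_{a+1}}\cong\pi^{(0)}_{t,u}$ for a unique pair $(t,u)$; evaluating on the diagonal generators $z_{i,i}(0)$, which carry no scalar twist in a convolution of the $\pi^{(0)}_{s_{r}}$, forces $t=e$, and a direct inspection of the representation---for instance via the Bruhat position of its kernel, which by \cite{GirPal-2024} pins down $u$---forces $u=v$, the unique length-three element involved. Hence the right-hand side is also $\pi^{(0)}_{e,v}$. (Alternatively one can bypass the classification entirely by writing down an explicit unitary on $l^{2}(\mathbb{N})^{\otimes 3}$ intertwining the two triple convolutions; this is self-contained but more laborious.) The step where I expect genuine work is establishing this irreducibility---equivalently, producing the intertwiner---for the non-canonical reduced word $s_{a+1}s_{a}s_{a+1}$.

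Finally, with reduced-word independence in hand, set $\pi^{(0)}_{w}:=\pi^{(0)}_{s_{i_{1}}}*\cdots*\pi^{(0)}_{s_{i_{m}}}$ for any reduced word $w=s_{i_{1}}\cdots s_{i_{m}}$ of $w\in\mathcal{S}_{n+1}$ (well defined up to unitary equivalence), and $\pi^{(0)}_{t,w}:=\chi^{(0)}_{t}*\pi^{(0)}_{w}$. Every $w$ has a reduced form of the special type described above, distinct elements receiving distinct forms, and such a form is in particular a reduced word, so $\pi^{(0)}_{t,w}$ coincides with the representation of the same name in \cite{GirPal-2024}; hence $\{\pi^{(0)}_{t,w}:t\in\mathbb{T},\,w\in\mathcal{S}_{n+1}\}$ is exactly the family classified there. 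Combining this with the mutual inequivalence in Theorem \ref{mu0}(1) and with the bijection between $\mathcal{S}_{n+1}$ and its canonical reduced forms shows that $(t,w)\mapsto[\pi^{(0)}_{t,w}]$ is a bijection from $\mathbb{T}\times\mathcal{S}_{n+1}$ onto $\widehat{A_{0}(n)}$, which is the remaining assertion.
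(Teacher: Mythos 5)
Your architecture is the same as the paper's: reduce to the two braid moves, dispose of the commuting move with the flip unitary, and handle the genuine braid relation by showing the ``reversed'' word also gives an irreducible representation and then invoking the classification of $\widehat{A_{0}(n)}$ from \cite{GirPal-2024}. Your reduction to rank two is done by an explicit induction on length via descent sets and the parabolic decomposition, where the paper simply cites Tits' word property (Theorem 3.3.1 of \cite{BjoBre-2004}) and chains braid moves, conjugating by $1\otimes U\otimes 1$ at each step; both are fine, yours is just more self-contained.

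The gap is exactly where you flag it: the irreducibility of $\pi^{(0)}_{s_{a+1}}*\pi^{(0)}_{s_{a}}*\pi^{(0)}_{s_{a+1}}$ is the entire content of the braid-move case, and ``a finite calculation showing the image contains enough rank-one operators'' is asserted, not performed. The paper avoids this computation with a one-line device you should note: the $*$-automorphism $\phi$ of $A_{0}(2)$ sending $z_{i,j}(0)\mapsto z_{4-i,4-j}^{*}(0)$ satisfies $\pi^{(0)}_{s_{1}s_{2}s_{1}}=\pi^{(0)}_{s_{2}s_{1}s_{2}}\circ\phi$, so irreducibility of the reversed word is inherited from that of the canonical one for free; the general case is pulled back to $A_{0}(2)$ through the obvious surjection $z_{k,l}\mapsto z_{k-i+1,l-i+1}$. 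A second caution: to identify the resulting class as $\pi^{(0)}_{e,v}$ you propose reading off $u$ from ``the Bruhat position of its kernel,'' but the paper explicitly remarks that the converse of Theorem \ref{Bfactor} (kernel containment implies Bruhat comparability) is \emph{not} known for $A_{0}(n)$, so you cannot lean on it. Only the forward direction is needed, though: every $\pi^{(0)}_{t,u}$ with $u$ a proper Bruhat subword of the rank-two longest element kills the element $a$ of Proposition \ref{key} (in its $n=2$ form), so exhibiting one vector on which $\pi^{(0)}_{s_{a+1}}*\pi^{(0)}_{s_{a}}*\pi^{(0)}_{s_{a+1}}(a)$ is nonzero pins down $u=v$, and the group-likeness of the diagonal generators then gives $t=e$ as you say. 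With the automorphism trick substituted for your unexecuted compactness computation, your proof closes.
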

\begin{proof}
As observed in Theorem 3.3.1 of $\cite{BjoBre-2004}$, given two reduced forms of an element of $S_{n+1}$ one can reach from one reduced form to another by performing a finite sequence of braid moves i.e. $s_{i}s_{i+1}s_{i}=s_{i+1}s_{i}s_{i+1}$ and $s_{i}s_{j}=s_{j}s_{i}$ for $|i-j|\geq 2, 1\leq i,j\leq n$. Keeping this in mind we proceed to prove the proposition first for the algebra $A_{0}(2)$. Define a $*$-automorphism $\phi$ of $A_{0}(2)$ by sending $z_{i,j}(0)\to z_{4-i,4-j}^{*}(0)$ for $1\leq i,j\leq 3$. Then one can directly verify $\pi^{(0)}_{2,s_{1}s_{2}s_{1}}= \pi^{(0)}_{2,s_{2}s_{1}s_{2}}\circ \phi $. Therefore, $\pi^{(0)}_{2,s_{2}s_{1}s_{2}}$ is irreducible and hence using Theorem $\ref{mu0}$, $\pi^{(0)}_{2,s_{2}s_{1}s_{2}}$ is equivalent to $\pi^{(0)}_{2,s_{1}s_{2}s_{1}}$. Let $U$ be an intertwining unitary between these two representations, that is $U^{*}\pi^{(0)}_{2,s_{2}s_{1}s_{2}}U=\pi^{(0)}_{2,s_{1}s_{2}s_{1}}$. The proof for $A_{0}(n)$ is done in two steps. For $1\leq i,j\leq n$ with $|i-j|\geq 2$, $\pi^{(0)}_{n,s_{i}s_{j}}\cong \pi^{(0)}_{n,s_{j}s_{i}}$ via the flip operator on $l^{2}(\mathbb{N})^{\otimes 2}$. For the other braid move i.e. $s_{i}s_{i+1}s_{i}=s_{i+1}s_{i}s_{i+1}$, observe that one can write $\pi^{(0)}_{n,s_{i}s_{i+1}s_{i}}=\pi^{(0)}_{2,s_{1}s_{2}s_{1}}\circ \phi_{1}$ and $\pi^{(0)}_{n,s_{i+1}s_{i}s_{i+1}}=\pi^{(0)}_{2,s_{2}s_{1}s_{2}}\circ \phi_{1}$ where $\phi_{1}: A_{0}(n)\to A_{0}(2)$ is a $*$-homomorphism that sends $z_{k,l}\to z_{k-i+1, l-i+1}$ if $k,l\in \{i,i+1,i+2\}$ and $\delta_{k,l}I$ on other generators. Therefore, the same unitary $U$ intertwines $\pi^{(0)}_{n,s_{i}s_{i+1}s_{i}}$ and $\pi^{(0)}_{n,s_{i+1}s_{i}s_{i+1}}$ and the rest of the proof follows easily. 
\end{proof}
\begin{rem}
 For $q\neq 0$, the above result is already known (see \cite{KorSoi-1998}), but observe that the above proof does not work for this case. So, from now on, we will freely use the notation $w$ for any reduced word irrespective of the choice of reduced form. 
\end{rem}
\section{Topology and Borel structure on $\hat{A}_{0}$}
Let $A$ be a separable $C^{*}$-algebra and $J$ be an ideal of $A$. $J$ is called primitive if $J$ is the kernel of some nonzero irreducible representation of $A$. Let $Prim(A)$ be the collection of all primitive ideals of $A$. We will work with the Jacobson topology on $Prim(A)$ whose closed sets can be characterized as follows. $S\subseteq Prim(A)$ is closed if and only if there is a subset $M$ of $A$ such that $S$ is \textit{exactly} the collection of all primitive ideals containing $M$. Let $\hat{A}$ be the collection of all equivalence classes of non-zero irreducible representations of $A$. The topology $\tau$ on $\hat{A}$ is then defined as pullback of the Jacobson topology via the canonical map from $\hat{A} \to Prim(A)$. This topology can be very pathological because it is not even Hausdorff in general (see Corollary \ref{t1}). However,  if we take $A$ to be a separable type I $C^{*}$-algebra then $(\hat{A}, \mathcal{B}_{\tau})$ is a standard Borel space (see 4.6.1, \cite{Dix-1977}) where $\mathcal{B}_{\tau}$ is the Borel sigma-algebra generated by pullback of the Jacobson topology on $\hat{A}$. Furthermore, the canonical map $\hat{A}\to Prim(A)$ becomes a homeomorphism (see 3.1.6, 9.1, \cite{Dix-1977}).\\
For any $q\in[0,1)$ we will denote the collection of equivalence classes of all one-dimensional representations and the collection of all equivalence classes of infinite-dimensional irreducible representations on $l^{2}(\mathbb{N})$ by $\hat{A}_{q,1}$ and $\hat{A}_{q,\infty}$ respectively. Let $w\neq 1$ be any reduced word. Denote $\hat{A}_{q,w}:=\{[V^{*}_{w}\pi^{(q)}_{t,w}( . )V_{w}]:\, t\in \mathbb{T}\}\subseteq \hat{A}_{q}$ where $V_{w}: l^{2}(\mathbb{N})\to l^{2}(\mathbb{N})^{\otimes l(w)}$ is a fixed unitary and $l(w)$ means the length of the reduced word. Also, since we are considering the class of irreducibles, the choice of unitary $V_{w}$ does not matter. Then using Theorems \ref{muq} and \ref{mu0} one can write $\hat{A}_{q}= \hat{A}_{q,1} \bigsqcup \hat{A}_{q,\infty} = \hat{A}_{q,1} \; \bigsqcup \; \left (\bigsqcup_{w\neq 1} \hat{A}_{q,w}\right )$. One of the goals of this section is to prove that the above decomposition is into Borel subsets. Hence, we have the following decomposition
\begin{align*}
    \int_{\hat{A}_{q}}^{\bigoplus} F \cong  \int_{\hat{A}_{q,1}}^{\bigoplus} F \; \bigoplus \; \left (\bigoplus_{w\neq 1} \int_{\hat{A}_{q,w}}^{\bigoplus}F\right ).
\end{align*}
We will use this decomposition with a suitable integrand $F$ later. Let $w\neq 1$ be a fixed reduced word in $\mathcal{S}_{n+1}$. We write $Irr_{\infty}(A_{q})$ for the collection of all non-zero infinite-dimensional irreducible representations of $A_{q}$ on $l^{2}(\mathbb{N})$ and the set $Irr^{}_{V_{w},w}(A_{q}):=\{V^{*}_{w}\pi^{(q)}_{t,w}( . )V_{w}:\, t\in \mathbb{T}\}\subseteq Irr_{\infty}(A_{q})$.
\begin{lema}
\label{polish}
\textit{
For $q\in [0,1)$ and a reduced word $w\neq 1$ we have 
\begin{enumerate}
\item $Irr_{\infty}(A_{q})$ with the topology of strong pointwise convergence is a polish space.
\item The natural map $\rho: Irr_{\infty}(A_{q})\to \hat{A}_{q,\infty}(\subset \hat{A}_{q})$ is open, continuous and onto. Hence, it is a Borel map from a standard space $Irr_{\infty}(A_{q})$ to a standard space $\hat{A}_{q}$.
\item $\rho(Irr^{}_{V_{w},w}(A_{q}))= \hat{A}_{q,w}$ and $\rho$ restricted to $Irr_{V_{w},w}(A_{q})$ is injective.
\end{enumerate}}
\end{lema}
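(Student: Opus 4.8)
The three parts are handled separately; each one reduces to the standard structure theory for the dual of a separable $C^{*}$-algebra (as in \cite{Dix-1977}) combined with the classification of irreducibles recalled above, so essentially no new computation is required. Recall that $A_{q}$ is generated by the $(n+1)^{2}$ contractions $z_{i,j}(q)$ (the defining matrix being a corepresentation). For (1): a $*$-representation of $A_{q}$ on $l^{2}(\mathbb{N})$ is determined by the tuple of operators it assigns to the generators, all lying in the closed unit ball $B$ of $\mathcal{B}(l^{2}(\mathbb{N}))$; equipping $B$ with the strong operator topology makes it a Polish space, and the tuples in $B^{(n+1)^{2}}$ that extend to a representation are exactly those satisfying the (countably many) defining relations together with $*$-compatibility, which is a closed condition since multiplication and the adjoint are jointly strongly continuous on bounded sets. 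Hence $\operatorname{Rep}(A_{q},l^{2}(\mathbb{N}))$ with the topology of pointwise strong convergence is Polish, and the irreducible representations form a $G_{\delta}$ subset of it (cf.\ \cite{Dix-1977}), hence a Polish space; since any irreducible representation on $l^{2}(\mathbb{N})$ is automatically nonzero and infinite-dimensional, this $G_{\delta}$ is precisely $Irr_{\infty}(A_{q})$.

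For (2): surjectivity onto $\hat{A}_{q,\infty}$ is immediate, since every infinite-dimensional irreducible acts on a space unitarily isomorphic to $l^{2}(\mathbb{N})$. For continuity, a basic open set of $\hat{A}_{q}$ is $O_{a}=\{[\pi]:\pi(a)\neq 0\}$ for $a\in A_{q}$, and $\rho^{-1}(O_{a})=\bigcup_{\xi\in D}\{\pi\in Irr_{\infty}(A_{q}):\|\pi(a)\xi\|>0\}$ for a fixed countable dense $D\subset l^{2}(\mathbb{N})$, which is open because $\pi\mapsto\pi(a)\xi$ is strongly continuous — first on the dense $*$-subalgebra generated by the $z_{i,j}(q)$, using joint strong continuity of multiplication on bounded sets, and then on all of $A_{q}$ by a three-$\varepsilon$ argument since all the representations are contractive. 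Openness is the point that genuinely invokes the deeper theory: the pullback topology on $\hat{A}_{q}$ is the quotient topology induced from $\operatorname{Irr}(A_{q},l^{2}(\mathbb{N}))$ via the canonical projection, which is open onto its image $\hat{A}_{q,\infty}$ (\cite{Dix-1977}; here $l^{2}(\mathbb{N})$ is large enough, as it carries every infinite-dimensional irreducible). Finally a continuous map is Borel, $Irr_{\infty}(A_{q})$ is standard by (1), and $\hat{A}_{q}$ is standard because $A_{q}$ is separable and of type I — Theorem \ref{mu0} for $q=0$, classical for $q\neq 0$ — by \cite[4.6.1]{Dix-1977}, which gives the last assertion.

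For (3): by definition $\hat{A}_{q,w}$ is exactly the set of unitary equivalence classes of the representations constituting $Irr_{V_{w},w}(A_{q})$, so $\rho(Irr_{V_{w},w}(A_{q}))=\hat{A}_{q,w}$ is a tautology. If $V_{w}^{*}\pi^{(q)}_{t_{1},w}V_{w}$ and $V_{w}^{*}\pi^{(q)}_{t_{2},w}V_{w}$ are unitarily equivalent, then conjugating by the fixed unitary $V_{w}$ gives $\pi^{(q)}_{t_{1},w}\cong\pi^{(q)}_{t_{2},w}$, whence $t_{1}=t_{2}$ by the mutual inequivalence established in Theorem \ref{muq} (for $q\in(0,1)$) and in Theorem \ref{mu0} together with Proposition \ref{factor} (for $q=0$); thus $\rho$ is injective on $Irr_{V_{w},w}(A_{q})$. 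The only step that is not bookkeeping or a direct appeal to a cited theorem is the openness of $\rho$ in part (2), so that is where I would expect to spend the most care, in particular in pinning down the precise statement on the topology of $\operatorname{Irr}(A_{q},l^{2}(\mathbb{N}))$ from \cite{Dix-1977}.
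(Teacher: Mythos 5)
Your proposal follows the same route as the paper: parts (1) and (2) are exactly the content of Dixmier 3.7.4 and 3.5.8, which is all the paper cites, and your part (3) matches the paper's one-line argument (surjectivity from $l^{2}(\mathbb{N})\cong l^{2}(\mathbb{N})^{\otimes l(w)}$, injectivity from the mutual inequivalence in Theorems \ref{muq} and \ref{mu0}). One technical assertion in your part (1) is false as stated: the adjoint is \emph{not} strongly continuous on bounded sets (e.g.\ $(S^{*})^{n}\to 0$ strongly while $S^{n}\not\to 0$ strongly for the unilateral shift), so relations involving $z_{i,j}(q)^{*}$ are not closed conditions in the plain strong topology on generator tuples. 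The fix is standard and does not change the conclusion: the topology of pointwise strong convergence of representations also controls $\pi(a^{*})$ for every $a$, so one should parametrize representations by the $*$-strong topology on the tuples (equivalently, check the relations using pointwise \emph{weak} convergence, which for uniformly bounded $*$-representations coincides with pointwise strong convergence); this is precisely how Dixmier's 3.7.1--3.7.4 proceed. With that correction your unpacking is sound and adds detail the paper omits.
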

\begin{proof}
    1 and 2 are obvious from results 3.7.4 and 3.5.8 in \cite{Dix-1977}. The equality $\rho(Irr^{}_{V_{w},w}(A_{q}))= \hat{A}_{q,w}$ follows immediately once we realise $l^{2}(\mathbb{N})$ and $l^{2}(\mathbb{N})^{\otimes l(w)}$ are isomorphic Hilbert spaces and injectivity of the restriction of $\rho$ is a consequence of Theorems \ref{muq} and \ref{mu0}.
\end{proof}
\begin{lema}
\label{lema2}
\textit{
Let $w\neq 1$ be a fixed reduced word. Then the map $f:Irr^{ }_{V_{w},w}(A_{q}) \to \mathbb{T}$ defined by $V^{*}_{w}\pi^{(q)}_{t,w}( . )V_{w}\to t$
is a homeomorphism.}
\end{lema}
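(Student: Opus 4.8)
The plan is to exhibit $f$ as a continuous bijection whose inverse is continuous, and to obtain continuity of $f$ itself essentially for free from compactness of $\mathbb{T}$.

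First I would check that $f$ is a well-defined bijection. Surjectivity is immediate from the definition of the set $Irr_{V_w,w}(A_q)$. For well-definedness, equivalently injectivity: if the operators $V_w^{*}\pi^{(q)}_{t,w}(\cdot)V_w$ and $V_w^{*}\pi^{(q)}_{t',w}(\cdot)V_w$ coincide, then, $V_w$ being a fixed unitary, $\pi^{(q)}_{t,w}$ and $\pi^{(q)}_{t',w}$ coincide as representations and are in particular unitarily equivalent; since by Theorem \ref{muq} (and by Theorem \ref{mu0}(1) in the case $q=0$) the family $\{\pi^{(q)}_{t,w}\}_{t\in\mathbb{T}}$ consists of mutually inequivalent representations, this forces $t=t'$.

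Next I would show that $g:=f^{-1}\colon\mathbb{T}\to Irr_{V_w,w}(A_q)$, $t\mapsto V_w^{*}\pi^{(q)}_{t,w}(\cdot)V_w$, is continuous for the topology of strong pointwise convergence, i.e. that $t\mapsto\pi^{(q)}_{t,w}(a)$ is strongly continuous for every $a\in A_q$ (conjugation by the fixed unitary $V_w$ does not affect this). The set of $a\in A_q$ enjoying this property is a norm-closed $*$-subalgebra of $A_q$ --- here one uses the uniform bound $\|\pi^{(q)}_{t,w}(a)\|\leq\|a\|$ together with joint strong continuity of multiplication on norm-bounded sets --- so it suffices to treat the canonical generators $z_{i,j}(q)$. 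Writing $\pi^{(q)}_{t,w}=\chi^{(q)}_t*\pi^{(q)}_w$, i.e. $\pi^{(q)}_{t,w}=(\chi^{(q)}_t\otimes\pi^{(q)}_w)\circ\Delta_q$ (with the convention $\Delta_0=\Delta$ when $q=0$), and using that the character $\chi^{(q)}_t$ is supported on the diagonal, the only surviving term of the comultiplication is the one with middle index $i$ (which always lies in the relevant summation range, including when $q=0$), so $\pi^{(q)}_{t,w}(z_{i,j}(q))=\chi^{(q)}_t(z_{i,i}(q))\,\pi^{(q)}_w(z_{i,j}(q))$. Since $t\mapsto\chi^{(q)}_t(z_{i,i}(q))$ is a continuous function of the coordinates of $t$ (a Laurent monomial), the map $t\mapsto\pi^{(q)}_{t,w}(z_{i,j}(q))$ is in fact norm continuous, and hence $g$ is continuous.

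Finally I would conclude with a standard compactness argument: the torus $\mathbb{T}\cong T^{n}$ is compact, while $Irr_{V_w,w}(A_q)$, being a subspace of the Polish (hence Hausdorff) space $Irr_\infty(A_q)$ of Lemma \ref{polish}(1), is Hausdorff; a continuous bijection from a compact space onto a Hausdorff space is a homeomorphism, so $g=f^{-1}$, and therefore $f$ itself, is a homeomorphism. The only mildly delicate step is the reduction to generators in the continuity of $g$; everything else is routine, and the real virtue of the compactness argument is that it spares us a direct proof that $f$ is continuous --- that would amount to recovering $t$ from the operators $\pi^{(q)}_{t,w}(z_{i,i}(q))$, which in turn would force one to verify that $\pi^{(q)}_w(z_{i,i}(q))\neq 0$ for enough indices $i$, a statement that is true but not entirely transparent for a general reduced word $w$.
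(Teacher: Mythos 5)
Your proof is correct, and it genuinely diverges from the paper's on one half of the statement. For bijectivity and for the continuity of $f^{-1}$ you do essentially what the paper does, but more explicitly: the paper checks strong convergence of $\pi^{(q)}_{t_{\alpha},w}(a)$ for $a$ a generator or its adjoint and then passes to general $a$, whereas you isolate the identity $\pi^{(q)}_{t,w}(z_{i,j}(q))=\chi^{(q)}_{t}(z_{i,i}(q))\,\pi^{(q)}_{w}(z_{i,j}(q))$, which upgrades the conclusion to norm continuity on generators. (One small wording point: the set of $a$ for which $t\mapsto\pi^{(q)}_{t,w}(a)$ is strongly continuous is a norm-closed subalgebra, but calling it a $*$-subalgebra is not automatic since the adjoint is not strongly continuous; this is harmless here because your continuity on generators is norm continuity, which does pass to adjoints, and the norm-closed subalgebra generated by the generators together with their adjoints is all of $A_{q}$.) The real difference is the continuity of $f$ itself: the paper proves it directly by reading off the components of $t$ from the diagonal generators, which requires knowing that $\|V_{w}^{*}\pi^{(q)}_{w}(z_{1,1}(q))V_{w}\eta\|\neq 0$ for a suitable $\eta$ --- the paper in fact asserts this for \emph{every} $\eta\neq 0$, which is too strong (already $\pi^{(q)}_{s_{1}}(z_{1,1}(q))=S\sqrt{1-q^{2N}}$ annihilates $e_{0}$), though the existence of one such $\eta$, i.e. $\pi^{(q)}_{w}(z_{i,i}(q))\neq 0$, is all that is needed. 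Your compact-to-Hausdorff argument sidesteps that non-vanishing question entirely, at the price of not producing an explicit estimate; it also delivers the compactness of $Irr_{V_{w},w}(A_{q})$ directly, which is exactly what the paper later extracts from this lemma in Proposition \ref{iso}, so there is no circularity in substituting your argument.
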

\begin{proof}
Surjectivity is obvious, and injectivity again follows from Theorems \ref{muq} and \ref{mu0}. Now we prove that $f$ is continuous. Let $\{V^{*}_{w}\pi^{(q)}_{t_{\alpha},w}( . )V_{w}\}_{\alpha}$ be a net in $Irr^{ }_{V_{w},w}(A_{q})$ that converges to $V^{*}_{w}\pi^{(q)}_{t,w}( . )V_{w}$ under the topology of strong pointwise convergence. That means for the generator $z_{1,1}(q)\in A_{q}$, we have $\left|t^{1}_{\alpha}-t^{1}\right|\left\|   V^{*}_{w}\pi^{(q)}_{w}(z_{1,1}(q))V_{w}(\eta)\right\| \to 0$ where $\eta \in l^{2}(\mathbb{N})$. Given any $\eta \neq0$, as the latter quantity is always nonzero, we must have $t_{\alpha}^{1}\to t^{1}$ in $T$. Similarly, using other diagonal generators $z_{i,i}(q)$, one can show $t_{\alpha}^{i}\to t^{i}$ for all $i=2,3\cdots n$. To prove the continuity of the inverse, let us take a net $t_{\alpha}\to t$ in $\mathbb{T}$. Following the same steps, we can conclude that $\left\|  V^{*}_{w}\pi^{(q)}_{t_{\alpha},w}(a)V_{w}(\eta)- V^{*}_{w}\pi^{(q)}_{t,w}(a)V_{w}(\eta) \right\| \to 0$, for any $a=z_{k,l}(q)$ or $z_{k,l}^{*}(q)$ and $\eta \in l^{2}(\mathbb{N})$. Hence, the above convergence is also true for any $a\in A_{q}(n)$. 
\end{proof}
\vskip 0.2cm
\begin{prop}
\label{iso}
\textit{For any reduced word $w\neq 1$, $\hat{A}_{q,w}$ is a Borel subset of $\hat{A}_{q}$ and restriction of $\rho$ on $Irr_{V_{w},w}(A_{q})$ is a Borel isomorphism between $Irr_{V_{w},w}(A_{q})$ and $\hat{A}_{q,w}$. Furthermore, $\hat{A}_{q,w}$ is Borel isomorphic to $\mathbb{T}$ for all $q\in[0,1)$.}
\end{prop}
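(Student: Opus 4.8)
The plan is to deduce everything from Lemmas \ref{polish} and \ref{lema2} together with a standard descriptive-set-theoretic fact, namely the Lusin--Souslin theorem: an injective Borel map between standard Borel spaces has Borel range and is a Borel isomorphism onto that range.

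First I would observe that $Irr_{V_w,w}(A_q)$ is a closed, hence Borel, subset of the Polish space $Irr_\infty(A_q)$ of Lemma \ref{polish}(1). Indeed, by Lemma \ref{lema2} the map $f$ is a homeomorphism of $Irr_{V_w,w}(A_q)$ — with the subspace topology of strong pointwise convergence — onto the compact space $\mathbb{T}$, and a compact subset of the Hausdorff space $Irr_\infty(A_q)$ is closed. Consequently $Irr_{V_w,w}(A_q)$ is itself a standard Borel space, its Borel structure as a subspace of $Irr_\infty(A_q)$ coinciding, again by Lemma \ref{lema2}, with the one transported from $\mathbb{T}$ through $f$.

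Next, since $A_q(n)$ is separable and of type I (for $q=0$ this is Theorem \ref{mu0}(2), and for $q\neq0$ it is well known), the spectrum $\hat{A}_q$ equipped with $\mathcal{B}_\tau$ is a standard Borel space by 4.6.1 of \cite{Dix-1977}. By Lemma \ref{polish}(2) the map $\rho$ is continuous, so its restriction to $Irr_{V_w,w}(A_q)$ is a Borel map into $\hat{A}_q$, and by Lemma \ref{polish}(3) this restriction is injective with image exactly $\hat{A}_{q,w}$. Applying the Lusin--Souslin theorem to $\rho|_{Irr_{V_w,w}(A_q)}\colon Irr_{V_w,w}(A_q)\to\hat{A}_q$ then yields at once that $\hat{A}_{q,w}$ is a Borel subset of $\hat{A}_q$ and that $\rho|_{Irr_{V_w,w}(A_q)}$ is a Borel isomorphism onto $\hat{A}_{q,w}$. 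The final assertion follows by composing this Borel isomorphism, in the inverse direction, with the homeomorphism $f$ of Lemma \ref{lema2}: the map $f\circ(\rho|_{Irr_{V_w,w}(A_q)})^{-1}\colon\hat{A}_{q,w}\to\mathbb{T}$ is then a Borel isomorphism.

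The only step that requires genuine care — the main point of the argument — is the invocation of Lusin--Souslin, which demands that $\hat{A}_q$ really be a \emph{standard} Borel space (this is precisely where separability and type I-ness, hence Theorem \ref{mu0}(2), are used) and not merely a countably separated Borel space; once that is secured, the non-Hausdorffness of the Jacobson topology on $\hat{A}_q$ causes no difficulty. Everything else is bookkeeping with the two lemmas already in hand, together with the elementary compactness observation identifying $Irr_{V_w,w}(A_q)$ as a closed, standard-Borel subspace of $Irr_\infty(A_q)$.
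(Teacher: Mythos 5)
Your argument is correct and is essentially the paper's own proof: the paper likewise deduces from Lemma \ref{lema2} that $Irr_{V_{w},w}(A_{q})$ is compact, hence closed and Borel in the Polish space $Irr_{\infty}(A_{q})$, and then invokes Corollary 15.2 of \cite{Kec-1994} (the Lusin--Souslin theorem you name explicitly) together with the standardness of $\hat{A}_{q}$ to conclude. Your write-up simply spells out the details the paper leaves implicit.
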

\begin{proof}
From Lemma $\ref{lema2}$ it follows that $Irr^{ }_{V_{w},w}(A_{q})$ is a compact subset of a Polish space $Irr_{\infty}(A_{q})$. So, $Irr^{ }_{V_{w},w}(A_{q})$ is itself closed and hence a Borel subset of $Irr_{\infty}(A_{q})$. Therefore, the rest of the proof follows immediately from Corollary 15.2 of \cite{Kec-1994} or Appendix B21 of \cite{Dix-1977}. 
\end{proof}
\noindent Combining the previous results, now we can safely write that for $q\in (0,1)$, the map $T_{q}^{-1}: \hat{A}_{q,\infty}\to \hat{A}_{0,\infty}$ defined by $[V^{*}_{w}\pi^{(q)}_{t,w}( . )V_{w}]\to [V^{*}_{w}\pi^{(0)}_{t,w}( . )V_{w}]$ is a Borel isomorphism. It further extends to a Borel isomorphism between $\hat{A}_{q}$ and $\hat{A}_{0}$ naturally. We record this observation in the next theorem. 
\begin{thm}
\label{iso1}
\textit{For any $q\in (0,1)$ the map $T_{q}: \hat{A}_{0} \to \hat{A}_{q}$ is a Borel isomorphism}.
\end{thm}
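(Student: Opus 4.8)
The strategy is to exhibit $T_{q}$ as a finite patching of Borel isomorphisms, one over each stratum of a Borel stratification of the two spectra. First I would record well-definedness: by Theorems \ref{muq} and \ref{mu0} together with Proposition \ref{factor}, both $\hat{A}_{q}$ (for $q\in(0,1)$) and $\hat{A}_{0}$ are in canonical bijection with $\mathbb{T}\times\mathcal{S}_{n+1}$, via $(t,w)\mapsto\bigl[\pi^{(q)}_{t,w}\bigr]$ and $(t,w)\mapsto\bigl[\pi^{(0)}_{t,w}\bigr]$ respectively; in particular the class $\bigl[V_{w}^{*}\pi^{(q)}_{t,w}(\cdot)V_{w}\bigr]$ depends neither on the auxiliary unitary $V_{w}$ nor on the chosen reduced expression for $w$, and under these two bijections $T_{q}$ is nothing but the identity of $\mathbb{T}\times\mathcal{S}_{n+1}$. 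So the only real content of the theorem is that these two parametrizations induce the \emph{same} Borel structure.

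Next I would set up the stratification. By Proposition \ref{iso} each $\hat{A}_{q,w}$ with $w\neq1$ is a Borel subset of $\hat{A}_{q}$; since $\mathcal{S}_{n+1}$ is \emph{finite}, $\hat{A}_{q,\infty}=\bigsqcup_{w\neq1}\hat{A}_{q,w}$ is a finite union of Borel sets, hence Borel, and therefore $\hat{A}_{q,1}=\hat{A}_{q}\setminus\hat{A}_{q,\infty}$ is Borel as well. Thus $\hat{A}_{q}=\hat{A}_{q,1}\sqcup\bigl(\bigsqcup_{w\neq1}\hat{A}_{q,w}\bigr)$ is a finite partition into Borel subsets, and likewise for $q=0$. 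Recall moreover that for $q\in(0,1)$ the algebra $A_{q}(n)$ is separable and of type~I (well known; cf.\ \cite{KorSoi-1998}), so $\hat{A}_{q}$ is a standard Borel space, as is $\hat{A}_{0}$ by Theorem \ref{mu0}. A bijection between standard Borel spaces that restricts to a bi-measurable bijection on each block of a finite Borel partition of its domain, carrying it onto the corresponding block of a finite Borel partition of the codomain, is automatically a Borel isomorphism; so it suffices to treat $T_{q}$ one block at a time.

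Over the block $\hat{A}_{0,w}$, $w\neq1$, the map $T_{q}$ factors as
\[
\hat{A}_{0,w}\ \xrightarrow{\ (\rho_{0})^{-1}\ }\ Irr_{V_{w},w}(A_{0})\ \xrightarrow{\ f_{0}\ }\ \mathbb{T}\ \xrightarrow{\ f_{q}^{-1}\ }\ Irr_{V_{w},w}(A_{q})\ \xrightarrow{\ \rho_{q}\ }\ \hat{A}_{q,w},
\]
where $\rho_{0},\rho_{q}$ are the restrictions of the map $\rho$ of Lemma \ref{polish} to the slices $Irr_{V_{w},w}$, and $f_{0},f_{q}$ are the maps of Lemma \ref{lema2} for the parameter values $0$ and $q$. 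By Proposition \ref{iso} the two outer arrows are Borel isomorphisms, and by Lemma \ref{lema2} the two inner arrows are homeomorphisms; hence $T_{q}|_{\hat{A}_{0,w}}$ is a Borel isomorphism onto $\hat{A}_{q,w}$. On the one-dimensional block the argument is trivial: the formula defining $\chi^{(q)}_{t}$ on the generators does not involve $q$, so $t\mapsto[\chi^{(q)}_{t}]$ identifies $\hat{A}_{q,1}$ with $\mathbb{T}$ (with its relative Borel structure, a Borel subset of a standard Borel space being itself standard) for every $q\in[0,1)$, and $T_{q}|_{\hat{A}_{0,1}}\colon[\chi^{(0)}_{t}]\mapsto[\chi^{(q)}_{t}]$ is the identity of $\mathbb{T}$ under these identifications. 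Patching the finitely many blocks completes the proof.

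\textbf{Where the difficulty lies.} All the analytic substance has already been absorbed into Lemmas \ref{polish} and \ref{lema2} and Proposition \ref{iso}: that $\rho$ is open, continuous and onto, that the slices $Irr_{V_{w},w}(A_{q})$ are \emph{compact} (hence closed and Borel) inside the Polish space $Irr_{\infty}(A_{q})$, and consequently that $\rho$ restricts to a Borel isomorphism $Irr_{V_{w},w}(A_{q})\cong\hat{A}_{q,w}$. Granting these, the remaining work is purely organizational: checking that the stratification is \emph{by Borel sets} (which is exactly where finiteness of $\mathcal{S}_{n+1}$ enters, so as to obtain $\hat{A}_{q,1}$ Borel as a complement) and that each stratum carries, as a subspace of $\hat{A}_{q}$, the Borel structure transported from $\mathbb{T}$ — precisely the content of Proposition \ref{iso} — so that the finitely many blockwise bi-measurable bijections glue to a global Borel isomorphism. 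I do not expect any essential obstruction beyond this bookkeeping.
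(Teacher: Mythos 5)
Your proposal is correct and takes essentially the same route as the paper: the paper's proof is precisely the observation that, by Proposition \ref{iso} and Lemma \ref{lema2}, each block $\hat{A}_{0,w}\to\hat{A}_{q,w}$ of $T_{q}$ is a Borel isomorphism (factoring through $\mathbb{T}$ via $Irr_{V_{w},w}$), extended ``naturally'' over the finite stratification by $\mathcal{S}_{n+1}$. You have merely written out the patching bookkeeping that the paper leaves implicit.
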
 
\begin{rem}
It is important to note that $\hat{A}_{0}$ and $\hat{A}_{q}$ are always Borel isomorphic, since both are standard Borel spaces with the same cardinality. But that does not imply that the map $T_{q}^{-1}:\hat{A}_{q}\to \hat{A}_{0}$ is a Borel isomorphism. We need this specific map $T_{q}$ to be our required isomorphism for the main result. Although $\hat{A}_{q}$ is Borel isomorphic to $\bigsqcup_{(n+1)!}\mathbb{T}$ for any $q\in [0,1)$, they can never be homeomorphic as pullback of the Jacobson topology on $\hat{A}_{q}$ is not Hausdorff (see Corollary \ref{t1}).
\end{rem}
\begin{thm}
\label{Bfactor}
\textit{Let $u$ and $w$ be two permutations in $\mathcal{S}_{n+1}$ with $u\leq w$. Then $\pi^{(q)}_{t,u}$ factorizes through $\pi^{(q)}_{t,w}$ for any $t\in \mathbb{T}$ and $q\in [0,1)$.}
\end{thm}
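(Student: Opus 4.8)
The plan is to combine the subword characterisation of the Bruhat order with the convolution (coalgebra) structure of the $\pi^{(q)}_{t,w}$. Write $w=s_{i_{1}}s_{i_{2}}\cdots s_{i_{\ell}}$ for a reduced word; since $u\leq w$, the subword property of the Bruhat order (see \cite{BjoBre-2004}) provides indices $1\leq j_{1}<j_{2}<\cdots<j_{k}\leq\ell$ such that $s_{i_{j_{1}}}s_{i_{j_{2}}}\cdots s_{i_{j_{k}}}$ is a reduced expression for $u$. I would then isolate two facts, valid for every $q\in[0,1)$. First, \emph{(A)}: the one-dimensional representation $\epsilon:=\chi^{(q)}_{1}$ (all diagonal generators $\mapsto 1$, all off-diagonal generators $\mapsto 0$) is a counit for $(A_{q}(n),\Delta_{q})$, hence a two-sided unit for the convolution product: $\epsilon*\pi\cong\pi\cong\pi*\epsilon$ for every representation $\pi$. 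Second, \emph{(B)}: $\epsilon$ factorizes through $\pi^{(q)}_{s_{r}}$ for every Coxeter generator $s_{r}$, i.e.\ $\ker\pi^{(q)}_{s_{r}}\subseteq\ker\epsilon$.

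Granting \emph{(A)} and \emph{(B)}, the argument runs as follows. Using associativity of convolution and independence of the reduced word (Proposition \ref{factor} for $q=0$, and \cite{KorSoi-1998} for $q\neq0$) we may write
\[
\pi^{(q)}_{t,w}=\bigl(\chi^{(q)}_{t}\otimes\pi^{(q)}_{s_{i_{1}}}\otimes\cdots\otimes\pi^{(q)}_{s_{i_{\ell}}}\bigr)\circ\Delta^{(\ell)}_{q},
\]
where $\Delta^{(\ell)}_{q}\colon A_{q}(n)\to A_{q}(n)^{\otimes(\ell+1)}$ is the iterated comultiplication. Let $\rho$ be obtained from the right-hand side by replacing, for every index $a\notin\{j_{1},\dots,j_{k}\}$, the factor $\pi^{(q)}_{s_{i_{a}}}$ by $\epsilon$. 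Since each replaced factor satisfies $\ker\pi^{(q)}_{s_{i_{a}}}\subseteq\ker\epsilon$ by \emph{(B)}, and since $\ker(\varphi_{0}\otimes\cdots\otimes\varphi_{\ell})\subseteq\ker(\psi_{0}\otimes\cdots\otimes\psi_{\ell})$ whenever $\ker\varphi_{a}\subseteq\ker\psi_{a}$ for all $a$, composing with $\Delta^{(\ell)}_{q}$ gives $\ker\pi^{(q)}_{t,w}\subseteq\ker\rho$. On the other hand, by coassociativity and the counit identity \emph{(A)}, the inserted $\epsilon$'s can be removed from the convolution one at a time, so $\rho=\chi^{(q)}_{t}*\pi^{(q)}_{s_{i_{j_{1}}}}*\cdots*\pi^{(q)}_{s_{i_{j_{k}}}}$, which is $\pi^{(q)}_{t,u}$ computed from the reduced expression $s_{i_{j_{1}}}\cdots s_{i_{j_{k}}}$ of $u$; this is unitarily equivalent to the canonical $\pi^{(q)}_{t,u}$ and hence has the same kernel. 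Therefore $\ker\pi^{(q)}_{t,w}\subseteq\ker\pi^{(q)}_{t,u}$, as claimed, and transitivity of kernel inclusion makes this insensitive to how $u\leq w$ is witnessed. As for \emph{(A)}: for $q\neq0$ this is the usual counit of $C(SU_{q}(n+1))$; for $q=0$ the formula $\Delta(z_{i,j}(0))=\sum_{k=\min\{i,j\}}^{\max\{i,j\}}z_{i,k}(0)\otimes z_{k,j}(0)$ gives $(\epsilon\otimes\mathrm{id})\Delta(z_{i,j}(0))=z_{i,j}(0)=(\mathrm{id}\otimes\epsilon)\Delta(z_{i,j}(0))$ since only the term $k=i$ (resp.\ $k=j$) survives, and $\epsilon=\chi^{(0)}_{1}$ is a representation of $A_{0}(n)$ by Theorem \ref{mu0}.

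The substantive step, and the one I expect to be the main obstacle, is \emph{(B)}. The point is that $\pi^{(q)}_{s_{r}}(A_{q}(n))$ is the $C^{*}$-subalgebra of $B(l^{2}(\mathbb{N}))$ generated by $I$ together with the $(r,r{+}1)$-block operators: by $S\sqrt{1-q^{2N}}$, $\sqrt{1-q^{2N}}S^{*}$, $-q^{N+1}$, $q^{N}$ when $q\neq0$, and by $S$, $S^{*}$, $P_{0}$ when $q=0$, all other generators being sent to $\delta_{i,j}I$. Modulo the compacts $K(l^{2}(\mathbb{N}))$ the operators $q^{N}$, $q^{N+1}$, $P_{0}$ and $\sqrt{1-q^{2N}}-I$ all vanish, so the Calkin quotient map carries $\pi^{(q)}_{s_{r}}(A_{q}(n))$ onto the singly generated $C^{*}$-algebra $C^{*}([S])$; since $[S]$ is unitary in the Calkin algebra with $\sigma([S])=\sigma_{\mathrm{ess}}(S)=\mathbb{T}$, one has $C^{*}([S])\cong C(\mathbb{T})$, under which $z_{r,r}(q)\mapsto z$, $z_{r+1,r+1}(q)\mapsto\bar z$, every other diagonal generator $\mapsto 1$, and every other off-diagonal generator $\mapsto 0$. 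Composing with evaluation at $1\in\mathbb{T}$ produces a character of $A_{q}(n)$ that agrees with $\epsilon=\chi^{(q)}_{1}$ on all generators, hence equals $\epsilon$; being visibly a composition of $\pi^{(q)}_{s_{r}}$ with $*$-homomorphisms, it witnesses $\ker\pi^{(q)}_{s_{r}}\subseteq\ker\epsilon$. The delicate points are that one must evaluate at $1$ (not at some other point of $\mathbb{T}$) precisely so as to recover the \emph{counit} rather than a generic character, and that one should note no defining relation of $A_{q}(n)$ obstructs passage to the Calkin quotient — which is automatic, the quotient being a $*$-homomorphism on all of $B(l^{2}(\mathbb{N}))$. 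Everything is uniform in $q\in[0,1)$, which is what lets the crystal case $q=0$ be handled on the same footing as $q\neq0$.
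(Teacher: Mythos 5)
Your proposal is correct and follows essentially the same route as the paper: the paper also invokes the subword property of the Bruhat order and then defines a $*$-homomorphism $ev$ on the tensor power of the Toeplitz algebra sending $S\mapsto 1$ on the deleted components (which is exactly your Calkin-quotient-plus-evaluation-at-$1$ character, i.e.\ your replacement of those convolution factors by the counit $\epsilon$), concluding $ev\circ\pi^{(q)}_{t,w}=\pi^{(q)}_{t,u}$ via independence of the reduced expression. Your write-up merely makes explicit the counit identity and the kernel-inclusion bookkeeping that the paper leaves as ``easy to see.''
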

\begin{proof}
We will denote the Toeplitz algebra by $\mathscr{T}$. Now using the subword property of the Bruhat ordering on $\mathcal{S}_{n+1}$ (Theorem 2.2.2, $\cite{BjoBre-2004}$) one can obtain a reduced expression for $u$ by deleting some Coxeter generators in a reduced word expression for $w$. Therefore, there is a $*$-homomorphism $ev:\mathscr{T}^{l(w)} \to \mathscr{T}^{l(u)}$ that sends $S\to 1$ for the deleted components in the reduced expression for $w$ and all other remaining components $S\to S$. Now, together with Proposition $\ref{factor}$, it is easy to see that $ev \circ \pi^{(q)}_{t,w}=\pi^{(q)}_{t,u}$. 
\end{proof}
\noindent It should also be noted that $\pi^{(q)}_{t,u}$ factorizes through $\pi^{(q)}_{t,w}$ if and only if $Ker\, \pi^{(q)}_{t,w}\subseteq Ker\,\pi^{(q)}_{t,u}$ for any $t\in \mathbb{T}$. For $q\in (0,1)$, the converse of Theorem $\ref{Bfactor}$ also holds for $A_{q}(n)$ (Theorem 4.1, \cite{NesTus-2012}) but it is not clear whether the same is true for $A_{0}(n)$.
\begin{cor}
\label{t1}
  \textit{For any $q\in[0,1)$, the topology of $\hat{A}_{q}$ is not Hausdorff}.
\end{cor}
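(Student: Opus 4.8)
The plan is to exhibit two distinct points of $\hat{A}_q$ that cannot be separated by disjoint open sets, using Theorem~\ref{Bfactor} together with the fact that the topology $\tau$ on $\hat{A}_q$ is the pullback of the Jacobson topology on $\mathrm{Prim}(A_q)$. Recall that in the Jacobson topology the closure of a singleton $\{\ker\pi\}$ consists of all primitive ideals containing $\ker\pi$; equivalently, the closure of a point $[\pi]\in\hat{A}_q$ (after identifying $\hat{A}_q$ with $\mathrm{Prim}(A_q)$, legitimate here since $A_q$ is type I by Theorem~\ref{mu0}, and the same argument gives type I for $q\in(0,1)$ from Theorem~\ref{muq}) contains every $[\sigma]$ with $\ker\pi\subseteq\ker\sigma$. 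So to break the Hausdorff property it suffices to produce a point whose closure contains at least two distinct points; then those two points in the closure cannot be separated, since any open set containing either of them must meet every open neighbourhood of the first point.

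First I would fix a permutation $w\in\mathcal{S}_{n+1}$ with $w\neq 1$ whose reduced expression is long enough to dominate two distinct permutations $u_1,u_2$ in the Bruhat order; for instance take $w=s_1 s_2$ (so $l(w)=2$), which dominates both $u_1=s_1$ and $u_2=s_2$, and fix any $t\in\mathbb{T}$. By Theorem~\ref{Bfactor}, both $\pi^{(q)}_{t,s_1}$ and $\pi^{(q)}_{t,s_2}$ factorize through $\pi^{(q)}_{t,s_1 s_2}$, so
\[
\ker\pi^{(q)}_{t,s_1 s_2}\subseteq\ker\pi^{(q)}_{t,s_1}\cap\ker\pi^{(q)}_{t,s_2}.
\]
By Theorems~\ref{muq} and~\ref{mu0} the representations $\pi^{(q)}_{t,s_1}$ and $\pi^{(q)}_{t,s_2}$ are inequivalent irreducibles, hence correspond to distinct points of $\hat{A}_q$; moreover their kernels are distinct primitive ideals (two inequivalent irreducibles of a type I $C^*$-algebra with the same kernel is impossible, since the kernel determines the equivalence class). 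Therefore both $[\pi^{(q)}_{t,s_1}]$ and $[\pi^{(q)}_{t,s_2}]$ lie in the closure of $\{[\pi^{(q)}_{t,s_1 s_2}]\}$ in $\hat{A}_q$.

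Now I would spell out the non-Hausdorff conclusion: suppose $U_1\ni[\pi^{(q)}_{t,s_1}]$ and $U_2\ni[\pi^{(q)}_{t,s_2}]$ are open sets in $\hat{A}_q$. Since $[\pi^{(q)}_{t,s_j}]$ lies in the closure of $[\pi^{(q)}_{t,s_1 s_2}]$, every open set containing $[\pi^{(q)}_{t,s_j}]$ must contain $[\pi^{(q)}_{t,s_1 s_2}]$ as well (the defining property of the Jacobson topology: $[\sigma]\in\overline{\{[\pi]\}}$ means every neighbourhood of $[\sigma]$ contains $[\pi]$). Hence $[\pi^{(q)}_{t,s_1 s_2}]\in U_1\cap U_2$, so $U_1\cap U_2\neq\varnothing$. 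Since $[\pi^{(q)}_{t,s_1}]\neq[\pi^{(q)}_{t,s_2}]$, this shows $\hat{A}_q$ is not Hausdorff for every $q\in[0,1)$.

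The argument is essentially routine given Theorem~\ref{Bfactor}; the only point requiring a little care is the identification of $\hat{A}_q$ with $\mathrm{Prim}(A_q)$ and the translation of the factorization $\ker\pi^{(q)}_{t,w}\subseteq\ker\pi^{(q)}_{t,u_j}$ into the topological statement ``$[\pi^{(q)}_{t,u_j}]\in\overline{\{[\pi^{(q)}_{t,w}]\}}$''. This is exactly the content of the homeomorphism $\hat{A}_q\cong\mathrm{Prim}(A_q)$ for separable type I $C^*$-algebras (3.1.6 and 9.1 of \cite{Dix-1977}), recalled in Section 3, combined with the description of closed sets in the Jacobson topology. One should also note in passing that this explains why, despite being Borel isomorphic to $\bigsqcup_{(n+1)!}\mathbb{T}$, the spaces $\hat{A}_q$ are genuinely non-Hausdorff and in particular not homeomorphic to that disjoint union, as already remarked after Theorem~\ref{iso1}.
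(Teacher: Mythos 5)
Your proof is correct and rests on exactly the same mechanism as the paper's: Theorem~\ref{Bfactor} gives the kernel containment $\ker\pi^{(q)}_{t,w}\subseteq\ker\pi^{(q)}_{t,u}$ for Bruhat subwords $u\leq w$, which is then read off against the description of closed sets in the Jacobson topology. The only cosmetic difference is that you exhibit two inseparable points (both $[\pi^{(q)}_{t,s_1}]$ and $[\pi^{(q)}_{t,s_2}]$ lying in the closure of $\{[\pi^{(q)}_{t,s_1s_2}]\}$), whereas the paper simply observes that the singleton $\{[\pi^{(q)}_{t,w_2}]\}$ is not closed, so the space fails to be $T_1$ and a fortiori fails to be Hausdorff.
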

\begin{proof}
Choose a $t\in \mathbb{T}$ and a reduced word $w_{2} \in \mathcal{S}_{n+1}$. If $w_{1}$ is a Bruhat subword of $w_{2}$, then there is a $C^*$-homomorphism $\phi:\mathscr{T}^{l(w_{2})}\to \mathscr{T}^{l(w_{1})}$ such that
$\phi\circ \pi_{t,w_{2}}^{(q)}=\pi_{t,w_{1}}^{(q)}$. That means $Ker\,\pi_{t,\omega_2}^{(q)}\subseteq Ker\,\pi_{t,\omega_1}^{(q)}$. Therefore, from the description of closed sets of the Jacobson topology on $Prim(A_{q})$ it follows that the set $\{Ker\,\pi_{t,\omega_2}^{(q)}\}$ is not closed. Hence, its pullback $\displaystyle \{[V_{w_{2}}^{*}\pi_{t,\omega_2}^{(q)}( . )V_{w_{2}}]\}\subseteq \hat{A}_{q}$, which is a singelton set, is again not closed.
\end{proof}
\noindent We will use the notation $w_{L}= s_{1}(s_{2}s_{1})(s_{3}s_{2}s_{1})\cdots (s_{n}s_{n-1}\cdots s_{2}s_{1})$ for the longest word in $\mathcal{S}_{n+1}$. Any reduced word $w$ is a Bruhat subword of $w_{L}$. 
\begin{cor}
\label{density}
\textit{For all $q\in [0,1)$, $\hat{A}_{q,w_{L}}$ is dense in $\hat{A}_{q}$.}
\end{cor}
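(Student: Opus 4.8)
The plan is to combine the factorization result of Theorem~\ref{Bfactor} with the description of the Jacobson (hence pullback) topology on $\hat{A}_{q}$, exactly in the spirit of the proof of Corollary~\ref{t1}. Recall that since every reduced word $w$ is a Bruhat subword of the longest word $w_{L}$, Theorem~\ref{Bfactor} gives that for each $t\in\mathbb{T}$ the representation $\pi^{(q)}_{t,w}$ factorizes through $\pi^{(q)}_{t,w_{L}}$, i.e. $\mathrm{Ker}\,\pi^{(q)}_{t,w_{L}}\subseteq \mathrm{Ker}\,\pi^{(q)}_{t,w}$; and likewise each one-dimensional $\chi^{(q)}_{t}$ factorizes through $\pi^{(q)}_{t,w_{L}}$ (take $u=1\leq w_{L}$). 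So every primitive ideal of $A_{q}(n)$ contains $\mathrm{Ker}\,\pi^{(q)}_{t,w_{L}}$ for a suitable $t$.

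\textbf{Key steps.} First I would fix an arbitrary nonempty closed subset $C$ of $\hat{A}_{q}$ and, transporting through the homeomorphism $\hat{A}_{q}\to Prim(A_{q})$, view it as a closed subset of $Prim(A_{q})$; by the characterization of the Jacobson topology recalled at the start of Section~3, there is a subset $M\subseteq A_{q}$ such that $C$ is \emph{exactly} the set of primitive ideals containing $M$. Second, I would show $C$ meets $\hat{A}_{q,w_{L}}$: pick any $[\,\pi_{t,u}^{(q)}]\in C$ (nonempty), so $M\subseteq \mathrm{Ker}\,\pi^{(q)}_{t,u}$; since $u\leq w_{L}$, Theorem~\ref{Bfactor} gives $\mathrm{Ker}\,\pi^{(q)}_{t,w_{L}}\subseteq \mathrm{Ker}\,\pi^{(q)}_{t,u}$ — wait, that is the wrong inclusion for this argument. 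The correct route is instead: show that the closure of the single point $[\pi^{(q)}_{t,w_{L}}]$ contains \emph{every} point of the form $[\pi^{(q)}_{t,u}]$ and $[\chi^{(q)}_{t}]$ with the \emph{same} $t$, because $\mathrm{Ker}\,\pi^{(q)}_{t,w_{L}}\subseteq \mathrm{Ker}\,\pi^{(q)}_{t,u}$ means every closed set containing $[\pi^{(q)}_{t,u}]$'s complement... more directly: the smallest closed set containing $\{\mathrm{Ker}\,\pi^{(q)}_{t,w_{L}}\}$ is $\{\,P\in Prim(A_{q}): P\supseteq \mathrm{Ker}\,\pi^{(q)}_{t,w_{L}}\,\}$, and by the factorizations this set contains $\mathrm{Ker}\,\pi^{(q)}_{t,u}$ for every $u\leq w_{L}$ and $\mathrm{Ker}\,\chi^{(q)}_{t}$. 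Thus $\overline{\{[\pi^{(q)}_{t,w_{L}}]\}}\supseteq \{[\pi^{(q)}_{t,u}]:u\in\mathcal{S}_{n+1}\}\cup\{[\chi^{(q)}_{t}]\}$. Third, since $\hat{A}_{q}=\bigcup_{t\in\mathbb{T}}\big(\{[\chi^{(q)}_{t}]\}\cup\{[\pi^{(q)}_{t,u}]:u\neq 1\}\big)$ by Theorems~\ref{muq} and \ref{mu0}, we get $\hat{A}_{q}=\bigcup_{t\in\mathbb{T}}\overline{\{[\pi^{(q)}_{t,w_{L}}]\}}\subseteq \overline{\hat{A}_{q,w_{L}}}$, which is the claim.

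\textbf{Main obstacle.} The genuinely delicate point is the bookkeeping of which inclusion of kernels corresponds to which inclusion of closures, i.e. that $\mathrm{Ker}\,\pi^{(q)}_{t,w_{L}}\subseteq \mathrm{Ker}\,\pi^{(q)}_{t,u}$ forces $[\pi^{(q)}_{t,u}]$ into the closure of $\{[\pi^{(q)}_{t,w_{L}}]\}$ and not the other way around — this is exactly the direction used (correctly) in the proof of Corollary~\ref{t1}, and I would simply invoke that same reasoning. A second point to be careful about is that $\hat{A}_{q,w_{L}}$ is defined via a \emph{fixed} unitary $V_{w_{L}}$, but since we are working with equivalence classes of irreducibles the choice of $V_{w_{L}}$ is immaterial (as already noted in Section~3), so no issue arises there. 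Everything else is a direct unwinding of the topology's definition, so the corollary follows quickly from Theorem~\ref{Bfactor} plus the fact that $w_{L}$ dominates all reduced words in the Bruhat order.
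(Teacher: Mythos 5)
Your argument is correct and is essentially the paper's own: both proofs reduce density to the fact that $w_{L}$ dominates every reduced word in the Bruhat order, so that $\mathrm{Ker}\,\pi^{(q)}_{t,w_{L}}\subseteq \mathrm{Ker}\,\pi^{(q)}_{t,u}$ for all $u$, combined with the hull--kernel description of closures in $Prim(A_{q})$. The only cosmetic difference is that you compute the closure of each singleton $\{[\pi^{(q)}_{t,w_{L}}]\}$ and take the union over $t$, whereas the paper observes in one step that $\bigcap_{t}\mathrm{Ker}\,\pi^{(q)}_{t,w_{L}}=\{0\}$ and hence the closure of $\hat{A}_{q,w_{L}}$ is all of $\hat{A}_{q}$.
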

\begin{proof}
Since any reduced word is a Bruhat subword of the longest word $w_{L}$, $\bigcap_{t} Ker\,\pi^{(q)}_{t,w_{L}}=\{0\}$. This follows from the fact that no non-zero element can lie in the kernel of every irreducible representation of a $C^{*}$-algebra. Therefore, any element $[\pi] \in \hat{A}_{q}$ is in the closure of $\hat{A}_{q,w_{L}}$ and the density of $\hat{A}_{q,w_{L}}$ follows. 
\end{proof}
\section{Main results}
Our goal of this section is to extend Theorem \ref{irr} for any non-degenerate (respectively faithful) representations of $A_{0}(n)$ and describe $A_{0}(n)$ as the $C^{*}$-algebra generated by the limit operators of faithful representations of $A_{q}(n)$. We need a decomposition theorem for representations involving disintegration to prove these results, which we will discuss first. We refer to chapter 8 of \cite{Dix-1977} for more details. Let $Z$ be a Borel space and $\mathcal{H}_{n}$ be the standard $n$-dimensional Hilbert space. By $A$ we denote a separable $C^{*}$-algebra of type I. $\hat{A}$ stands for its spectrum and $\hat{A}_{n}$ stands for the collection of equivalence classes of irreducible representations of $A$ on $\mathcal{H}_{n}$.
\begin{defn}
A constant field $\Gamma_{n}$ of Hilbert spaces $\tau \to \mathcal{H}(\tau)$ defined by $\mathcal{H}_{n}$ is a field with the following properties,
\begin{itemize}
    \item $\mathcal{H}(\tau)= \mathcal{H}_{n}$ for all $\tau \in Z$.
    \item $\Gamma_{n}=\{ \tau\to x(\tau): \tau \to \langle x(\tau)|a \rangle$ is Borel for all $a\in \mathcal{H}_{n}\}$.
\end{itemize}
\end{defn}
Then for every positive Borel measure $\mu$ on $Z$ one has $\Gamma_{n}\subseteq \Gamma_{n}^{\mu}$ where
\begin{center}
    $\Gamma_{n}^{\mu} = \{\tau\to x(\tau): \tau\to \langle x(\tau)|a\rangle$ is $\mu$-measurable for all $a\in \mathcal{H}_{n} \}$. 
\end{center}
Let $n$ be a cardinal and $n\leq \aleph_{0}$. There is a unique Borel field of Hilbert spaces $\Gamma_{c}: \tau \to \mathcal{H}(\tau)$ on $\hat{A}$ such that $\Gamma$ reduces on $\hat{A}_{n}$ to the constant field defined by $\mathcal{H}_{n}$. For any positive Borel measure $\mu$ on $\hat{A}$, $\Gamma_{c}$ is endowed with a $\mu$-measurable field structure that is $\Gamma_{c}\subseteq \Gamma_{c}^{\mu}$ where
\begin{center}
    $\Gamma_{c}^{\mu}=\{ \tau\to x(\tau): (\tau\to x(\tau))|_{\hat{A}_{n}}\in \Gamma_{n}^{\mu}\}$.
\end{center}
We will frequently talk about representations of $A$ on the Hilbert space $\displaystyle \int_{\hat{A}}^{\bigoplus}\mathcal{H}(\tau)d\mu$ defined by the field $((\mathcal{H}(\tau))_{\tau\in \hat{A}}, \Gamma_{c}^{\mu})$.
\begin{thm}[8.6.2, \cite{Dix-1977}]
\label{br1}
\textit{1. There is a field $\tau \to \psi(\tau)$ of irreducible representations of $A$ (defined on $\hat{A}$) in the Hilbert spaces $\mathcal{H}(\tau)$ such that $\psi(\tau)\in \tau$ and this field is $\mu$-measurable for any positive measure $\mu$ on $\hat{A}$.\\
2. If there is another field of irreducible representations $\tau \to \psi_{1}(\tau)$ with the same properties as above, then for every positive Borel measure $\mu$ on $\hat{A}$, we have
\begin{align*}
    \int_{\hat{A}}^{\bigoplus}\psi(\tau)d\mu \cong \int_{\hat{A}}^{\bigoplus}\psi_{1}(\tau)d\mu.
\end{align*}}
\end{thm}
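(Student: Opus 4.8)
The plan is to obtain both parts by a measurable-selection argument over the standard Borel space $\hat{A}$, using the decomposition $\hat{A}=\bigsqcup_{1\leq n\leq \aleph_{0}}\hat{A}_{n}$ into the Borel subsets on which the class acts on $\mathcal{H}_{n}$, together with the fact (recalled in Lemma \ref{polish}, via $3.5.8$ and $3.7.4$ of \cite{Dix-1977}) that for each $n$ the space $Irr_{n}(A)$ of irreducible representations of $A$ on $\mathcal{H}_{n}$ with the topology of strong pointwise convergence is Polish and the canonical quotient map $\rho_{n}:Irr_{n}(A)\to \hat{A}_{n}$ is a continuous open surjection. For part 1, I would first invoke the Borel section theorem for continuous open surjections between Polish spaces to obtain a Borel section $\sigma_{n}:\hat{A}_{n}\to Irr_{n}(A)$ of $\rho_{n}$, and then patch them into a single Borel map $\sigma=\bigsqcup_{n}\sigma_{n}:\hat{A}\to Irr(A)$, setting $\psi(\tau):=\sigma(\tau)$. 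By construction $\psi(\tau)\in\tau$. To see that $\psi$ is $\mu$-measurable for every positive Borel measure $\mu$, note that on $\hat{A}_{n}$ the canonical Borel field $\Gamma_{c}$ restricts to the constant field $\Gamma_{n}$ determined by $\mathcal{H}_{n}$, and for fixed $a\in A$ and $x,y\in\mathcal{H}_{n}$ the evaluation $\pi\mapsto\langle\pi(a)x\,|\,y\rangle$ is continuous on $Irr_{n}(A)$; hence $\tau\mapsto\langle\psi(\tau)(a)x\,|\,y\rangle$ is Borel on each piece, and therefore on all of $\hat{A}$. This is precisely $\mu$-measurability of the field $\tau\mapsto\psi(\tau)$, for any $\mu$.

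For part 2, given two such fields $\psi,\psi_{1}$, fix a positive Borel measure $\mu$ and a countable dense $*$-subalgebra $A_{0}\subseteq A$. For each $\tau$ the representations $\psi(\tau)$ and $\psi_{1}(\tau)$ lie in the same class $\tau$ and act on the same space $\mathcal{H}(\tau)$, so the set $\mathcal{U}(\tau)$ of unitaries $U$ on $\mathcal{H}(\tau)$ with $U\psi(\tau)(a)U^{*}=\psi_{1}(\tau)(a)$ for all $a$ is nonempty (in fact a single orbit of $\mathbb{T}$, by Schur's lemma applied to the irreducible $\psi(\tau)$). The condition defining $\bigsqcup_{\tau}\{\tau\}\times\mathcal{U}(\tau)$ inside the standard Borel bundle $\tau\mapsto\mathcal{U}(\mathcal{H}(\tau))$ can be tested on $A_{0}$ and is thus Borel with everywhere nonempty sections, so by the Jankov--von Neumann selection theorem there is a $\mu$-measurable field $\tau\mapsto U(\tau)$ with $U(\tau)\in\mathcal{U}(\tau)$. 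Then $U:=\int_{\hat{A}}^{\bigoplus}U(\tau)\,d\mu$ is a unitary on $\int_{\hat{A}}^{\bigoplus}\mathcal{H}(\tau)\,d\mu$, and evaluating on the dense set of decomposable vectors gives $U\big(\int_{\hat{A}}^{\bigoplus}\psi(\tau)\,d\mu\big)(a)U^{*}=\big(\int_{\hat{A}}^{\bigoplus}\psi_{1}(\tau)\,d\mu\big)(a)$ first for $a\in A_{0}$ and then, by density and norm-continuity, for all $a\in A$; this is the asserted unitary equivalence.

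The main obstacle is measurability, not algebra. In part 1 one must be sure that the open continuous surjection $\rho_{n}$ genuinely admits a Borel section, and that the canonical Borel field structure $\Gamma_{c}$ on $\hat{A}$ is compatible, piece by piece, with the constant fields $\Gamma_{n}$ — this is where standardness of $\hat{A}$ for type I $A$ (Theorem \ref{mu0}; $4.6.1$ of \cite{Dix-1977}) is essential. In part 2 the delicate point is that the bundle of intertwining unitaries is a Borel subset of a standard Borel bundle of unitary groups with everywhere nonempty fibres, so that a measurable cross section exists; nonemptiness comes from irreducibility, Borelness from restricting to the countable set $A_{0}$, and the section from the selection theorem. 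Once these measurable-selection inputs are granted, the remaining work — patching over the $\hat{A}_{n}$, continuity of the evaluation maps, and the computation on decomposable vectors — is routine.
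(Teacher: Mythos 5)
This theorem is not proved in the paper at all: it is quoted verbatim from Dixmier [8.6.2, \cite{Dix-1977}], so your proposal can only be compared with the standard argument there. Your part 2 is essentially that standard argument (pointwise unitary equivalence of the two fields, Schur's lemma to identify the intertwiners up to a scalar, a measurable selection of a field of unitaries tested on a countable dense $*$-subalgebra, then integration of the unitaries and verification on decomposable vectors), and it is correct as written, provided you carry it out piecewise over the pieces $\hat{A}_{n}$ where the canonical field is constant.

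In part 1, however, the key selection step is misapplied. You invoke ``the Borel section theorem for continuous open surjections between Polish spaces'' for $\rho_{n}:Irr_{n}(A)\to\hat{A}_{n}$, but $\hat{A}_{n}$ with the pulled-back Jacobson topology is not Polish: it is not Hausdorff, and singletons need not even be closed --- the paper's Corollary \ref{t1} exhibits exactly this failure. Consequently the fibres of $\rho_{n}$ need not be closed and the Kuratowski--Ryll-Nardzewski type selection theorem you have in mind does not apply to this quotient as a topological map. The correct input is the type I hypothesis used in Borel, not topological, form: $\hat{A}_{n}$ is a standard Borel space, the fibres of $\rho_{n}$ are the orbits of the Polish group of unitaries of $\mathcal{H}_{n}$ acting continuously on $Irr_{n}(A)$, and for such an action with countably separated quotient a Borel cross-section exists (the Effros--Mackey cross-section theorem; this is the route taken in \cite{Dix-1977} via its appendix). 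Alternatively, since the statement only demands $\mu$-measurability for every positive measure $\mu$, the Jankov--von Neumann selection you already use in part 2 gives a universally measurable section of $\rho_{n}$ and suffices here as well. With either repair, the remainder of your part 1 (patching over the $\hat{A}_{n}$, continuity of $\pi\mapsto\langle\pi(a)x\,|\,y\rangle$ on $Irr_{n}(A)$, compatibility with the canonical field $\Gamma_{c}$) is sound.
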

\noindent This equivalence class of representations is denoted by $\displaystyle \int_{\hat{A}}^{\bigoplus}\tau d\mu$.
\begin{thm}[8.6.6, \cite{Dix-1977}]
\label{br2}
 \textit{   Let $A$ be a separable type I $C^{*}$-algebra and $\pi: A\to B(\mathcal{H})$ be a non-degenerate representation of $A$ on a separable Hilbert space $\mathcal{H}$.
    Then there are mutually singular positive Borel measures $\mu_{1},\mu_{2},...,\mu_{\infty}$ on $\hat{A}$ such that
    \begin{align*}
        \pi \cong \int_{\hat{A}}^{\bigoplus}\tau d\mu_{1}(\tau)\ \bigoplus \; 2 \int_{\hat{A}}^{\bigoplus}\tau d\mu_{2}(\tau) \bigoplus \cdots \bigoplus \; \aleph_{0}\int_{\hat{A}}^{\bigoplus}\tau d\mu_{\infty}(\tau).
    \end{align*}}
\end{thm}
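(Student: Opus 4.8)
Although this disintegration theorem is quoted from \cite{Dix-1977}, I indicate how I would reconstruct it, since it is the engine for everything in this section. The plan is to pass to the von Neumann algebra $M=\pi(A)''$ on the separable Hilbert space $\mathcal{H}$, split off the uniform-multiplicity blocks of the commutant $M'=\pi(A)'$, and then disintegrate each block over its abelian center against a measure that I transport to $\hat{A}$.

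First I would use that $A$ is separable of type I to conclude that $M=\pi(A)''$, hence also $M'$, is a type I von Neumann algebra on a separable Hilbert space. Classical multiplicity theory then gives a central decomposition $1=\sum_{n}p_{n}$ with $p_{n}\in Z(M')=Z(M)$ pairwise orthogonal, $n$ ranging over $\{1,2,\dots,\aleph_{0}\}$, and $M'p_{n}$ homogeneous of type $I_{n}$. Putting $\mathcal{H}_{n}=p_{n}\mathcal{H}$ and $\pi_{n}=\pi|_{\mathcal{H}_{n}}$ yields $\pi\cong\bigoplus_{n}\pi_{n}$ with $\pi_{n}$ of uniform multiplicity $n$, i.e.\ $\pi_{n}\cong n\cdot\rho_{n}$ for a multiplicity-free representation $\rho_{n}$ (meaning $\rho_{n}(A)'$ abelian); this is where the coefficients $1,2,\dots,\aleph_{0}$ of the statement originate.

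Next, for each $n$ I would disintegrate $\rho_{n}$. Multiplicity-freeness gives $\rho_{n}(A)'=Z(\rho_{n}(A)'')\cong L^{\infty}(X_{n},\nu_{n})$ for a standard probability space $(X_{n},\nu_{n})$, and disintegrating $\rho_{n}$ over it, exactly as in the measurable-field construction behind Theorem~\ref{br1}, gives $\rho_{n}\cong\int_{X_{n}}^{\bigoplus}\sigma_{x}\,d\nu_{n}(x)$ with $\sigma_{x}$ a factor representation for $\nu_{n}$-a.e.\ $x$; since $A$ is type I these factor representations are quasi-equivalent to irreducibles, and since $\rho_{n}$ is multiplicity-free the fibre $\sigma_{x}$ is actually irreducible a.e. The map $j_{n}\colon x\mapsto[\sigma_{x}]\in\hat{A}$ is Borel (measurability of the canonical field of irreducibles over the standard Borel space $\hat{A}$) and injective modulo $\nu_{n}$-null sets, because if $j_{n}$ were constant on a positive-measure set the associated subrepresentation would be a nontrivial multiple of one irreducible, violating multiplicity-freeness of $\rho_{n}$. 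By Lusin--Souslin $j_{n}$ is then a Borel isomorphism onto its image off a null set, so $\mu_{n}:=(j_{n})_{*}\nu_{n}$ is a standard Borel measure on $\hat{A}$, and matching the disintegration of $\rho_{n}$ against the canonical field of Theorem~\ref{br1} gives $\rho_{n}\cong\int_{\hat{A}}^{\bigoplus}\tau\,d\mu_{n}(\tau)$. Assembling, $\pi\cong\bigoplus_{n}n\cdot\rho_{n}\cong\bigoplus_{n}n\int_{\hat{A}}^{\bigoplus}\tau\,d\mu_{n}(\tau)$.

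The step I expect to be the genuine obstacle is the mutual singularity of the $\mu_{n}$; it is not a cosmetic normalization but is equivalent to uniqueness of the multiplicity decomposition of the second step. I would establish it by showing that if $\mu_{m}$ and $\mu_{n}$ with $m\neq n$ both charged a common Borel set $E\subseteq\hat{A}$, then pulling the corresponding spectral projection of $Z(M)$ back through the disintegration would produce a subrepresentation of $\pi$ that is simultaneously of uniform multiplicity $m$ and of uniform multiplicity $n$ on overlapping central pieces, contradicting that the multiplicity $\dim\operatorname{Hom}(\sigma,\pi)$ of each irreducible $\sigma$ in $\pi$ is a well-defined cardinal; on the atomic part this is just the remark that an irreducible occurring in $\pi$ with intertwiner space of dimension $N$ can appear only in the block $\pi_{N}$, and on the continuous part one runs the same argument fibrewise. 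With mutual singularity in hand the displayed formula is exactly the claim, and uniqueness of the decomposition up to equivalence of measures follows from the same disjointness/equivalence dictionary for multiplicity-free representations.
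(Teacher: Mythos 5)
The paper offers no proof of this statement: it is imported verbatim as Theorem 8.6.6 of \cite{Dix-1977}, so the only meaningful comparison is with Dixmier's own argument, and your sketch reconstructs exactly that route (uniform-multiplicity decomposition of the type I commutant, central disintegration of the multiplicity-free blocks into a.e.\ irreducible fibres, essentially injective pushforward of the measures to the standard Borel space $\hat{A}$, and mutual singularity via disjointness of the homogeneous pieces). The sketch is correct and takes essentially the same approach as the cited source.
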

\vskip 0.1cm
\noindent The field of irreducible representations that appeared in part 1 of Theorem $\ref{br1}$ is completely existential. Therefore we will directly construct a field of irreducible representations $\tau_{q} \to \psi_{q}(\tau_{q})$ of $A_{q}$ which satisfies all the properties of part 1 of Theorem \ref{br1} and hence by part 2 of Theorem $\ref{br1}$ we can conclude that $\displaystyle \int_{\hat{A}_{q}}^{\bigoplus}\tau_{q} d\mu(\tau_{q}) = \left[ \int_{\hat{A}_{q}}^{\bigoplus}\psi_{q}(\tau_{q})d\mu(\tau_{q})\right].$
\vskip 0.2cm
\noindent Let $\mathcal{H}_{1},\mathcal{H}_{2},\cdots \mathcal{H}_{\infty}$ be standard Hilbert spaces of dimension 1,2,...$\aleph_{0}$. For each cardinal $n\leq \aleph_{0}$, the collection of all representations of $A$ on $\mathcal{H}_{n}$ is denoted by $Rep_{n}(A)$. The Borel structure on $Rep_{n}(A)$ is defined by the topology of strong pointwise convergence. One can equip $Rep(A)= \bigcup_{n\leq \aleph_{0}} Rep_{n}(A)$ with the sum Borel structure and $Irr(A)$ is then a Borel subset of $Rep(A)$ because for each $n$, $Irr_{n}(A)$ is a Borel subset of $Rep_{n}(A)$ (see 3.8, $\cite{Dix-1977}$).
\begin{prop}[8.1.8, \cite{Dix-1977}]
\label{decom}
   \textit{Let $(Z,\mathcal{B},\mu)$ be a Borel space and $\tau \to \mathcal{H}(\tau)$ be a $\mu$-measurable field of Hilbert spaces defined on $Z$. Suppose that $Z$ is the union of mutually disjoint Borel sets $Z_{1},Z_{2},...Z_{\infty}$ and that $\tau \to \mathcal{H}(\tau)$ reduces on $Z_{n}$ to the constant field $\tau \to \mathcal{H}_{n}$. Let $\tau\to \pi(\tau)$ be a field of representations of $A$ on the Hilbert spaces $\mathcal{H}(\tau)$. Then $\tau \to \pi(\tau)$ is $\mu$-measurable if and only if $\tau \to \pi(\tau)$ is equal $\mu$-almost everywhere to a Borel mapping of $Z$ into $Rep(A)$.}
\end{prop}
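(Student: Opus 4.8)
The plan is to localize the statement to a single piece $Z_{n}$ of the given Borel partition, trivialize the field of Hilbert spaces there so that a field of representations becomes literally a map $Z_{n}\to Rep_{n}(A)$, identify $\mu$-measurability of the field with ordinary $\mu$-measurability of that map into the standard Borel space $Rep_{n}(A)$, and finally invoke the standard fact that a $\mu$-measurable map into a standard Borel space agrees $\mu$-almost everywhere with a Borel map.

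\emph{Step 1: localization.} Both assertions respect the countable disjoint Borel decomposition $Z=\bigcup_{n}Z_{n}$: the field $\tau\to\pi(\tau)$ is $\mu$-measurable iff each of its restrictions to $Z_{n}$ is $(\mu|_{Z_{n}})$-measurable, and it is $\mu$-a.e.\ equal to a Borel map $Z\to Rep(A)$ iff each restriction is $(\mu|_{Z_{n}})$-a.e.\ equal to a Borel map $Z_{n}\to Rep_{n}(A)$; here one uses that the field reduces on $Z_{n}$ to the constant field defined by $\mathcal{H}_{n}$, so $\pi(\tau)\in Rep_{n}(A)$ for $\tau\in Z_{n}$, and that $Rep(A)=\bigcup_{m}Rep_{m}(A)$ carries the sum Borel structure with $\{Z_{n}\}$ already a Borel partition. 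Thus we may assume $Z=Z_{n}$, $\mathcal{H}(\tau)=\mathcal{H}_{n}$ for all $\tau$, and $\Gamma_{n}\subseteq\Gamma_{n}^{\mu}$ the associated $\mu$-measurable structure; under this identification a field of representations $\tau\to\pi(\tau)$ \emph{is} a single map $p\colon Z\to Rep_{n}(A)$, $p(\tau)=\pi(\tau)$.

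\emph{Step 2: measurable field versus measurable map.} Fix countable dense sets $\{a_{k}\}\subseteq A$ and $\{\xi_{l}\}\subseteq\mathcal{H}_{n}$; by separability the Borel structure of $Rep_{n}(A)$ (topology of strong pointwise convergence) is generated by the coefficient maps $\pi\mapsto\langle\pi(a_{k})\xi_{l}|\xi_{m}\rangle$. If $\tau\to\pi(\tau)$ is a $\mu$-measurable field, then testing against the constant vector fields $\xi_{l},\xi_{m}\in\Gamma_{n}$ makes each $\tau\mapsto\langle\pi(\tau)(a_{k})\xi_{l}|\xi_{m}\rangle$ $\mu$-measurable, and since these pull back a generating family of Borel sets, $p$ is $\mu$-measurable. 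Conversely, if $p$ is $\mu$-measurable then for each $a\in A$ and each pair of constant vector fields $\xi,\eta$ the function $\tau\mapsto\langle\pi(\tau)(a)\xi|\eta\rangle$ is $p$ composed with a strongly continuous (hence Borel) function on $Rep_{n}(A)$, so it is $\mu$-measurable; one then passes to arbitrary $\xi(\cdot),\eta(\cdot)\in\Gamma_{n}^{\mu}$ by approximating them $\mu$-a.e.\ by countably-valued $\mu$-measurable vector fields and using the uniform bound $\|\pi(\tau)(a)\|\le\|a\|$, so that $\tau\mapsto\langle\pi(\tau)(a)\xi(\tau)|\eta(\tau)\rangle$ is a $\mu$-a.e.\ limit of $\mu$-measurable functions. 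This reconciliation --- making measurability checked only against the constant subfield $\Gamma_{n}$ propagate to the full $\mu$-completed field $\Gamma_{n}^{\mu}$ --- is the step I expect to require the most care.

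\emph{Step 3: conclusion.} Since $A$ and $\mathcal{H}_{n}$ are separable, $Rep_{n}(A)$ is Polish, hence a standard Borel space, so a $\mu$-measurable map $p\colon Z\to Rep_{n}(A)$ coincides $\mu$-a.e.\ with an honest Borel map, and conversely any map $\mu$-a.e.\ equal to a Borel map is $\mu$-measurable (cf.\ Appendix B of \cite{Dix-1977}, and 3.7--3.8 there for the Borel structure of $Rep_{n}(A)$). Combining this with Step 2 and re-assembling over $n$ (Step 1) gives the stated equivalence.
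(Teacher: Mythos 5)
The paper does not prove this statement: it is imported verbatim as Proposition 8.1.8 of \cite{Dix-1977}, so there is no in-paper argument to compare against. Your reconstruction is correct and is essentially Dixmier's own proof: localize to the pieces $Z_{n}$ on which the field is constant, identify $\mu$-measurability of the field of representations with $\mu$-measurability of the induced map into the standard Borel space $Rep_{n}(A)$ (the passage from the constant fundamental fields in $\Gamma_{n}$ to arbitrary fields in $\Gamma_{n}^{\mu}$ being justified, as you note, by countably-valued approximation together with the uniform bound $\|\pi(\tau)(a)\|\leq\|a\|$), and then apply the standard fact that a $\mu$-measurable map into a standard Borel space agrees $\mu$-almost everywhere with a Borel map. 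I see no gap.
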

\noindent For each $q\in [0,1)$, $A_{q}$ is a separable $C^{*}$-algebra of type I (see \cite{Bra-1990}, \cite{GirPal-2024}). Define a field of irreducible representations $\tau_{q}\to \psi_{q}(\tau_{q})$ on $\hat{A}_{q}$ as follows,
\begin{align*}
    \psi_{q}(\tau_{q})= \left\{ 
    \begin{matrix}
        V^{*}_{w}\pi^{(q)}_{t,w}( . )V_{w} & \tau_{q}=[V^{*}_{w}\pi^{(q)}_{t,w}( . )V_{w}]\\
        \chi_{t}^{(q)} & \tau_{q}= [\chi_{t}^{(q)}]
        \end{matrix}\right.
\end{align*}
\begin{cor}
\label{exist}
 \textit{Let $q\in [0,1)$. Then $\tau_{q}\to \psi_{q}(\tau_{q})$ is a $\mu$-measurable field of representations for any positive Borel measure $\mu$ on $\hat{A}_{q}$.}
\end{cor}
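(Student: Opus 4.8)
The plan is to verify that the field $\tau_q \to \psi_q(\tau_q)$ satisfies the criterion of Proposition~\ref{decom}, namely that it agrees $\mu$-almost everywhere (in fact everywhere) with a Borel map from $\hat{A}_q$ into $Rep(A_q)$. Since the field is genuinely constructed rather than existential, the only real content is measurability, and by Proposition~\ref{decom} it suffices to exhibit Borel maps on each piece of the decomposition $\hat{A}_q = \hat{A}_{q,1} \bigsqcup \left(\bigsqcup_{w\neq 1} \hat{A}_{q,w}\right)$, which we already know (from Section~3) to be a decomposition into Borel subsets, with $\hat{A}_{q,1}$ consisting of one-dimensional irreducibles and each $\hat{A}_{q,w}$ consisting of irreducibles on $l^2(\mathbb{N})^{\otimes l(w)} \cong \mathcal{H}_\infty$.

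The key steps are as follows. First, on $\hat{A}_{q,1}$: the map $t \to \chi_t^{(q)}$ is continuous from $\mathbb{T}$ into $Rep_1(A_q)$ with its strong-pointwise-convergence topology, since each generator is sent to a character that depends continuously on $t$; composing with the Borel isomorphism $\hat{A}_{q,1} \cong \mathbb{T}$ (analogous to Lemma~\ref{lema2}, or simply because one-dimensional representations are classified by their characters) gives a Borel map $\hat{A}_{q,1} \to Rep(A_q)$ whose image consists of irreducibles and which selects $\psi_q$ on this piece. Second, on each $\hat{A}_{q,w}$: fix the unitary $V_w$; by Lemma~\ref{lema2} the map $f: Irr_{V_w,w}(A_q) \to \mathbb{T}$, $V_w^* \pi^{(q)}_{t,w}(\cdot) V_w \mapsto t$, is a homeomorphism, and by Proposition~\ref{iso} the restriction of $\rho$ to $Irr_{V_w,w}(A_q)$ is a Borel isomorphism onto $\hat{A}_{q,w}$. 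Hence the composite $\hat{A}_{q,w} \xrightarrow{(\rho|_{Irr_{V_w,w}})^{-1}} Irr_{V_w,w}(A_q) \hookrightarrow Rep_\infty(A_q)$ is Borel, and by construction it sends $\tau_q = [V_w^* \pi^{(q)}_{t,w}(\cdot)V_w]$ to exactly $\psi_q(\tau_q) = V_w^* \pi^{(q)}_{t,w}(\cdot)V_w$. Third, assemble these maps: since $\hat{A}_q$ is the disjoint union of countably many Borel sets and we have a Borel map into $Rep(A_q)$ on each, the glued map is Borel into $Rep(A_q)$ equipped with its sum Borel structure (a set is Borel iff its intersection with each summand is Borel). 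Finally, invoke Proposition~\ref{decom} with $Z = \hat{A}_q$, the field $\Gamma_c^\mu$ of Hilbert spaces, the decomposition of $\hat{A}_q$ by dimension, and the glued Borel map to conclude that $\tau_q \to \psi_q(\tau_q)$ is $\mu$-measurable for every positive Borel measure $\mu$.

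I expect the main technical point — though not a deep obstacle — to be checking that gluing Borel maps defined on the countably many Borel pieces $\hat{A}_{q,w}$ produces a Borel map into the \emph{sum} Borel structure on $Rep(A_q) = \bigcup_{n} Rep_n(A_q)$; one must be slightly careful that the target Borel structure is the one for which a subset is Borel precisely when it meets each $Rep_n(A_q)$ in a Borel set (as recalled before Proposition~\ref{decom}), so that preimages of Borel sets in $Rep(A_q)$ are countable unions of Borel preimages in the $\hat{A}_{q,w}$'s. Everything else is a direct bookkeeping consequence of the Borel isomorphisms established in Section~3 and the explicit continuity of $t \mapsto \chi_t^{(q)}$ and $t \mapsto \pi^{(q)}_{t,w}$, so the corollary follows essentially formally from Proposition~\ref{decom} together with Lemma~\ref{lema2} and Proposition~\ref{iso}.
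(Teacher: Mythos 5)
Your proposal is correct and follows essentially the same route as the paper: restrict the field to each Borel piece $\hat{A}_{q,1}$ and $\hat{A}_{q,w}$, use Proposition~\ref{iso} (the Borel isomorphism with $Irr_{V_w,w}(A_q)$) to get a Borel selection on each piece, glue over the countable disjoint Borel decomposition, and invoke Proposition~\ref{decom} with an empty null set. You simply spell out the one-dimensional piece and the sum Borel structure on $Rep(A_q)$ in more detail than the paper does.
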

\begin{proof}
    Restrict $\tau_{q}\to \psi_{q}(\tau_{q})$ to $\hat{A}_{q,w}$. Then by Proposition \ref{iso}
    this restricted map is a Borel map from $\hat{A}_{q,w}$ into $Irr^{ }_{V_{w},w}(A_{q})(\subseteq Irr(A_{q})\subseteq Rep(A_{q}))$. Since $\hat{A}_{q,1}$ and $\hat{A}_{q,w}$ for $w\neq 1$ are all disjoint Borel subsets of $\hat{A}_{q}$, it is easy to show that $\tau_{q}\to \psi_{q}(\tau_{q})$ is a Borel mapping from $\hat{A}_{q}$ into $Rep(A_{q})$. Hence, the result follows if we take $Z= \hat{A}_{q}, Z_{1}=\hat{A}_{q,1},Z_{\infty}=\hat{A}_{q,\infty}$ and the empty set to be the null set appearing in Proposition \ref{decom}.
\end{proof}
\begin{rem}
    Therefore, it follows Theorem \ref{br1} that $\displaystyle \int_{\hat{A}_{q}}^{\bigoplus}\tau_{q} d\mu(\tau_{q}) = \left[ \int_{\hat{A}_{q}}^{\bigoplus}\psi_{q}(\tau_{q})d\mu(\tau_{q})\right]$, for any positive Borel measure $\mu$ on $\hat{A}_{q}$ and for all $q\in [0,1)$. The key observation here is that the unitary $V_{w}$'s used to define this field of irreducibles is independent of $q\in [0,1)$ and plays a crucial role in the proof of the next theorem. 
\end{rem}
\noindent Now we will extend Theorem \ref{irr} for any non-degenerate representation of the crystallized algebra. 
\begin{thm}
\label{main}
 \textit{
Let $\pi^{(0)}$ be a non-degenerate representation of $A_{0}$
on a seperable Hilbert space $\mathcal{H}$. Then there exist non-degenerate
representations $\pi^{(q)}$ of $A_{q}$ 
on the same Hilbert space $\mathcal{H}$ such that
}
%%---------------------------
\begin{IEEEeqnarray*}{rCl}
\pi^{(0)}(z_{k,j}^{}(0))=
 \lim_{q\to 0+}
 \pi^{(q)}\left((-q)^{\min\{k-j,0\}}z_{k,j}^{}(q)\right),\qquad
  k,j\in\{1,2,\ldots,n+1\}.
\end{IEEEeqnarray*}
%%---------------------------
\end{thm}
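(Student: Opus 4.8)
\textbf{Proof strategy for Theorem \ref{main}.}

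The plan is to reduce the non-degenerate case to the irreducible case (Theorem \ref{irr}) via the disintegration machinery assembled above, using the fact that the spectra $\hat A_0$ and $\hat A_q$ are Borel isomorphic through the \emph{specific} map $T_q$ of Theorem \ref{iso1}, and that the field of irreducibles $\tau\to\psi_q(\tau)$ constructed in Corollary \ref{exist} is built from unitaries $V_w$ that do \emph{not} depend on $q$. First I would invoke Theorem \ref{br2} to write $\pi^{(0)}\cong\bigoplus_{m\le\aleph_0} m\int_{\hat A_0}^{\bigoplus}\tau\,d\mu_m(\tau)$ for mutually singular positive Borel measures $\mu_1,\dots,\mu_\infty$ on $\hat A_0$. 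By the remark following Corollary \ref{exist}, each direct integral is realized concretely as $\int_{\hat A_0}^{\bigoplus}\psi_0(\tau)\,d\mu_m(\tau)$ on the Hilbert space $\int_{\hat A_0}^{\bigoplus}\mathcal H(\tau)\,d\mu_m$. Now push each $\mu_m$ forward along $T_q^{-1}:\hat A_q\to\hat A_0$, i.e. set $\mu_m^{(q)}:=(T_q^{-1})_*\mu_m$ on $\hat A_q$; since $T_q$ is a Borel isomorphism carrying $[V_w^*\pi^{(0)}_{t,w}(\cdot)V_w]$ to $[V_w^*\pi^{(q)}_{t,w}(\cdot)V_w]$ and one-dimensional points to one-dimensional points, the Hilbert-space dimension function $\tau\mapsto\dim\mathcal H(\tau)$ is preserved, so the measurable field of Hilbert spaces on $\hat A_q$ is literally the same field transported by $T_q$, and $\int_{\hat A_q}^{\bigoplus}\mathcal H(\tau_q)\,d\mu_m^{(q)}$ is canonically unitary to $\int_{\hat A_0}^{\bigoplus}\mathcal H(\tau)\,d\mu_m$. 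Define $\pi^{(q)}:=\bigoplus_{m\le\aleph_0} m\int_{\hat A_q}^{\bigoplus}\psi_q(\tau_q)\,d\mu_m^{(q)}(\tau_q)$ on the same Hilbert space $\mathcal H$ (after the canonical identifications); it is non-degenerate because each $\psi_q(\tau_q)$ is, and because the measures are honest positive measures.

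It remains to verify the limit formula $\pi^{(0)}(z_{k,j}(0))=\lim_{q\to0+}\pi^{(q)}\!\big((-q)^{\min\{k-j,0\}}z_{k,j}(q)\big)$ in the strong (or norm, as stated) sense. The key point is that the identifications above are \emph{fibrewise}: over the point $T_q(\tau)=\tau_q$ with $\tau=[V_w^*\pi^{(0)}_{t,w}(\cdot)V_w]$, the operator $\pi^{(q)}\big((-q)^{\min\{k-j,0\}}z_{k,j}(q)\big)$ acts on the fibre $\mathcal H(\tau)=l^2(\mathbb N)^{\otimes l(w)}$ exactly as $V_w^*\pi^{(q)}_{t,w}\big((-q)^{\min\{k-j,0\}}z_{k,j}(q)\big)V_w$, and by the defining formula of $\pi^{(0)}_{t,w}$ (the displayed limit right after Corollary \ref{irr}) this converges, as $q\to0+$, strongly on each fixed fibre to $V_w^*\pi^{(0)}_{t,w}(z_{k,j}(0))V_w$, which is the fibre of $\pi^{(0)}(z_{k,j}(0))$. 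Since the generators $z_{k,j}(q)$ are images under the specialization maps of the same element $z_{k,j}(t)$ of $\mathcal O_t^A(SU(n+1))$ (the Remark after the displayed formula for $\pi^{(0)}_{s_r}$), and since the operators $V_w^*\pi^{(q)}_{t,w}(z_{k,j}(q))V_w$ have norms uniformly bounded in $q$ and in $t$ (indeed bounded by $1$ for the scaled generators, as one reads off from the explicit formulas for $\pi^{(q)}_{s_r}$ and the fact that convolution of representations is implemented by the comultiplication, which is a $*$-homomorphism), the convergence is uniform over the fibres. A dominated-convergence argument for direct integrals — the integrand is a bounded measurable field converging pointwise $\mu_m$-a.e. — then upgrades fibrewise strong convergence to strong convergence of the direct-integral operators, and the uniform-in-fibre bound together with the explicit asymptotics $(-q)^{\min\{k-j,0\}}z_{k,j}(q)\to z_{k,j}(0)$ at the level of norms gives the claimed norm convergence on all of $\mathcal H$.

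I expect the main obstacle to be the two bookkeeping issues hidden in ``the same Hilbert space $\mathcal H$'' and ``norm limit.'' For the first: one must check that pushing the measures forward by $T_q$ really produces the \emph{same} measurable field of Hilbert spaces (up to the canonical Borel isomorphism of fields determined by $T_q$), so that the direct integrals over $\hat A_0$ and $\hat A_q$ live on a common $\mathcal H$; this is where the $q$-independence of the unitaries $V_w$, emphasized in the Remark after Corollary \ref{exist}, is essential, and it is the technical heart of the argument. For the second: passing from strong fibrewise convergence to \emph{norm} convergence of the assembled operators requires genuine uniformity of the convergence rate across all fibres and all multiplicities $m\le\aleph_0$, which is why I would first establish a uniform estimate of the form $\big\|V_w^*\pi^{(q)}_{t,w}\big((-q)^{\min\{k-j,0\}}z_{k,j}(q)\big)V_w-V_w^*\pi^{(0)}_{t,w}(z_{k,j}(0))V_w\big\|\le\varepsilon(q)$ with $\varepsilon(q)\to0$ independent of $t$ and $w$; proving such a bound amounts to tracking the $q$-dependence through the finitely many factors $\pi^{(q)}_{s_r}$ and the comultiplication $\Delta_q$, using that $\|S\sqrt{1-q^{2N}}-S\|\to0$, $\|\sqrt{1-q^{2N}}S^*-S^*\|\to0$, $\|q^{N+1}-0\|\to0$, $\|q^N-P_0\|\not\to0$ but $\pi^{(0)}_{s_r}(z_{r+1,r}(0))=P_0$ is matched correctly — a finite, if slightly delicate, computation. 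Once this uniform estimate is in hand, the rest of the proof is the formal disintegration argument sketched above.
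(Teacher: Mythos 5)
Your proposal follows essentially the same route as the paper: decompose $\pi^{(0)}$ via Dixmier's theorem into multiples of direct integrals, transport the measures through the Borel isomorphism $T_{q}$ using the $q$-independent field $\tau\mapsto\psi_{q}(\tau)$ built from the fixed unitaries $V_{w}$, and deduce norm convergence from a uniform-in-$(t,w)$ estimate $\bigl\|\pi^{(q)}_{t,w}(c_{k,j}(q)z_{k,j}(q))-\pi^{(0)}_{t,w}(z_{k,j}(0))\bigr\|<\varepsilon$ for $q$ small, which is exactly the paper's argument. Two cosmetic remarks: in fact $\|q^{N}-P_{0}\|=q\to 0$, so that generator also converges in norm (contrary to your aside), and the detour through fibrewise strong convergence plus dominated convergence is unnecessary, since the operator norm of a direct integral is the $\mu$-essential supremum of the fibre norms, so the uniform fibrewise estimate yields norm convergence directly.
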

\begin{proof}
     Let $\mu_{1},\mu_{2},...\mu_{\infty}$ be mutually singular positive measures on $\hat{A}_{0}$ corresponding to the following decomposition of $\pi^{(0)}$ (Theorem \ref{br2})
\begin{align*}
   \int_{\hat{A}_{0}}^{\bigoplus}\psi_{0}(\tau_{0}) d\mu_{1} \bigoplus \; 2 \int_{\hat{A}_{0}}^{\bigoplus}\psi_{0}(\tau_{0}) d\mu_{2} \bigoplus \cdots \bigoplus \; \aleph_{0}\int_{\hat{A}_{0}}^{\bigoplus}\psi_{0}(\tau_{0}) d\mu_{\infty}.
\end{align*}
For $q\in (0,1)$, define a representation $\pi^{(q)}$ of $A_{q}$ as follows,
\begin{align*}
    \pi^{(q)}:= \int_{\hat{A}_{0}}^{\bigoplus}\psi_{q}(T_{q}(\tau_{0})) d\mu_{1} \bigoplus \; 2 \int_{\hat{A}_{0}}^{\bigoplus}\psi_{q}(T_{q}(\tau_{0})) d\mu_{2} \bigoplus \cdots \bigoplus \; \aleph_{0}\int_{\hat{A}_{0}}^{\bigoplus}\psi_{q}(T_{q}(\tau_{0})) d\mu_{\infty}
\end{align*}
where $T_{q}: \hat{A}_{0}\to \hat{A}_{q}$ is the Borel isomorphism of Theorem \ref{iso1} discussed earlier. $\pi^{(q)}$ is well defined that follows from Theorem \ref{iso1} and Corollary \ref{exist}. 
Let $z_{k,j}(0)$ be a generator of $A_{0}$. For any $\varepsilon >0$, there is a natural number $m\in \mathbb{N}$ such that whenever $q< \frac{1}{m}$ we have,
\begin{center}
    $\left\|\pi^{(q)}_{t,w}(c_{k,j}(q)z_{k,j}(q))-\pi^{(0)}_{t,w}(z_{k,j}(0))\right\|< \varepsilon$
\end{center}
for all $t\in \mathbb{T}$ and for all reduced words $w$ and $c_{k,j}(q)=(-q)^{\min\{k-j, 0\}}$. Note that we have chosen the natural number $m$ independent of $t\in \mathbb{T}$ and reduced word $w$. This follows from the way $\pi^{(q)}_{t,w}$ is defined for all  $q\in [0,1)$. For any positive measure $\mu_{i}$ as above, we can write
\begin{align*}
    \int_{\hat{A}_{0}}^{\bigoplus}\psi_{0}(\tau_{0}) d\mu_{i}\cong \int_{\hat{A}_{0,1}}^{\bigoplus}\psi_{0}(\tau_{0}) d\mu_{i}\; \bigoplus \; \left(\bigoplus_{w\neq 1} \int_{\hat{A}_{0,w}}^{\bigoplus}\psi_{0}(\tau_{0}) d\mu_{i}\right).
\end{align*}
Similarly, we can break $\int_{\hat{A}_{0}}^{\bigoplus}\psi_{q}(T_{q}(\tau_{0})) d\mu_{i}$ using Proposition \ref{iso}. It is easy to see that 
\begin{align*}
    \left\|\int_{\hat{A}_{0,1}}^{\bigoplus}\psi_{q}(T_{q}(\tau_{0}))(c_{k,j}(q)z_{k,j}(q)) d\mu_{i}-\int_{\hat{A}_{0,1}}^{\bigoplus}\psi_{0}(\tau_{0})(z_{k.j}(0)) d\mu_{i}\right\|=0.
\end{align*}
Let $\tau_{0}=[V^{*}_{w}\pi^{(0)}_{t,w}( . ) V_{w}]\to x(\tau_{0})$ be a unit vector field i.e., $\displaystyle \int_{\hat{A}_{0,w}}\left\|x(\tau_{0})\right\|^{2}d\mu_{i}=1$. Then 
\begin{IEEEeqnarray*}{rCl}
    \IEEEeqnarraymulticol{3}{l}{\qquad\qquad 
    \left\|\left[ \int_{\hat{A}_{0,w}}^{\bigoplus}\psi_{q}(T_{q}(\tau_{0}))(c_{k,j}(q)z_{k,j}(q)) d\mu_{i}-\int_{\hat{A}_{0,w}}^{\bigoplus}\psi_{0}(\tau_{0})(z_{k,j}(0))d\mu_{i}\right](x(\tau_{0}))\right\|^{2} \qquad \qquad\qquad\qquad}\\
\qquad\qquad \qquad \qquad &=& \int_{\hat{A}_{0,w}}\left\|V^{*}_{w}\left[\pi^{(q)}_{t,w}(c_{k,j}(q)z_{k,j}(q))-\pi^{(0)}_{t,w}(z_{k,j}(0))\right]V_{w}x(\tau_{0})\right\|^{2}d\mu_{i} \\
& < & \varepsilon^{2}. \int_{\hat{A}_{0,w}}\left\|x(\tau_{0})\right\|^{2}d\mu_{i}\\ 
&< &\varepsilon^{2}.
\end{IEEEeqnarray*}
That means for any reduced word $w\neq 1$,
\begin{align*}
\left\|\int_{\hat{A}_{0,w}}^{\bigoplus}\psi_{q}(T_{q}(\tau_{0}))(c_{k,j}(q)z_{k,j}(q)) d\mu_{i}-\int_{\hat{A}_{0,w}}^{\bigoplus}\psi_{0}(\tau_{0})(z_{k,j}(0)) d\mu_{i}\right\|< \varepsilon
\end{align*}
whenever $q< \frac{1}{m}$.
Hence, we can write
\begin{align*}
    \left\|\int_{\hat{A}_{0}}^{\bigoplus}\psi_{q}(T_{q}(\tau_{0}))(c_{k,j}(q)z_{k,j}(q)) d\mu_{i}-\int_{\hat{A}_{0}}^{\bigoplus}\psi_{0}(\tau_{0})(z_{k,j}(0)) d\mu_{i}\right\|< \varepsilon.
\end{align*}
Since the above inequality is true for all positive measures $\mu_{i}$'s appearing in the decomposition of $\pi_{0}$, therefore for any $q< \frac{1}{m}$ we have
\begin{center}
    $\left\| \pi^{(q)}(c_{k,j}(q)z_{k,j}(q))-\pi^{(0)}(z_{k,j}(0))\right\|\leq \varepsilon$.
\end{center}
Non-degeneracy of $\pi^{(q)}$ follows from the fact that irreducible representations are non-degenerate and a direct integral of non-degenerate representations is again non-degenerate (8.1.4, \cite{Dix-1977}). This concludes the proof of the theorem.
\end{proof}
\noindent Our next goal is to \textit{start with a faithful representation $\pi^{(0)}$ and prove that we can choose $\pi^{(q)}$'s to be faithful}. For this, we need some further results. The very first step is to observe that $\hat{A}_{0,w_{L}}$ is an open dense subset of $\hat{A}_{0}$. For any $1\leq k\leq n$ we will write $w_{k}$ for the reduced word $(s_{1})(s_{2}s_{1})\cdots(s_{k-1}s_{k-2}\cdots s_{1})(s_{k}s_{k-1}\cdots s_{2})(s_{k+1}s_{k}\cdots s_{2}s_{1})\cdots (s_{n}s_{n-1}\cdots s_{2}s_{1})$. That is the generator $s_{1}$ is only missing at the $k$-th bracket.
\begin{prop}
\label{key}
\textit{
\begin{enumerate}
\item$(z^{ }_{n+1,1})(z^{ }_{n,1}z^{ }_{n+1,2})(z^{ }_{n-1,1}z^{ }_{n,2}z^{ }_{n+1,3})\cdots (z^{ }_{2,1}z^{ }_{3,2}z^{ }_{4,3}\cdots z^{ }_{n+1,n})\in Ker\,\pi^{(0)}_{t,w}$ for all $t\in \mathbb{T}$ and $w\neq w_{L}$. We will denote this element by $a$.
\item $\pi^{(0)}_{w_{L}}(a)$ is a nonzero operator that is an elementary tensor product of $P_{0}$.  Hence $\hat{A}_{0,w_{L}}$ is an open dense subset of $\hat{A}_{0}$.
\item $\pi^{(0)}_{w_{L}}$ applied to any bracket element of $a$ is again a positive operator consisting of an elementary tensor product of $P_{0}$ and $I$. 
\end{enumerate}}
\end{prop}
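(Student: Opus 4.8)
The plan is to reduce everything to explicit computations with the operators $\pi^{(0)}_{s_r}$ and the convolution formula for $\pi^{(0)}_w$, using the comultiplication $\Delta(z_{i,j}(0))=\sum_{k=\min\{i,j\}}^{\max\{i,j\}} z_{i,k}(0)\otimes z_{k,j}(0)$. The three parts are best handled together, since they are really one induction. First I would fix the particular reduced word for $w_L$ given in the paper, namely $w_L=s_{[1,1]}s_{[1,2]}\cdots s_{[1,n]}$ (reading brackets so that the $m$-th bracket is $s_{[1,m]}=s_m s_{m-1}\cdots s_1$), so that $\pi^{(0)}_{w_L}=\pi^{(0)}_{s_{[1,1]}}*\pi^{(0)}_{s_{[1,2]}}*\cdots*\pi^{(0)}_{s_{[1,n]}}$ acts on $\ell^2(\mathbb N)^{\otimes l(w_L)}$, with the $m$-th bracket occupying $m$ tensor legs. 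The key book-keeping observation is that the $m$-th bracket element of $a$, namely $z^{}_{n+2-m,1}z^{}_{n+3-m,2}\cdots z^{}_{n+1,m}$, is designed precisely so that under $\Delta$ iterated across the $m$ legs of $\pi^{(0)}_{s_{[1,m]}}$ only the ``staircase'' term survives: each factor $z_{i,i-1}(0)$ or $z_{i,i}(0)$ or $z_{i,i+1}(0)$ in the relevant sub-word is sent by some $\pi^{(0)}_{s_r}$ to one of $S$, $S^*$, $P_0$, and for this particular product the matrix-unit bookkeeping forces the output on each leg to be exactly $P_0=I-S^*S$.

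For part (3), I would prove by a downward/upward induction on the bracket index $m$ that $\pi^{(0)}_{w_L}$ applied to the $m$-th bracket element of $a$ is a tensor product of $P_0$'s on the $m$ legs belonging to that bracket and $I$'s elsewhere; this uses only that $S^*S$, $SS^*$, $P_0=I-S^*S$ and $I$ are the possible images of diagonal and super/sub-diagonal generators under a single $\pi^{(0)}_{s_r}$, that $P_0$ is a projection, and that the comultiplication expansion of a bracket element, after pairing against $\pi^{(0)}_{s_{[1,m]}}=\pi^{(0)}_{s_m}*\cdots*\pi^{(0)}_{s_1}$, telescopes because $z_{i,i}(0)\mapsto S$ and $z_{i+1,i}(0)\mapsto P_0$ while $P_0 S = P_0$ and $S^*P_0=0$, killing all cross terms. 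Positivity is then immediate since $P_0\geq 0$ and $I\geq 0$, and part (2)'s claim that $\pi^{(0)}_{w_L}(a)$ is a nonzero elementary tensor of $P_0$'s follows by multiplying the bracket images: the legs line up so that each leg receives exactly one $P_0$ (and the product is the rank-one projection onto $e_0\otimes\cdots\otimes e_0$), in particular nonzero.

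For part (1), I would show $a\in \mathrm{Ker}\,\pi^{(0)}_{t,w}$ for $w\neq w_L$ by a dimension/length argument combined with the factorization principle already established: by Theorem~\ref{Bfactor}, $w\leq w_L$ always, and for $w\neq w_L$ one has $l(w)<l(w_L)$, so a reduced expression for $w$ is obtained by deleting at least one Coxeter generator from the chosen reduced expression for $w_L$; the induced evaluation $ev:\mathscr T^{l(w_L)}\to \mathscr T^{l(w)}$ sends $S\mapsto 1$ on the deleted legs. Tracking $a$ through $ev\circ\pi^{(q)}_{t,w_L}=\pi^{(q)}_{t,w}$ (at $q=0$, with the appropriate scaling that turns $z_{i,j}(0)$ into limits of scaled $z_{i,j}(q)$), a deleted leg that in the $w_L$-computation carried a $P_0=I-S^*S$ now carries $1-1=0$, so the whole product vanishes; one must check that \emph{every} reduced subword obtained by a single deletion does hit at least one such leg in the support of $a$, which follows from the explicit staircase shape of $a$ (its $m$-th bracket occupies consecutive legs $1,\dots,m$ of the $m$-th block, and every proper Bruhat subword misses at least one of these). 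The character part contributes only unimodular scalars on diagonal generators, hence does not affect vanishing. The main obstacle I anticipate is precisely this last combinatorial verification — that the specific element $a$ is annihilated by \emph{all} $\pi^{(0)}_{t,w}$ with $w\neq w_L$ and not merely by those missing one particular generator — which will require carefully matching the matrix-unit indices in each bracket of $a$ against the legs that survive in an arbitrary reduced subword of $w_L$; the $w_k$ notation introduced just before the proposition suggests the authors reduce to checking the ``codimension-one'' cases $w=w_k$ and then invoke $\mathrm{Ker}\,\pi^{(0)}_{t,w_k}\subseteq\mathrm{Ker}\,\pi^{(0)}_{t,w}$ for $w\leq w_k$ via Theorem~\ref{Bfactor}, so I would structure the argument that way.
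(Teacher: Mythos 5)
Your overall strategy matches the paper's: parts (2) and (3) are done there by direct computation of $\pi^{(0)}_{w_L}$ on the bracket elements via the iterated comultiplication, and part (1) by reducing to the co-atoms $w_{1},\dots,w_{n}$ of the Bruhat order below $w_{L}$ and invoking Theorem \ref{Bfactor} (the paper checks that the $k$-th bracket factor of $a$ lies in $\mathrm{Ker}\,\pi^{(0)}_{t,w_{k}}$; since kernels are ideals and the $w_{k}$ exhaust the maximal proper Bruhat subwords of $w_{L}$, $a\in \mathrm{Ker}\,\pi^{(0)}_{t,w}$ for every $w\neq w_{L}$). Three remarks. First, the operator identities you quote are wrong under the paper's convention that $S$ is the backward shift: $P_{0}S\neq P_{0}$ and $S^{*}P_{0}\neq 0$; the identities that actually kill the cross terms are $SP_{0}=0$, $P_{0}S^{*}=0$ and $SS^{*}=I$ (for $n=2$, e.g., $\pi^{(0)}_{w_L}(z_{2,1}z_{3,2})=P_{0}\otimes P_{0}\otimes SS^{*}+S^{*}\otimes SP_{0}\otimes P_{0}S^{*}=P_{0}\otimes P_{0}\otimes I$). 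This is a local slip, not a structural one. Second, the combinatorial worry at the end of your part (1) is unfounded, and your $ev$-based argument is in fact cleaner than the paper's: once part (2) gives $\pi^{(0)}_{w_L}(a)=P_{0}^{\otimes l(w_L)}$ with a $P_{0}$ on \emph{every} leg, any $w\neq w_{L}$ is reached by deleting at least one letter, $ev$ sends $P_{0}=I-S^{*}S\mapsto 0$ on each deleted leg, and $\pi^{(0)}_{t,w}(a)=ev\bigl(\pi^{(0)}_{t,w_L}(a)\bigr)=0$ with no case analysis over the $w_{k}$ needed (the character contributes only a unimodular scalar, as you note). Third, you do not address the last clause of part (2), that $\hat{A}_{0,w_{L}}$ is open (and dense) in $\hat{A}_{0}$; the paper handles openness by observing that the complement is precisely the set of classes of irreducibles whose kernels contain $a$, hence closed in the Jacobson topology, with density supplied by Corollary \ref{density}. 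Apart from these points the proposal is sound and follows the paper's route.
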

\begin{proof}
1. From the relations of $A_{0}(n)$ (see $\cite{GirPal-2023}$) it follows that $z^{*}_{1,n+1}= (z^{ }_{2,1}z^{ }_{3,2}z^{ }_{4,3}\cdots z^{ }_{n+1,n})\in Ker\,\pi^{(0)}_{t,w_{1}}$ with $w_{1}=(s_{2}s_{1})(s_{3}s_{2}s_{1})\cdots (s_{n}s_{n-1}\cdots s_{1})$. Similarly, by direct computation we can prove that $z^{ }_{3,1}z^{ }_{4,2}\cdots z^{ }_{n+1,n-1}\in Ker\, \pi^{(0)}_{t,w_{2}}$ with $w_{2}= (s_{1})(s_{2})(s_{3}s_{2}s_{1})\cdots(s_{n}s_{n-1}\cdots s_{1})$ so on and finally $z^{ }_{n+1,1}\in Ker\, \pi^{(0)}_{t,w_{n}}$ with $ w_{n}=(s_{1})(s_{2}s_{1})(s_{3}s_{2}s_{1})\cdots (s_{n}s_{n-1}\cdots s_{2})$. Since $\{w_{1},w_{2},\cdots w_{n}\}$ exhausts the list of all maximal Bruhat subwords of $w_{L}$, the rest of the proof follows from Theorem $\ref{Bfactor}$. The proof of both part 2 and part 3 follows from direct calculations. To prove $\hat{A}_{0.w_{L}}$ is open observe that its complement in $\hat{A}_{0}$ is closed which follows easily from the description of closed sets under the Jacobson topology.  
\end{proof} 
\noindent Let $\pi^{(0)}$ be a non-degenerate representation of $A_{0}$ on a separable Hilbert space. For the sake of simplicity, let us further assume $\pi^{(0)}$ is just a single direct integral of irreducibles over $\hat{A}_{0}$ with respect to some non-negative Borel measure $\mu$. The general case follows similarly with slight modifications. The next theorem helps us to filter out the minimal faithful part of a faithful representation of $A_{0}$.
\begin{thm}
\textit{ $\pi^{(0)}$ is faithful if and only if $Q_{L}:= \displaystyle \int_{\hat{A}_{0,w_{L}}}^{\bigoplus} V^{*}_{w_{L}}\pi^{(0)}_{t,w_{L}}( . )V_{w_{L}} d\mu$ is faithful.}
\label{faith}
\end{thm}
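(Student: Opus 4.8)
The plan is to recast both occurrences of ``faithful'' in terms of a single closed ideal of $A_{0}$ and then exploit that this ideal is essential; throughout I stay in the simplified setting of the paragraph preceding the theorem, where $\pi^{(0)}=\int_{\hat{A}_{0}}^{\bigoplus}\psi_{0}(\tau_{0})\,d\mu$. By Proposition~\ref{key}(2) the set $\hat{A}_{0,w_{L}}$ is open in $\hat{A}_{0}$, so under the inclusion-preserving bijection between open subsets of $\hat{A}_{0}$ and closed two-sided ideals of $A_{0}$ it corresponds to a closed ideal $I_{L}$ of $A_{0}$ with $\widehat{I_{L}}=\hat{A}_{0,w_{L}}$; concretely, an irreducible representation $\pi$ of $A_{0}$ satisfies $\pi(I_{L})=0$ exactly when $[\pi]\notin\hat{A}_{0,w_{L}}$. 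The first step is to note that $I_{L}$ is an \emph{essential} ideal of $A_{0}$: using the identity $\widehat{I\cap J}=\widehat{I}\cap\widehat{J}$ (valid for any two closed ideals, by irreducibility of the representations in the spectrum), density of $\widehat{I_{L}}=\hat{A}_{0,w_{L}}$ in $\hat{A}_{0}$ (Corollary~\ref{density}) forces $I_{L}\cap J\neq\{0\}$ for every nonzero closed ideal $J$ of $A_{0}$. The element $a$ of Proposition~\ref{key}(1)--(2) exhibits one explicit nonzero member of $I_{L}$.

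Next I would split $\pi^{(0)}$ along $\hat{A}_{0}=\hat{A}_{0,w_{L}}\sqcup(\hat{A}_{0}\setminus\hat{A}_{0,w_{L}})$, writing $\pi^{(0)}\cong Q_{L}\oplus Q'$ with $Q'=\int_{\hat{A}_{0}\setminus\hat{A}_{0,w_{L}}}^{\bigoplus}\psi_{0}(\tau_{0})\,d\mu$ the direct integral over the complement (which is the one-dimensional part $\hat{A}_{0,1}$ together with all $\hat{A}_{0,w}$, $w\neq 1,w_{L}$). Every irreducible occurring in $Q'$ has class outside $\hat{A}_{0,w_{L}}=\widehat{I_{L}}$, hence annihilates $I_{L}$; therefore $I_{L}\subseteq\ker Q'$, and consequently
\[
\ker\pi^{(0)}=\ker Q_{L}\cap\ker Q'\ \supseteq\ \ker Q_{L}\cap I_{L}.
\]
Both directions are now immediate. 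If $Q_{L}$ is faithful, then $\ker\pi^{(0)}\subseteq\ker Q_{L}=\{0\}$, so $\pi^{(0)}$ is faithful. For the converse I argue by contraposition: if $Q_{L}$ is not faithful, then $\ker Q_{L}$ is a nonzero closed ideal of $A_{0}$, so essentiality of $I_{L}$ gives $\ker Q_{L}\cap I_{L}\neq\{0\}$, and the displayed inclusion then yields $\ker\pi^{(0)}\neq\{0\}$, i.e.\ $\pi^{(0)}$ is not faithful. For the general multiplicity case of Theorem~\ref{br2} one carries the direct sum $\bigoplus_{m\le\aleph_{0}}m\cdot\int_{\hat{A}_{0}}^{\bigoplus}\psi_{0}\,d\mu_{m}$ through verbatim, since each summand again splits into a part integrated over $\hat{A}_{0,w_{L}}$ and a part annihilating $I_{L}$.

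The splitting of direct integrals and the bookkeeping with kernels of ideals are routine and of the same flavour as the arguments of Section~3. The step that needs genuine care -- and where I expect the main difficulty -- is the first paragraph: one must verify that the open set $\hat{A}_{0,w_{L}}$ really does correspond to a closed ideal whose spectrum is \emph{exactly} $\hat{A}_{0,w_{L}}$ (so that ``$[\pi]\notin\hat{A}_{0,w_{L}}$'' forces $\pi(I_{L})=0$, and not merely the reverse implication), and that density of $\hat{A}_{0,w_{L}}$ is equivalent to essentiality of $I_{L}$. Both facts rest on the identity $\widehat{I\cap J}=\widehat{I}\cap\widehat{J}$, whose nontrivial inclusion uses that a nonzero ideal acts non-degenerately in any irreducible representation that does not annihilate it.
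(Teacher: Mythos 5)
Your argument is correct, but it takes a genuinely different route from the paper's. The paper proves the nontrivial direction ($\pi^{(0)}$ faithful $\Rightarrow$ $Q_{L}$ faithful) by explicit computation: given $z$ with $Q_{L}(z)=0$, it uses the type I property to produce, for each $s\in\mathbb{T}$ and each basis vector, an element $y$ with $\pi^{(0)}_{s,w_{L}}(y)=(S^{*})^{m_{1}}P_{0}\otimes\cdots\otimes(S^{*})^{m_{n(n+1)/2}}P_{0}$, observes that $zya$ (with $a$ the explicit element of Proposition \ref{key}) lies in $\ker\pi^{(0)}$ because $a$ annihilates every irreducible off $\hat{A}_{0,w_{L}}$ while $Q_{L}(z)=0$ kills the remaining fibres $\mu$-a.e., concludes $zya=0$ by faithfulness, and then reads off $\pi^{(0)}_{s,w_{L}}(z)e_{m_{1}}\otimes\cdots\otimes e_{m_{n(n+1)/2}}=0$ for \emph{every} $s$ from the explicit form of $\pi^{(0)}_{w_{L}}(a)$. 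You replace all of this by the hull--kernel correspondence: the open set $\hat{A}_{0,w_{L}}$ (Proposition \ref{key}, part 2) corresponds to a closed ideal $I_{L}$ that is essential because $\hat{A}_{0,w_{L}}$ is dense (Corollary \ref{density}); since every irreducible occurring in $Q'$ annihilates $I_{L}$, you get $\ker Q_{L}\cap I_{L}\subseteq\ker\pi^{(0)}$, and contraposition finishes. The two points you flag as delicate (that $\widehat{I_{L}}$ is \emph{exactly} $\hat{A}_{0,w_{L}}$, and that density of $\widehat{I_{L}}$ is equivalent to essentiality) are indeed the load-bearing steps, but both are standard consequences of the Jacobson topology and of $\widehat{I\cap J}=\widehat{I}\cap\widehat{J}$, and your sketches of them are sound. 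Your version is more structural: it dispenses with the type I property and the concrete operator computations, needing Proposition \ref{key} only through the openness of $\hat{A}_{0,w_{L}}$. A concrete benefit is that the same argument applies verbatim to $A_{q}$ for $q\neq 0$ (where openness and density of $\hat{A}_{q,w_{L}}$ are known from \cite{NesTus-2012}), whereas the paper reruns a separate compact-operator argument inside the proof of Theorem \ref{main2}. What the paper's hands-on proof buys in exchange is an explicit witness: it identifies the concrete element $zya$ whose vanishing detects non-faithfulness, which is in the spirit of the rest of Section 4.
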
 
\begin{proof}
 Suppose that $\pi^{(0)}$ is faithful. Then using part 1 of Proposition \ref{key} it follows that $\mu(\hat{A}_{0,w_{L}})>0$. Consider an element $z$ such that $Q_{L}(z)=0$. For showing $z=0$, it suffices to prove that for any $s\in \mathbb{T}$, $\pi^{(0)}_{s,w_{L}}(z)(e_{m_{1}}\otimes e_{m_{2}}\otimes \cdots \otimes e_{m_{n(n+1)/2}})=0$. Since $A_{0}$ is of type I therefore we have an element $y\in A_{0}$ such that $\pi^{(0)}_{s,w_{L}}(y)= (S^{*})^{m_{1}}P_{0}\otimes \cdots \otimes (S^{*})^{m_{n(n+1)/2}}P_{0}$. Now, observe that $zya \in Ker\, \pi^{(0)}$. Since $\pi^{(0)}$ is faithful, one has $zya=0$. Now use part 2 of the Proposition \ref{key} to conclude $\pi^{(0)}_{s,w_{L}}(z)(e_{m_{1}}\otimes e_{m_{2}}\otimes \cdots \otimes e_{m_{n(n+1)/2}})=0$. Converse is trivial. 
\end{proof}
\noindent  For a separable $C^{*}$-algebra $A$, it follows from Proposition 3.3.4 of $\cite{Dix-1977}$ that the topology of the spectrum $\hat{A}$ has a countable base. Therefore, for any positive Borel measure $\mu$ defined on the Borel $\sigma$-algebra generated by the topology of $\hat{A}$, the support of $\mu$ (the smallest closed set whose complement is negligible) is well defined. Alternately, support can be described by the set of all $\rho \in \hat{A}$ with the property that any open neighborhood of $\rho$ has a positive measure. The next proposition captures the faithfulness of a direct integral of irreducible representations in terms of the support of the underlying positive Borel measure.
\begin{prop}
\label{support}
\textit{ Let $A$ be any separable type I $C^{*}$-algebra and $\hat{A}$ be the spectrum of $A$. Then any non-degenerate representation $\pi:= \displaystyle \int_{\hat{A}}^{\bigoplus}\psi(\tau) d\mu$ (see Theorem \ref{br1}) is faithful if and only if supp$(\mu)=\hat{A}$. }
\end{prop}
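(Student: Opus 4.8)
The plan is to prove both implications by exploiting the relationship between the kernel of a direct integral representation and the support of the measure. First I would recall the basic fact, valid for any separable type I $C^{*}$-algebra, that $\mathrm{Ker}\,\pi = \bigcap_{\tau \in \mathrm{supp}(\mu)} \mathrm{Ker}\,\psi(\tau)$; this is essentially the content of the disintegration theory together with the fact that the kernel of a direct integral is the intersection of the kernels taken $\mu$-almost everywhere, and since the map $\tau \mapsto \mathrm{Ker}\,\psi(\tau)$ is continuous into $\mathrm{Prim}(A)$ (identified with $\hat A$), the $\mu$-essential intersection coincides with the intersection over the closed set $\mathrm{supp}(\mu)$. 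I would cite the relevant parts of Chapter 8 of \cite{Dix-1977} (in particular 8.1.4 and the surrounding material on kernels of direct integrals) rather than reprove this.

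Granting that, the $(\Leftarrow)$ direction is immediate: if $\mathrm{supp}(\mu) = \hat A$, then $\mathrm{Ker}\,\pi = \bigcap_{\tau \in \hat A}\mathrm{Ker}\,\psi(\tau) = \bigcap_{\tau \in \hat A}\mathrm{Ker}\,\tau = \{0\}$, because no nonzero element of a $C^{*}$-algebra can lie in the kernel of every irreducible representation (the same fact already used in the proof of Corollary \ref{density}). For the $(\Rightarrow)$ direction, suppose $\mathrm{supp}(\mu) \neq \hat A$. Then the complement $U := \hat A \setminus \mathrm{supp}(\mu)$ is a nonempty open subset of $\hat A$ with $\mu(U) = 0$. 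I want to produce a nonzero $a \in A$ that vanishes under every $\psi(\tau)$ with $\tau \in \mathrm{supp}(\mu)$. For this I would use that $U$, being a nonempty open subset of the spectrum of a type I $C^{*}$-algebra, corresponds to a nonzero (closed two-sided) ideal $I_U \subseteq A$ — concretely, $I_U = \bigcap\{\,\mathfrak p \in \mathrm{Prim}(A) : \mathfrak p \notin U\,\}$, which is nonzero precisely because $U \neq \varnothing$ (the closed set $\hat A \setminus U = \mathrm{supp}(\mu)$ is not all of $\hat A$, hence does not correspond to the zero ideal). Pick any nonzero $a \in I_U$. Then for every $\tau \in \mathrm{supp}(\mu)$ we have $\tau \notin U$, so $\psi(\tau) \in \tau$ kills $I_U$ and in particular $\psi(\tau)(a) = 0$. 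Since $\tau \mapsto \psi(\tau)(a)$ vanishes on all of $\mathrm{supp}(\mu)$, it vanishes $\mu$-a.e., so $\pi(a) = \int_{\hat A}^{\bigoplus}\psi(\tau)(a)\,d\mu = 0$ with $a \neq 0$; thus $\pi$ is not faithful.

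The main obstacle I anticipate is making the identification "kernel of the direct integral $=$ intersection of kernels over $\mathrm{supp}(\mu)$" fully rigorous, since a priori one only gets the intersection over a set of full measure, and one must use the continuity/openness properties of the canonical map $Irr(A) \to \hat A \to \mathrm{Prim}(A)$ (Lemma \ref{polish}, and 3.5.8, 3.7.4 of \cite{Dix-1977}) to upgrade this to the closed set $\mathrm{supp}(\mu)$: if $\tau_0 \in \mathrm{supp}(\mu)$ then every neighborhood of $\tau_0$ has positive measure, so any $a$ with $\pi(a)=0$ has $\psi(\tau)(a)=0$ on a dense subset of each such neighborhood, and lower semicontinuity of $\tau \mapsto \|\psi(\tau)(a)\|$ forces $\psi(\tau_0)(a) = 0$. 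Everything else is bookkeeping with the ideal–open set correspondence for type I algebras, which is standard (see 3.2 and 4.1.10 of \cite{Dix-1977}).
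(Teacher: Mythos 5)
Your proposal is correct and follows essentially the same route as the paper: the forward direction uses the ideal--open-set correspondence to produce a nonzero element annihilated by every $\psi(\tau)$ with $\tau\in\operatorname{supp}(\mu)$, and the converse uses that $\{\tau:\psi(\tau)(x)\neq 0\}$ is a nonempty open $\mu$-null set (via lower semicontinuity of $\tau\mapsto\|\tau(x)\|$), contradicting full support. Packaging this as the identity $\operatorname{Ker}\pi=\bigcap_{\tau\in\operatorname{supp}(\mu)}\operatorname{Ker}\psi(\tau)$ is a harmless reorganization of the same argument.
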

\begin{proof}
    Assume $\pi$ is faithful. Suppose there is an irreducible representation $\rho \not\in$ supp$(\mu)$. As supp$(\mu)$ is a closed subset of $\hat{A}$, using the definition of Jacobson topology one can find a non-zero element $x\in A$ such that $\sigma(x)=0$ for all $\sigma \in$ supp$(\mu)$ and $\rho(x)\neq 0$. But this implies $\pi(x)=0$ which contradicts the faithfulness of $\pi$.\\
    Conversely, let us assume $\mu$ has full support but $\pi$ is not faithful. Then there is a non-zero element $x\in A$ such that $\pi(x)=0$. Consider the following subset $U:=\{ \tau \in \hat{A}:\space \psi(\tau)(x)\neq 0\}$ of $\hat{A}$. $U$ is non-empty because an irreducible representation sends $x$ to a non-zero operator as $x$ is itself non-zero. Also observe that $U$ is open in $\hat{A}$ and $\mu(U)=0$. But this contradicts our assumption $\mu$ has full support. 
\end{proof}
\begin{cor}
\textit{Any non-degenerate representation $\pi$ with decomposition as in Theorem $\ref{br2}$ is faithful if and only if supp$(\sum_{i}\mu_{i})=\hat{A}$. }
\end{cor}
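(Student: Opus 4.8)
The plan is to deduce the corollary from Proposition \ref{support} applied to the single measure $\nu := \sum_{i}\mu_{i}$. Since $\hat{A}$ has a countable base (Proposition 3.3.4 of \cite{Dix-1977}), $\nu$ is again a positive Borel measure on $\hat{A}$ with a well-defined support, and, crucially, a Borel set is $\nu$-negligible if and only if it is $\mu_{i}$-negligible for every $i$. I would not attempt to identify $\pi$ literally with $\int_{\hat{A}}^{\bigoplus}\psi(\tau)\,d\nu$ as representations (in particular $\nu$ need not be finite), only to show that the two have the same kernel.

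The first step is to note that multiplicities do not affect kernels: from the decomposition of $\pi$ in Theorem \ref{br2} one has $Ker\,\pi = \bigcap_{i} Ker\big(\int_{\hat{A}}^{\bigoplus}\psi(\tau)\,d\mu_{i}\big)$. The second step is to run, for a fixed $x\in A$, the argument from the proof of Proposition \ref{support}: the set $U_{x}:=\{\tau\in\hat{A}: \psi(\tau)(x)\neq 0\}$ is open, and $x$ lies in the kernel of $\int_{\hat{A}}^{\bigoplus}\psi(\tau)\,d\mu_{i}$ precisely when $\mu_{i}(U_{x})=0$. Combining the two steps, $x\in Ker\,\pi$ if and only if $\mu_{i}(U_{x})=0$ for all $i$, i.e. if and only if $\nu(U_{x})=0$; but the latter is exactly the condition characterizing $Ker\big(\int_{\hat{A}}^{\bigoplus}\psi(\tau)\,d\nu\big)$. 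Hence $\pi$ and $\int_{\hat{A}}^{\bigoplus}\psi(\tau)\,d\nu$ have the same kernel, so $\pi$ is faithful if and only if $\int_{\hat{A}}^{\bigoplus}\psi(\tau)\,d\nu$ is, and Proposition \ref{support} gives that this holds if and only if supp$(\nu)=\hat{A}$, which is the assertion.

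There is no serious obstacle here; the argument is essentially bookkeeping around Proposition \ref{support}. The only points deserving care are that $\nu$ may fail to be finite, which is why one should compare kernels rather than representations, and that one must keep invoking the second-countability of $\hat{A}$ to guarantee that supp$(\nu)$ behaves as expected for this possibly infinite measure --- the same input that was already used to make Proposition \ref{support} meaningful. Should one prefer to avoid forming $\nu$ altogether, an entirely parallel direct proof works: faithfulness of $\pi$ forces every nonempty open $U_{x}$ to meet some $\mu_{i}$ in positive measure (hence supp$(\sum_i\mu_i)=\hat{A}$), and conversely an $x$ with $\pi(x)=0$ produces, via the Jacobson-topology separation used in Proposition \ref{support}, an irreducible representation outside supp$(\sum_i\mu_i)$.
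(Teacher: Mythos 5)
Your argument is correct and follows essentially the same route as the paper, which simply notes that $\mathrm{supp}(\mu_{i})\subseteq\mathrm{supp}(\sum_{j}\mu_{j})$ for each $i$ and then reruns the argument of Proposition \ref{support} with the family of measures; your kernel-comparison with $\nu=\sum_{i}\mu_{i}$ (and your fallback direct argument) is just a more carefully written version of that reduction. The extra attention to the possible non-finiteness of $\nu$ and to the fact that multiplicities do not affect kernels is sound but not a different method.
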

\begin{proof}
Note that for each $i$, supp$(\mu_{i})\subseteq$ supp$(\sum_{j}\mu_{j})$ and the rest of the proof follows similarly as above.  
\end{proof}
\noindent 
For $q\in (0,1)$, the topology on the spectrum of $C(K_{q})$ was studied in great detail by Neshveyev \& Tuset for any simply connected semisimple compact Lie group $K$ with complexified Lie algebra $g$ in Section 4 of $\cite{NesTus-2012}$. They described this topology using the Bruhat order on the Weyl group of $g$ and certain subsets of the maximal torus of $K$. Currently, it is not clear to us whether $\hat{A}_{0}$ and $\hat{A}_{q}$ are homeomorphic, but for our subsequent results on the faithfulness of the representations of $A_{q}$ for different values of $q$, it suffices to show that the restriction of the natural Borel isomorphism $T_{q}$ to the dense open subset $\hat{A}_{0,w_{L}}$ is indeed a homeomorphism between $\hat{A}_{0,w_{L}}$ and $\hat{A}_{q,w_{L}}$. The following lemma is an easy consequence of Theorem 4.1 in $\cite{NesTus-2012}$.
\begin{lema}
\label{top}
\textit{Let $q\in (0,1)$ and $\Omega \subseteq \mathbb{T}$ and $w_{L}$ be the longest word. For any $t\in\mathbb{T}$, the kernal of the representation $\pi^{(q)}_{t,w_{L}}$ contains the intersection of the kernals of the representations $\pi^{(q)}_{s,w_{L}}$ of $A_{q}(n)$ for each $s\in \Omega$ if and only if $t\in \overline{\Omega}$. Therefore, a subset $M_{\Omega}:=\{[V_{w_{L}}^{*}\pi^{(q)}_{s,w_{L}}(.)V_{w_{L}}]: s\in \Omega \}$ of $\hat{A}_{q,w_{L}}$ is closed in $\hat{A}_{q,w_{L}}$ if and only if $\Omega$ is closed.}
\end{lema}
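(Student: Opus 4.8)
The plan is to reduce the lemma to the explicit description of the Jacobson topology on $Prim(A_{q})$ — equivalently on $\hat{A}_{q}$, which is homeomorphic to $Prim(A_{q})$ since $A_{q}$ is separable and of type I — obtained by Neshveyev \& Tuset in Theorem 4.1 of \cite{NesTus-2012}. In their parametrisation the irreducibles of $C(K_{q})$ are indexed by pairs $(t,w)\in\mathbb{T}\times\mathcal{S}_{n+1}$ matching our $\pi^{(q)}_{t,w}$, and Theorem 4.1 determines, for a subset $X$ of this index set, exactly which pairs $(t,w)$ lie in $\overline{X}$, the criterion being phrased in terms of the Bruhat order on $\mathcal{S}_{n+1}$ together with the closure operation in $\mathbb{T}$ applied to suitable torus components. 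As a preliminary step I would record the routine translation coming from the definition of the topology $\tau$ as the pullback of the Jacobson topology (Section 3): for $t\in\mathbb{T}$, the point $[V^{*}_{w_{L}}\pi^{(q)}_{t,w_{L}}(\cdot)V_{w_{L}}]$ lies in the closure of $M_{\Omega}$ in $\hat{A}_{q}$ if and only if $\bigcap_{s\in\Omega}Ker\,\pi^{(q)}_{s,w_{L}}\subseteq Ker\,\pi^{(q)}_{t,w_{L}}$. So the first assertion of the lemma is exactly the statement that $[\pi^{(q)}_{t,w_{L}}]\in\overline{M_{\Omega}}$ if and only if $t\in\overline{\Omega}$.

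One implication is easy. If $t\in\overline{\Omega}$, choose (using that $\mathbb{T}$ is metrisable) a sequence $t_{k}\in\Omega$ with $t_{k}\to t$; by Lemma \ref{lema2} the assignment $s\mapsto V^{*}_{w_{L}}\pi^{(q)}_{s,w_{L}}(\cdot)V_{w_{L}}$ is continuous for the topology of strong pointwise convergence, so $\pi^{(q)}_{t_{k},w_{L}}(x)\to\pi^{(q)}_{t,w_{L}}(x)$ strongly for each $x\in A_{q}$, whence any $x$ killed by all $\pi^{(q)}_{s,w_{L}}$, $s\in\Omega$, is also killed by $\pi^{(q)}_{t,w_{L}}$. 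The substantive implication is the converse, and here I would apply Theorem 4.1 of \cite{NesTus-2012} to $X=\{(s,w_{L}):s\in\Omega\}$: because $w_{L}$ is the unique maximal element of $\mathcal{S}_{n+1}$, the Bruhat part of their closure criterion imposes no constraint on a point whose Weyl component is $w_{L}$, and the surviving torus condition reduces to $t\in\overline{\Omega}$. Granting this, $[\pi^{(q)}_{t,w_{L}}]\in\overline{M_{\Omega}}$ forces $t\in\overline{\Omega}$, which finishes the first half of the lemma.

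For the second half, $M_{\Omega}$ is closed in the subspace $\hat{A}_{q,w_{L}}$ if and only if $\overline{M_{\Omega}}\cap\hat{A}_{q,w_{L}}=M_{\Omega}$; by the equivalence just proved the left-hand side equals $\{[\pi^{(q)}_{t,w_{L}}]:t\in\overline{\Omega}\}$, and since the representations $\pi^{(q)}_{t,w_{L}}$ are mutually inequivalent for distinct $t$ (Theorem \ref{muq}) this coincides with $M_{\Omega}$ precisely when $\overline{\Omega}=\Omega$, i.e. when $\Omega$ is closed. The main obstacle is the single substantive point above: extracting from the somewhat involved statement of Neshveyev--Tuset's Theorem 4.1 the fact that, once every Weyl component of $X$ is the longest element $w_{L}$, the torus-side condition controlling membership of a top-stratum point in $\overline{X}$ degenerates to ordinary closure of $\Omega$ in $\mathbb{T}$ — concretely, that the auxiliary subgroups of $\mathbb{T}$ entering their formula are trivial for the pair $(w_{L},w_{L})$. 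The remainder is routine bookkeeping with the Jacobson topology and with the parametrisation of $\hat{A}_{q}$ set up in Sections 2 and 3.
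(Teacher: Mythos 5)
Your proposal is correct and follows essentially the same route as the paper, which simply records the lemma as an easy consequence of Theorem 4.1 of \cite{NesTus-2012} after translating through the Jacobson topology; your write-up just supplies the bookkeeping the paper leaves implicit. The only detail you gloss over is the one the paper flags in its remark: Neshveyev--Tuset index irreducibles by $\pi^{(q)}_{w}*\chi_{t}$ rather than $\chi_{t}*\pi^{(q)}_{w}$, so one must use the equivalence $\pi^{(q)}_{w_{L}}*\chi_{t}\cong\chi_{s}*\pi^{(q)}_{w_{L}}$ with $s=(\overline{t_{n}},\ldots,\overline{t_{1}})$, which is harmless since $t\mapsto s$ is a homeomorphism of $\mathbb{T}$.
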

\begin{rem}
    Note that in \cite{NesTus-2012}, the authors have used the opposite indexing for the family of irreducible representations of $A_{q}(n)$, that is, each irreducible representation is denoted by $\pi^{(q)}_{w,t}:= \pi^{(q)}_{w}*\chi_{t}$ for a reduced word $w$ and $t=(t_{1}.t_{2},\cdots,t_{n})\in \mathbb{T}$. Therefore, we need to keep in mind the equivalence $\pi^{(q)}_{w_{L}}*\chi_{t}\cong \chi_{s}*\pi^{(q)}_{w_{L}}$ where $s=(\overline{t_{n}},\overline{t_{n-1}},\cdots, \overline{t_{1}})\in \mathbb{T}$ to prove the above result. 
\end{rem}
\noindent We now proceed to prove that the restriction of $T_{q}$ on $\hat{A}_{0,w_{L}}$ is a homeomorphism. It is already known that the natural map from $\mathbb{T} \to \hat{A}_{q,w_{L}}$ is a continuous bijection for $q\in [0,1)$ (see Lemma \ref{polish}). Therefore, our required homeomorphism can be achieved if we can prove $\hat{A}_{0,w_{L}}$ is Hausdorff because homeomorphism will then automatically follow as for $q\neq 0$, it is easy to see from Lemma $\ref{top}$ that $\hat{A}_{q,w_{L}}$ is homeomorphic with $\mathbb{T}$. The map $\rho: Irr_{\infty}(A_{0})\to \hat{A}_{0,\infty}$ is open continuous onto map (see Lemma \ref{polish}). Observe that the following set $Irr_{w_{L},\infty}(A_{0}):=\{ W^{*}\pi^{(0)}_{t,w_{L}}( . )W:\, t\in \mathbb{T} ,\,\text{W: $l^{2}(\mathbb{N})\to l^{2}(\mathbb{N})^{\otimes l(w_{L})}$ is unitary } \} = \rho^{-1}(\hat{A}_{0,w_{L}})$ is a saturated open set. Therefore, $\rho|_{res}: Irr_{w_{L},\infty}(A_{0})\to \hat{A}_{0,w_{L}}$ is again an open quotient map defined on a Hausdorff space.
\begin{lema}
\textit{Let $R$ be an equivalence relation on a Hausdorff topological space $X$ and $p: X \to X/R$ is an open quotient map. If the relation $R$ is a closed relation that is $\{(x,y): xRy\}\subseteq X\times X$ is closed, $X/R$ is Hausdorff.}
 \end{lema}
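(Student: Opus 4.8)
The plan is to run the classical point-set argument that a quotient of a space by a closed equivalence relation is Hausdorff once the quotient map is open; here the stated hypothesis that $X$ itself is Hausdorff will in fact not be needed, only closedness of $R$ and openness of $p$. First I would take two distinct points of $X/R$ and lift them, choosing $x,y\in X$ with $p(x)\neq p(y)$. Then $(x,y)\notin R$, and since $R$ is a closed subset of $X\times X$, its complement is open in the product topology, so there exist open sets $U\ni x$ and $V\ni y$ in $X$ with $(U\times V)\cap R=\emptyset$.

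Next I would push these neighborhoods down. Because $p$ is an open map, $p(U)$ and $p(V)$ are open subsets of $X/R$ containing $p(x)$ and $p(y)$ respectively. The only thing left to verify is that $p(U)\cap p(V)=\emptyset$. Suppose, for contradiction, that some $\zeta\in X/R$ lies in $p(U)\cap p(V)$; then there are $u\in U$ and $v\in V$ with $p(u)=\zeta=p(v)$, which means $u\,R\,v$, i.e.\ $(u,v)\in R$. But $(u,v)\in U\times V$, contradicting $(U\times V)\cap R=\emptyset$. Hence $p(U)$ and $p(V)$ are disjoint open neighborhoods separating $p(x)$ from $p(y)$, and $X/R$ is Hausdorff.

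This argument is entirely routine; there is no real obstacle. The only point requiring a moment's care is the disjointness of $p(U)$ and $p(V)$, where one must remember that membership in the image of $U$ under $p$ produces an actual preimage point in $U$ (so that the relation $R$ can be invoked), and similarly for $V$. I would then apply this lemma with $X=Irr_{w_{L},\infty}(A_{0})$, $p=\rho|_{\mathrm{res}}$, and $R$ the equivalence of representations, once the needed closedness of the graph of this equivalence relation on $Irr_{w_{L},\infty}(A_{0})$ is established (via Lemma \ref{lema2} and Lemma \ref{top}), to conclude that $\hat{A}_{0,w_{L}}$ is Hausdorff and hence homeomorphic to $\mathbb{T}$, and therefore that $T_{q}$ restricts to a homeomorphism $\hat{A}_{0,w_{L}}\to\hat{A}_{q,w_{L}}$.
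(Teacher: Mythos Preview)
Your argument is the standard correct proof of this classical fact, and indeed the hypothesis that $X$ be Hausdorff is superfluous, as you observe. The paper itself does not supply a proof of this lemma at all; it is simply stated as a known result and immediately applied. So there is nothing to compare: your proof is exactly what one would write to fill the gap.

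One small remark on your closing paragraph about the application: the closedness of the equivalence relation on $Irr_{w_{L},\infty}(A_{0})$ is not obtained in the paper from Lemma~\ref{lema2} or Lemma~\ref{top}, but rather from the Proposition that immediately follows the lemma (the one showing that weak convergence $U_m^{*}\pi^{(0)}_{t_m,w_L}(\cdot)U_m \to U^{*}\pi^{(0)}_{t,w_L}(\cdot)U$ forces $t_m\to t$). Lemma~\ref{top} concerns $\hat{A}_{q,w_L}$ only for $q\neq 0$, so it cannot directly give closedness of the relation at $q=0$. This does not affect the correctness of your proof of the lemma itself.
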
 
%\begin{proof}
%Since the map $p:X \to X/R$ is open therefore, the map $p\times p:X\times X \to (X/R)\times (X/R)$ defined by $p\times p(x,y) =(p(x),p(y))$ is also open. Now one can easily verify that
%\begin{center}
   % $p\times p( (X\times X)\setminus Q)= (X/R \times X/R)\setminus D_{X/R}$
%\end{center}
%where $D_{X/R}:=\{(p(x),p(x)): x\in X\}$. As $Q$ is closed, $D_{X/R}$ is also closed. That implies $X/R$ is Hausdorff. 
%\end{proof}
\noindent We will apply this result for $\rho|_{res}$. The equivalence relation will be closed in our case due to the following proposition. In the proof of the next proposition, we will use the notation $t= (t^{(1)},t^{(2)},\cdots,t^{(n)})$ for any $t\in \mathbb{T}$.
\begin{prop}
\textit{Let $U_{m}$ and $U$ be unitaries from $l^{2}(\mathbb{N})\to l^{2}(\mathbb{N})^{\otimes n(n+1)/2}$ such that for any $z\in A_{0}(n)$ and $x,y\in l^{2}(\mathbb{N})$ we have
$\lim_{m\to \infty} \langle x | U^{*}_{m}\pi^{(0)}_{t_{m},w_{L}}(z)U_{m}(y)\rangle = \langle x | U^{*}\pi^{(0)}_{t,w_{L}}(z)U(y)\rangle$. Then $\lim_{m\to \infty}t_{m}=t$.}
\end{prop}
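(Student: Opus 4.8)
The plan is to exploit the very rigid way in which the torus parameter enters the representations $\pi^{(0)}_{t,w_{L}}$, and then to kill the remaining phase ambiguity by a positivity argument. For $t=(t^{(1)},\dots,t^{(n)})\in\mathbb{T}$ write $c_{i}(t):=\chi^{(0)}_{t}(z_{i,i}(0))\in\mathbb{T}$, so that $c_{1}(t)=t^{(1)}$, $c_{n+1}(t)=\overline{t^{(n)}}$ and $c_{i}(t)=\overline{t^{(i-1)}}t^{(i)}$ for $2\le i\le n$. The first point to establish is a factorisation: since $\pi^{(0)}_{t,w_{L}}=\chi^{(0)}_{t}*\pi^{(0)}_{w_{L}}=(\chi^{(0)}_{t}\otimes\pi^{(0)}_{w_{L}})\circ\Delta$ and $\chi^{(0)}_{t}$ is one-dimensional, the comultiplication formula gives $\pi^{(0)}_{t,w_{L}}(z_{i,j}(0))=c_{i}(t)\,\pi^{(0)}_{w_{L}}(z_{i,j}(0))$ for every generator, and therefore $\pi^{(0)}_{t,w_{L}}(b)=g_{b}(t)\,\pi^{(0)}_{w_{L}}(b)$ for every monomial $b=z_{i_{1},j_{1}}(0)\cdots z_{i_{p},j_{p}}(0)$ in the generators, with $g_{b}(t):=\prod_{\ell}c_{i_{\ell}}(t)\in\mathbb{T}$ a continuous unimodular function of $t$ multiplying an operator that is \emph{independent} of $t$.

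Next I would plug into the hypothesis the ``bracket'' monomials $b_{1},\dots,b_{n}$ of the element $a=b_{1}\cdots b_{n}$ from Proposition \ref{key}, where $b_{k}=z_{n-k+2,1}(0)z_{n-k+3,2}(0)\cdots z_{n+1,k}(0)$. By part 3 of Proposition \ref{key} the operators $R_{k}:=\pi^{(0)}_{w_{L}}(b_{k})$ are nonzero and positive, and by the previous paragraph $\pi^{(0)}_{t,w_{L}}(b_{k})=g_{k}(t)R_{k}$ with $g_{k}:=g_{b_{k}}$. A short telescoping computation of the corresponding product of the $c_{i}$'s shows that $g_{1},\dots,g_{n}$ are, up to complex conjugation, the $n$ coordinate functions on $\mathbb{T}$; in particular they separate points.

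Now I use compactness of $\mathbb{T}$: it suffices to show that every subsequential limit $\sigma$ of $(t_{m})$ equals $t$, so pass to a subsequence with $t_{m}\to\sigma$. Fix $k$. Applying the hypothesis to $z=b_{k}$ and using the factorisation gives $\langle x\,|\,U_{m}^{*}g_{k}(t_{m})R_{k}U_{m}y\rangle\to\langle x\,|\,U^{*}g_{k}(t)R_{k}Uy\rangle$ for all $x,y$; since $g_{k}(t_{m})\to g_{k}(\sigma)\ne 0$ by continuity, dividing by $g_{k}(t_{m})$ yields $\langle x\,|\,U_{m}^{*}R_{k}U_{m}y\rangle\to\overline{g_{k}(\sigma)}g_{k}(t)\,\langle x\,|\,U^{*}R_{k}Uy\rangle$. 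Taking $x=y=\xi$, the left-hand side equals $\langle U_{m}\xi\,|\,R_{k}U_{m}\xi\rangle\ge 0$ because $R_{k}\ge 0$, so the limit $\overline{g_{k}(\sigma)}g_{k}(t)\,\langle U\xi\,|\,R_{k}U\xi\rangle$ is $\ge 0$ as well; choosing $\xi=U^{*}\zeta$ with $\langle\zeta\,|\,R_{k}\zeta\rangle>0$ (possible since $R_{k}$ is positive and nonzero and $U$ is onto) forces $\overline{g_{k}(\sigma)}g_{k}(t)\ge 0$, hence $=1$ as it is unimodular, i.e. $g_{k}(\sigma)=g_{k}(t)$. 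Letting $k$ run over $1,\dots,n$ and using that the $g_{k}$ separate points gives $\sigma=t$, and hence $t_{m}\to t$.

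The main obstacle to be aware of --- and the reason a direct attempt fails --- is that each $U_{m}$ could in principle absorb a $t_{m}$-dependent phase, so no matrix coefficient of a single operator can detect $t_{m}$. The way around this is exactly the two features above: after the factorisation the torus coordinates occur only as \emph{scalar} multipliers of operators that do not depend on the parameter, and those operators can be chosen \emph{positive}, so a limiting unimodular multiple of such an operator is forced to be positive, which pins the phase to $1$. Thus the real content of the proof is the factorisation step together with the identification of positive test elements whose multipliers separate the torus; the rest is routine.
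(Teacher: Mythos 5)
Your proof is correct and follows essentially the same route as the paper's: factor $\pi^{(0)}_{t,w_{L}}$ on the bracket monomials of $a$ as a unimodular scalar (which, telescoping, is a conjugate coordinate of $t$) times the $t$-independent positive operator $\pi^{(0)}_{w_{L}}(b_{k})$ from part 3 of Proposition \ref{key}, and use positivity of the diagonal matrix coefficients to pin down the phase. The only cosmetic differences are that you pass to a convergent subsequence and invoke separation of points, where the paper divides the convergent product by the convergent positive sequence directly, and that the paper handles the first and last coordinates separately via $z_{1,n+1}$ and $z_{n+1,1}$.
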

\begin{proof}
Using the elements $z^{ }_{1,n+1}$ and $z^{ }_{n+1,1}$ one can show the first and last component of the sequence $t_{m}$ converges to the first and last component of $t$ respectively. For the convergence of other components of $t_{m}$, we need part 3 of the Proposition \ref{key} particularly the positivity of the operators. Choosing $x=y$ and bracket elements of $a$ suitably one can conclude that all other components of $t_{m}$ converge to components of $t$. For instance, let us focus on the convergence $t_{m}^{(n-1)}\to t^{(n-1)}$. Choose $x=y$ appropriately so that
\begin{center}
    $\overline{t^{(n-1)}}\langle x | U^{*}\pi^{(0)}_{w_{L}}(z^{ }_{n,1}z^{ }_{n+1,2})U(x)\rangle =\langle x | U^{*}\pi^{(0)}_{t,w_{L}}(z^{ }_{n,1}z^{ }_{n+1,2})U(x)\rangle \neq 0$.
\end{center} 
Therefore for all but finitely many $m$,
\begin{center}
    $\overline{t_{m}^{(n-1)}}\langle x | U^{*}_{m}\pi^{(0)}_{w_{L}}(z^{ }_{n,1}z^{ }_{n+1,2})U_{m}(x)\rangle = \langle x | U^{*}_{m}\pi^{(0)}_{t_{m},w_{L}}(z^{ }_{n,1}z^{ }_{n+1,2})U_{m}(x) \rangle \neq 0.$
\end{center}
Also by part 3 of Proposition $\ref{key}$, we have that the quantities $\langle x | U^{*}_{m}\pi^{(0)}_{w_{L}}(z^{ }_{n,1}z^{ }_{n+1,2})U_{m}(x)\rangle$ and $\langle x | U^{*}\pi^{(0)}_{w_{L}}(z^{ }_{n,1}z^{ }_{n+1,2})U(x) \rangle$ are both positive as $\pi^{(0)}_{w_{L}}(z^{ }_{n,1}z^{ }_{n+1,2})$ is a positive operator consisting of elementary tensor of $P_{0}$ and $I$. Therefore, we have
\begin{IEEEeqnarray*}{rCl}
\lim_{m\to \infty} \langle x | U^{*}_{m}\pi^{(0)}_{w_{L}}
    (z^{ }_{n,1}z^{ }_{n+1,2})U_{m}(x)\rangle 
      &=& \lim_{m\to \infty} \left| \langle x | 
            U^{*}_{m}\pi^{(0)}_{t_{m},w_{L}}(z^{ }_{n,1}
            z^{ }_{n+1,2})U_{m}(x)\rangle \right|\\
      &=& \left|\langle x | U^{*}\pi^{(0)}_{t,w_{L}}
            (z^{ }_{n,1}z^{ }_{n+1,2})U(x)\rangle\right| \\
& = &  \langle x | U^{*}\pi^{(0)}_{w_{L}}(z^{ }_{n,1}z^{ }_{n+1,2})U(x) \rangle.
\end{IEEEeqnarray*}
Now it is evident that $t_{m}^{(n-1)}\to t^{(n-1)}$. 
\end{proof} 
\noindent Therefore, the relative topology on $\hat{A}_{0,w_{L}}$ is Hausdorff despite $\hat{A}_{0}$ being non-Hausdorff.\\ 
We now proceed to prove that  $\displaystyle \int_{\hat{A}_{0,w_{L}}}^{\bigoplus} V^{*}_{w_{L}}\pi^{(q)}_{t,w_{L}}( . )V_{w_{L}} d\mu=\int_{\hat{A}_{0}}^{\bigoplus} \psi_{q}(T_{q}(\tau_{0})) d\nu = \pi^{(q)}$ is faithful if   $\displaystyle \int_{\hat{A}_{0,w_{L}}}^{\bigoplus} V^{*}_{w_{L}}\pi^{(0)}_{t,w_{L}}( . )V_{w_{L}} d\mu=\int_{\hat{A}_{0}}^{\bigoplus} \psi_{0}(\tau_{0}) d\nu = \pi^{(0)}$ is faithful where $\nu$ is a new Borel measure on $\hat{A}_{0}$ which is equal to $\mu$ on $\hat{A}_{0,w_{L}}$ and 0 otherwise. Using the standard change of variable formula one can write $\pi^{(q)} \cong \displaystyle \int_{\hat{A}_{q}}^{\bigoplus} \psi_{q}(\tau_{q})dT_{q}\nu$. So, for proving the faithfulness of $\pi^{(q)}$, it is enough to show that  $supp(T_{q}\nu)=\hat{A}_{q}$ (see Proposition \ref{support}). As support is a closed set, it further reduces to prove $\hat{A}_{q,w_{L}}\subseteq supp(T_{q}\nu)$ (see Corollary \ref{density}).  
\begin{prop}
$\label{q-0}$
\textit{$\pi^{(q)}$ is a faithful if and only if $\pi^{(0)}$ is faithful. }
\end{prop}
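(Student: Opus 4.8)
\noindent The plan is to peel everything back to a statement about supports of measures and then transport that statement along $T_q$. First I would note that, being direct integrals of irreducible representations, both $\pi^{(0)}$ and $\pi^{(q)}$ are non-degenerate, and that by the change-of-variables formula for direct integrals together with the Borel isomorphism of Theorem \ref{iso1} one has $\pi^{(q)}\cong\int_{\hat{A}_{q}}^{\bigoplus}\psi_{q}(\tau_{q})\,d(T_{q}\nu)$. After identifying $\psi_{0},\psi_{q}$ with the canonical fields of Theorem \ref{br1} via Corollary \ref{exist}, both representations are of the form to which Proposition \ref{support} applies, so $\pi^{(0)}$ is faithful iff $\mathrm{supp}(\nu)=\hat{A}_{0}$, and $\pi^{(q)}$ is faithful iff $\mathrm{supp}(T_{q}\nu)=\hat{A}_{q}$. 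It thus suffices to show $\mathrm{supp}(\nu)=\hat{A}_{0}\iff\mathrm{supp}(T_{q}\nu)=\hat{A}_{q}$.

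Next I would localise to the big cell. The measure $\nu$ is concentrated on the Borel set $\hat{A}_{0,w_{L}}$ (Proposition \ref{iso}), which is dense in $\hat{A}_{0}$ (Corollary \ref{density}); since supports are closed, $\mathrm{supp}(\nu)=\hat{A}_{0}$ is equivalent to $\hat{A}_{0,w_{L}}\subseteq\mathrm{supp}(\nu)$. Because $\nu$ gives zero mass to the complement of $\hat{A}_{0,w_{L}}$, for $\rho\in\hat{A}_{0,w_{L}}$ an ambient open neighbourhood $W$ and the relatively open neighbourhood $W\cap\hat{A}_{0,w_{L}}$ have the same $\nu$-mass; hence $\mathrm{supp}(\nu)\cap\hat{A}_{0,w_{L}}$ is exactly the support of $\mu=\nu|_{\hat{A}_{0,w_{L}}}$ computed in the relative (Hausdorff) topology of $\hat{A}_{0,w_{L}}$. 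So $\pi^{(0)}$ is faithful iff that relative support is all of $\hat{A}_{0,w_{L}}$; running the identical argument for $q$, using Corollary \ref{density} again, $\pi^{(q)}$ is faithful iff the support of $(T_{q}\nu)|_{\hat{A}_{q,w_{L}}}$ in the relative topology of $\hat{A}_{q,w_{L}}$ is all of $\hat{A}_{q,w_{L}}$. Finally, by the results just established, $T_{q}$ restricts to a homeomorphism $\hat{A}_{0,w_{L}}\to\hat{A}_{q,w_{L}}$ (for $q\ne0$ this comes from Lemmas \ref{polish}, \ref{lema2} and \ref{top} together with Hausdorffness of $\hat{A}_{0,w_{L}}$, under which $T_{q}$ becomes the identity of $\mathbb{T}$); a homeomorphism carries the support of a measure onto the support of its pushforward, and $(T_{q}\nu)|_{\hat{A}_{q,w_{L}}}$ is precisely the pushforward of $\mu$. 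Hence the two relative supports are full simultaneously, and the proposition follows. The multiplicity version, with the $\mu_{i}$ of Theorem \ref{br2}, is handled by running the same argument on $\sum_{i}\mu_{i}$, exactly as in the Corollary to Proposition \ref{support}.

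The only real substance is bookkeeping: the identity $\mathrm{supp}(\nu)\cap\hat{A}_{0,w_{L}}=\mathrm{supp}_{\hat{A}_{0,w_{L}}}(\mu)$ rests on $\nu$ being concentrated on the dense cell, so that probing by ambient open sets and by relatively open sets gives the same answer, and one must make sure Proposition \ref{support} is invoked for genuinely non-degenerate direct integrals after the change of variables. The genuinely hard input — that $\hat{A}_{0,w_{L}}$ is Hausdorff, so that $T_{q}$ restricts to an honest homeomorphism of big cells rather than a mere Borel bijection — has already been secured before the statement, so it is only quoted; that is where I would have expected the difficulty to concentrate had it not been dealt with.
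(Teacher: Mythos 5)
Your proposal is correct and follows essentially the same route as the paper: reduce faithfulness to fullness of supports via Proposition \ref{support}, localise to the dense open cell $\hat{A}_{0,w_{L}}$ (Corollary \ref{density} and part 2 of Proposition \ref{key}), and transport along the restricted homeomorphism $T_{q}\colon\hat{A}_{0,w_{L}}\to\hat{A}_{q,w_{L}}$; the paper merely phrases the last step as an explicit open-set chase by contradiction rather than as ``homeomorphisms carry supports to supports of pushforwards.''
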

\begin{proof}
Assume that supp$(\nu)=\hat{A}_{0}$ but there is an element $x\in \hat{A}_{q,w_{L}}$ and $x$ is outside $supp(T_{q}\nu)$. That means we can find a non-empty open subset $V\subseteq \hat{A}_{q}$ , $x\in V$ and $T_{q}\nu(V)=0$. Hence $x\in V\cap \hat{A}_{q,w_{L}}$ is a non-empty open subset of $\hat{A}_{q,w_{L}}$ with $T_{q}\nu(V\cap \hat{A}_{q,w_{L}})=0$.  That implies $\nu(T_{q}^{-1}(V\cap \hat{A}_{q,w_{L}}))=0$. But $T_{q}^{-1}(V\cap \hat{A}_{q,w_{L}})$ is non-empty open in $\hat{A}_{0,w_{L}}$ (using the homeomorphism). By part 2 of the Proposition $\ref{key}$, $\hat{A}_{0,w_{L}}$ is open in $\hat{A}_{0}$. So, $T_{q}^{-1}(V\cap \hat{A}_{q,w_{L}})$ is a non-empty open subset of $\hat{A}_{0}$ with $\nu(T_{q}^{-1}(V\cap \hat{A}_{q,w_{L}}))=0$ which contradicts our assumption.\\
Converse follows along the same lines. 
\end{proof}
\noindent Now we are ready to formulate our concluding results. 
\begin{thm}
\label{main1}
\textit{Let $\pi^{(0)}$ be a non-degenerate faithful representation of $A_{0}$
on a seperable Hilbert space $\mathcal{H}$. Then there exist non-degenerate faithful
representations $\pi^{(q)}$ of $A_{q}$ 
on the same Hilbert space $\mathcal{H}$ such that
%%---------------------------
\begin{IEEEeqnarray*}{rCl}
\pi^{(0)}(z_{k,j}^{}(0))=
 \lim_{q\to 0+}
 \pi^{(q)}\left((-q)^{\min\{k-j,0\}}z_{k,j}^{}(q)\right),\qquad
  k,j\in\{1,2,\ldots,n+1\}.
\end{IEEEeqnarray*}}
%%---------------------------
\end{thm}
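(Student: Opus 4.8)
The plan is to reuse the construction from the proof of Theorem \ref{main} and then superimpose a faithfulness argument. Given a non-degenerate faithful $\pi^{(0)}$ on a separable $\mathcal{H}$, I would first disintegrate it by Theorem \ref{br2} as $\bigoplus_{i\le\aleph_0} i\int_{\hat{A}_0}^{\bigoplus}\psi_0(\tau_0)\,d\mu_i$ for mutually singular positive Borel measures $\mu_1,\dots,\mu_\infty$ on $\hat{A}_0$, and set $\pi^{(q)}:=\bigoplus_{i\le\aleph_0} i\int_{\hat{A}_0}^{\bigoplus}\psi_q(T_q(\tau_0))\,d\mu_i$ with $T_q\colon\hat{A}_0\to\hat{A}_q$ the Borel isomorphism of Theorem \ref{iso1}. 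Exactly as in Theorem \ref{main}, $\pi^{(q)}$ is well defined (Theorem \ref{iso1} and Corollary \ref{exist}), non-degenerate (8.1.4 of \cite{Dix-1977}), and the uniform estimate $\|\pi^{(q)}_{t,w}((-q)^{\min\{k-j,0\}}z_{k,j}(q))-\pi^{(0)}_{t,w}(z_{k,j}(0))\|<\varepsilon$ over all $t\in\mathbb{T}$ and reduced words $w$ gives the claimed norm convergence verbatim. Thus the only new content beyond Theorem \ref{main} is that this particular $\pi^{(q)}$ is faithful.

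For faithfulness I would pass to the support criterion. By the corollary following Proposition \ref{support}, $\pi^{(0)}$ is faithful iff $\operatorname{supp}\!\big(\sum_i\mu_i\big)=\hat{A}_0$, and, writing $\pi^{(q)}\cong\bigoplus_i i\int_{\hat{A}_q}^{\bigoplus}\psi_q(\tau_q)\,d(T_q\mu_i)$ by the change-of-variables formula, $\pi^{(q)}$ is faithful iff $\operatorname{supp}\!\big(\sum_i T_q\mu_i\big)=\hat{A}_q$. Since supports are closed and $\hat{A}_{q,w_L}$ is dense in $\hat{A}_q$ by Corollary \ref{density}, it suffices to show $\hat{A}_{q,w_L}\subseteq\operatorname{supp}\!\big(\sum_i T_q\mu_i\big)$. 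If this failed there would be a non-empty open $V\subseteq\hat{A}_q$ with $V\cap\hat{A}_{q,w_L}\neq\emptyset$ and $\big(\sum_i T_q\mu_i\big)(V)=0$, hence $\big(\sum_i\mu_i\big)\big(T_q^{-1}(V\cap\hat{A}_{q,w_L})\big)=0$.

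The decisive point is that $T_q$ restricts to a homeomorphism $\hat{A}_{0,w_L}\to\hat{A}_{q,w_L}$. This I would deduce from Lemma \ref{top} (which for $q\neq0$ identifies $\hat{A}_{q,w_L}$ with $\mathbb{T}$) together with the Hausdorffness of $\hat{A}_{0,w_L}$ established above, since $\mathbb{T}\to\hat{A}_{0,w_L}$ is already a continuous bijection by Lemma \ref{polish} and a continuous bijection from a compact space onto a Hausdorff space is a homeomorphism; under the two identifications with $\mathbb{T}$ the map $T_q$ is the identity. Consequently $T_q^{-1}(V\cap\hat{A}_{q,w_L})$ is a non-empty open subset of $\hat{A}_{0,w_L}$, and as $\hat{A}_{0,w_L}$ is open in $\hat{A}_0$ by part 2 of Proposition \ref{key}, it is a non-empty open subset of $\hat{A}_0$ of $\big(\sum_i\mu_i\big)$-measure zero — contradicting full support. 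Hence $\pi^{(q)}$ is faithful. This last step is, in effect, Proposition \ref{q-0} carried over from a single direct integral to the general decomposition of Theorem \ref{br2}, which needs nothing beyond $\operatorname{supp}(\mu_i)\subseteq\operatorname{supp}(\sum_j\mu_j)$ for each $i$.

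I expect no serious new obstacle here: the genuine difficulties — the Hausdorffness of $\hat{A}_{0,w_L}$, the homeomorphism property of $T_q|_{\hat{A}_{0,w_L}}$, and the support characterization of faithfulness — have all been settled in the preceding sections, so the remaining work is organizational. The one point demanding care is that the openness of the relevant sets is genuinely transported through \emph{both} spectra $\hat{A}_0$ and $\hat{A}_q$ (not merely their Borel structures), which is precisely why the argument uses the homeomorphism on the dense piece $\hat{A}_{q,w_L}$ rather than just the global Borel isomorphism $T_q$.
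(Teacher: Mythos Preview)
Your proposal is correct and follows essentially the same approach as the paper: reuse the construction of Theorem \ref{main} and then deduce faithfulness of $\pi^{(q)}$ via the support criterion together with the homeomorphism $T_q|_{\hat{A}_{0,w_L}}$ and the density of $\hat{A}_{q,w_L}$, exactly as in Proposition \ref{q-0}. The only minor difference is that you apply the corollary of Proposition \ref{support} directly to the full decomposition and bypass Theorem \ref{faith}, whereas the paper first filters out the minimal faithful part over $\hat{A}_{0,w_L}$; this is an organizational rather than a substantive distinction.
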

\begin{proof}
    Choose $\pi^{(q)}$ as given in the proof of the Theorem $\ref{main}$. Use Theorem $\ref{faith}$ to filter out the minimal faithful part of $\pi^{(0)}$. Faithfulness of $\pi^{(q)}$ now follows from the Propositions $\ref{support}$ and $\ref{q-0}$. 
\end{proof}
\noindent The next theorem establishes the converse that any non-degenerate faithful representation of $A_{q}(n)$ for $q\neq 0$ gives rise to a non-degenerate faithful representation of the crystallized algebra in a natural way.
\begin{thm}
\label{main2} 
\textit{For any $q_{0}\in (0,1)$, let $\pi^{(q_{0})}$ be a non-degenerate faithful representation of $A_{q_{0}}$
on a seperable Hilbert space $\mathcal{H}$. Then there exist non-degenerate faithful
representations $\pi^{(q)}$ of $A_{q}$ 
on the same Hilbert space $\mathcal{H}$ for all $q\in [0, q_{0})$ such that
\begin{IEEEeqnarray*}{rCl}
\pi^{(0)}(z_{k,j}^{}(0))=
 \lim_{q\to 0+}
 \pi^{(q)}\left((-q)^{\min\{k-j,0\}}z_{k,j}^{}(q)\right),\qquad
  k,j\in\{1,2,\ldots,n+1\}.
\end{IEEEeqnarray*}}
\end{thm}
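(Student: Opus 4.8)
The plan is to transport the disintegration data of $\pi^{(q_{0})}$ down to the crystallized algebra via the Borel isomorphisms $T_{q}$ and then read off the family from the forward direction, Theorem~\ref{main1}. First I would disintegrate: by Theorems~\ref{br2} and \ref{br1} write, up to unitary equivalence, $\pi^{(q_{0})}\cong\bigoplus_{i}i\int_{\hat{A}_{q_{0}}}^{\bigoplus}\psi_{q_{0}}(\tau_{q_{0}})\,d\mu_{i}(\tau_{q_{0}})$ for mutually singular positive Borel measures $\mu_{1},\dots,\mu_{\infty}$ on $\hat{A}_{q_{0}}$, with $\psi_{q_{0}}$ the explicit field of irreducibles fixed in Corollary~\ref{exist}; by the corollary to Proposition~\ref{support}, faithfulness of $\pi^{(q_{0})}$ is equivalent to $\mathrm{supp}\big(\sum_{i}\mu_{i}\big)=\hat{A}_{q_{0}}$. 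Put $\Phi_{q}:=T_{q}\circ T_{q_{0}}^{-1}\colon\hat{A}_{q_{0}}\to\hat{A}_{q}$ for $q\in(0,q_{0})$ and $\Phi_{0}:=T_{q_{0}}^{-1}\colon\hat{A}_{q_{0}}\to\hat{A}_{0}$, using Theorem~\ref{iso1}; each $\Phi_{q}$ carries the stratum $\hat{A}_{q_{0},w}$ onto $\hat{A}_{q,w}$ for every reduced word $w$. For each $q\in[0,q_{0})$ set $\pi^{(q)}:=\bigoplus_{i}i\int_{\hat{A}_{q}}^{\bigoplus}\psi_{q}(\tau_{q})\,d\big((\Phi_{q})_{*}\mu_{i}\big)(\tau_{q})$, equivalently $\bigoplus_{i}i\int_{\hat{A}_{0}}^{\bigoplus}\psi_{q}(T_{q}(\tau_{0}))\,d\big((T_{q_{0}}^{-1})_{*}\mu_{i}\big)(\tau_{0})$, exactly as in the proof of Theorem~\ref{main}. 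Since the unitaries $V_{w}$ defining $\psi_{q}$ are independent of $q$, all the $\pi^{(q)}$ (and $\pi^{(q_{0})}$ itself) act on one and the same Hilbert space, which we identify with $\mathcal{H}$; non-degeneracy is automatic (8.1.4 of \cite{Dix-1977}); and the limit $\pi^{(0)}(z_{k,j}(0))=\lim_{q\to0^{+}}\pi^{(q)}((-q)^{\min\{k-j,0\}}z_{k,j}(q))$ follows verbatim from Theorem~\ref{main}, the norm estimate there being uniform over $t\in\mathbb{T}$ and over reduced words. (Alternatively, once $\pi^{(0)}$ is shown faithful, one may simply apply Theorem~\ref{main1} to it to obtain the $\pi^{(q)}$, $q\in(0,q_{0})$.)

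The substance of the argument is to show that each $\pi^{(q)}$, $q\in[0,q_{0})$, is faithful, i.e.\ that $\mathrm{supp}\big(\sum_{i}(\Phi_{q})_{*}\mu_{i}\big)=\hat{A}_{q}$. The obstruction is that $\Phi_{q}$ is only a \emph{Borel} isomorphism, so it need not carry full support to full support; this is circumvented by localizing on the longest-word stratum, where one does have a homeomorphism. The single non-formal input is the $q_{0}\neq0$ analogue of Proposition~\ref{key}(2): $\hat{A}_{q_{0},w_{L}}$ is open in $\hat{A}_{q_{0}}$. One may cite this from the description of the topology of $\hat{A}_{q_{0}}$ in Theorem~4.1 of \cite{NesTus-2012}, or prove it directly by exhibiting an element of $A_{q_{0}}$ — a $q$-deformation of the element $a$ of Proposition~\ref{key} — lying in $\mathrm{Ker}\,\pi^{(q_{0})}_{t,w}$ for every $w\neq w_{L}$ (using the relations of $A_{q_{0}}(n)$ and Theorem~\ref{Bfactor}, since $\{w_{1},\dots,w_{n}\}$ exhausts the maximal Bruhat subwords of $w_{L}$) but not in $\mathrm{Ker}\,\pi^{(q_{0})}_{t,w_{L}}$. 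Granting openness, $\sum_{i}\mu_{i}$ restricts to a measure of full support on the subspace $\hat{A}_{q_{0},w_{L}}\cong\mathbb{T}$. Now $\Phi_{q}$ maps $\hat{A}_{q_{0},w_{L}}$ \emph{homeomorphically} onto $\hat{A}_{q,w_{L}}$ — this is Lemma~\ref{top} together with the discussion preceding Proposition~\ref{q-0}, where $T_{q}$ (hence $\Phi_{q}$) is shown to restrict to a homeomorphism of $w_{L}$-strata — so $\sum_{i}(\Phi_{q})_{*}\mu_{i}$ has full support on the subspace $\hat{A}_{q,w_{L}}$. Hence for every $x\in\hat{A}_{q,w_{L}}$ and every open $V\ni x$ in $\hat{A}_{q}$ the nonempty open set $V\cap\hat{A}_{q,w_{L}}$ has positive measure, so $x\in\mathrm{supp}\big(\sum_{i}(\Phi_{q})_{*}\mu_{i}\big)$; since supports are closed and $\hat{A}_{q,w_{L}}$ is dense in $\hat{A}_{q}$ (Corollary~\ref{density}), the support is all of $\hat{A}_{q}$, and Proposition~\ref{support} with its corollary gives faithfulness of $\pi^{(q)}$. (The converse implication, faithfulness of $\pi^{(q)}$ forcing that of $\pi^{(q_{0})}$, is not needed but is proved the same way.)

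I expect essentially all the genuine work to sit in establishing that $\hat{A}_{q_{0},w_{L}}$ is open in $\hat{A}_{q_{0}}$ for $q_{0}\neq0$; the rest is a transcription of the arguments already used for Theorems~\ref{main}, \ref{main1} and Proposition~\ref{q-0}, with $T_{q}$ replaced by $\Phi_{q}=T_{q}\circ T_{q_{0}}^{-1}$. A secondary point to be careful about is the bookkeeping that identifies the carrier Hilbert spaces of all the $\pi^{(q)}$ with $\mathcal{H}$, which again rests only on the $q$-independence of the $V_{w}$.
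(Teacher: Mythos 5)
Your proof is correct and follows essentially the same route as the paper: localize to the longest-word stratum (using that $\hat{A}_{q_{0},w_{L}}$ is open and dense, via Theorem 4.1 of \cite{NesTus-2012}), transport the measure through the homeomorphism of the $w_{L}$-strata, and conclude full support on all of $\hat{A}_{q}$ by density and Proposition \ref{support}. The only divergence is cosmetic: where you pass from faithfulness of $\pi^{(q_{0})}$ to full support of $\sum_{i}\mu_{i}$ on the open stratum purely measure-theoretically, the paper instead proves the operator-algebraic lemma that $\pi^{(q)}$ is faithful iff its $w_{L}$-part $Q_{L}$ is faithful (using the type I property and compact operators) --- the two arguments effect the same localization.
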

\begin{proof}
To prove this, we need a $q\neq 0$ version of Proposition $\ref{faith}$, that is, $\pi^{(q)}\cong \displaystyle \int_{\hat{A}_{q}}^{\bigoplus} \psi_{q}(\tau_{q})d\mu$ is faithful if and only if $Q_{L}:= \displaystyle \int_{\hat{A}_{q,w_{L}}}^{\bigoplus}V^{*}_{w_{L}}\pi^{(q)}_{t,w_{L}}( . )V_{w_{L}} d\mu$ is faithful. It can similarly be proved by exploiting the fact that $\hat{A}_{q,w_{L}}$ is an open dense subset of $\hat{A}_{q}$ for $q\neq 0$ (Theorem 4.1, \cite{NesTus-2012}) and $A_{q}(n)$ is a type I $C^{*}$-algebra. Let $\pi^{(q)}$ be faithful and $z\in A_{q}$ be such that $Q_{L}(z)=0$. We have to prove that $z=0$.  It again suffices to show that for any $s\in \mathbb{T}$, $\pi^{(q)}_{s,w_{L}}(z)=0$. Now using the description of closed subsets of $\hat{A}_{q}$ we can find a non-zero element $a\in \bigcap_{[\pi]\in \hat{A}_{q}\setminus \hat{A}_{q,w_{L}}}Ker\,\pi$ but  $\pi^{(q)}_{s,w_{L}}(a)=K\neq 0$. Let $K_{1}$ be a any compact operator on $l^{2}(\mathbb{N})^{\otimes l(w_{L})}$. Since $A_{q}$ is of type I, there exists an element $z_{K_{1}}\in A_{q}$ such that $\pi^{(q)}_{s,w_{L}}(z_{K_{1}})=K_{1}$. Now observe that $zz_{K_{1}}a\in Ker\pi^{(q)}$. So we have  $zz_{K_{1}}a=0$. This again implies $\pi^{(q)}_{s.w_{L}}(z)K_{1}(K)=0$ where $K_{1}$ can be choosen to be any arbitrary compact operator. Hence it follows that $\pi^{(q)}_{s,w_{L}}(z)=0$.\\
The rest of the argument follows similarly by performing the steps mentioned in the proof of Theorem $\ref{main1}$ at $q\neq 0$ level, using the canonical homeomorphism between $\hat{A}_{0,w_{L}}$ and $\hat{A}_{q,w_{L}}$ and Proposition $\ref{support}$. 
\end{proof}
\begin{cor}
\label{cor}
\textit{Given any $q_{0}\in (0,1)$ and a non-degenerate faithful representation $\pi^{(q_{0})}$ of $C(SU_{q_{0}}(n+1))$ on a separable Hilbert space $\mathcal{H}$, there exist non-degenerate faithful representations $\pi^{(q)}$ of $C(SU_{q}(n+1))$ on the same Hilbert space $\mathcal{H}$ for all $q\in (0,q_{0})$ such that the following limits exist in norm
 \begin{IEEEeqnarray*}{rCl}
 \lim_{q\to 0+}
 \pi^{(q)}\left((-q)^{\min\{k-j,0\}}z_{k,j}^{}(q)\right)\in B(\mathcal{H}),\qquad
  k,j\in\{1,2,\ldots,n+1\}
\end{IEEEeqnarray*}
and the $C^{*}$-subalgebra of $B(\mathcal{H})$ generated by these limit operators is isomorphic to the crystallized algebra $C(SU_{0}(n+1))$.}
\end{cor}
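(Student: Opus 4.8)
The plan is to read the corollary off Theorem \ref{main2} together with two elementary facts about $C^{*}$-algebras. First I would apply Theorem \ref{main2} to the given non-degenerate faithful representation $\pi^{(q_{0})}$ of $C(SU_{q_{0}}(n+1))=A_{q_{0}}(n)$; this yields non-degenerate faithful representations $\pi^{(q)}$ of $A_{q}(n)$ on the same separable Hilbert space $\mathcal{H}$ for every $q\in[0,q_{0})$ --- in particular a non-degenerate faithful representation $\pi^{(0)}$ of $A_{0}(n)$ --- such that
\begin{IEEEeqnarray*}{rCl}
\pi^{(0)}(z_{k,j}(0))=\lim_{q\to 0+}\pi^{(q)}\big((-q)^{\min\{k-j,0\}}z_{k,j}(q)\big),\qquad k,j\in\{1,2,\ldots,n+1\}.
\end{IEEEeqnarray*}
Retaining the $\pi^{(q)}$ for $q\in(0,q_{0})$ and $\pi^{(0)}$ as above, this already gives the first assertion: the displayed limits exist in $B(\mathcal{H})$, and they are norm limits because the bound established inside the proofs of Theorems \ref{main} and \ref{main2} is of the form $\|\pi^{(q)}(c_{k,j}(q)z_{k,j}(q))-\pi^{(0)}(z_{k,j}(0))\|\le\varepsilon$ for all sufficiently small $q$, the cut-off being chosen uniformly over the fibre data $(t,w)$ of the direct-integral decomposition.

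It then remains to identify the $C^{*}$-algebra generated by the limit operators. By the display, that limit operator is exactly $\pi^{(0)}(z_{k,j}(0))$, so the algebra in question is $C^{*}\big(\{\pi^{(0)}(z_{k,j}(0)):1\le k,j\le n+1\}\big)\subseteq B(\mathcal{H})$. Since the elements $z_{k,j}(0)$ generate $A_{0}(n)=C(SU_{0}(n+1))$ as a $C^{*}$-algebra, the $*$-subalgebra they generate is norm-dense in $A_{0}(n)$; pushing this through the $*$-homomorphism $\pi^{(0)}$ and using the standard fact that a $*$-homomorphism between $C^{*}$-algebras has closed range, I obtain $C^{*}\big(\{\pi^{(0)}(z_{k,j}(0))\}\big)=\pi^{(0)}(A_{0}(n))$. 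Finally, since $\pi^{(0)}$ is faithful it is an isometric $*$-isomorphism onto its image, so $\pi^{(0)}(A_{0}(n))\cong A_{0}(n)=C(SU_{0}(n+1))$, which is the desired isomorphism.

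There is essentially no new obstacle here: the corollary is a repackaging of Theorem \ref{main2}. The one point deserving care is the passage from strong-pointwise convergence to operator-norm convergence of the scaled generators, but this is already dealt with inside the proofs of Theorems \ref{main} and \ref{main2} via the uniformity (in $t$ and in the reduced word $w$) of the estimate $\|\pi^{(q)}_{t,w}(c_{k,j}(q)z_{k,j}(q))-\pi^{(0)}_{t,w}(z_{k,j}(0))\|<\varepsilon$; it is precisely this uniformity that converts the fibrewise estimates into a single norm bound valid on all of $\mathcal{H}$.
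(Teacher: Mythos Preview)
Your proposal is correct and matches the paper's intended argument: the corollary is stated there without proof, as an immediate consequence of Theorem~\ref{main2}, and your derivation---applying Theorem~\ref{main2} to obtain the faithful $\pi^{(0)}$, then using faithfulness together with the fact that the $z_{k,j}(0)$ generate $A_{0}(n)$---is exactly the reasoning implicit in the paper. Your remark on norm convergence (via the uniform-in-$(t,w)$ estimate from the proof of Theorem~\ref{main}) is also on point and is the one substantive check needed.
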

\begin{rem}
    Let $\mu$ be the Haar measure on $\mathbb{T}$. Then, one can choose $\pi^{(q)}$ to be the faithful representation $\displaystyle \int_{\mathbb{T}}^{\bigoplus}\pi^{(q)}_{t,w_{L}}(.)d\mu$ (the Soibelman representation) of $C(SU_{q}(n+1))$, to conclude that the crystallized algebra $C(SU_{0}(n+1))$ is isomorphic to the $C^{*}$-algebra generated by the limit operators of the Soibelman representations of $C(SU_{q}(n+1))$'s as observed in Theorem 8.4 of $\cite{GirPal-2024}$.
\end{rem}
\noindent \textbf{Acknowledgement}- We are greatly indebted to Prof. Arup K. Pal for his valuable suggestions and constant support time and again. We would also like to thank the Indian Statistical Institute for supporting us both in the form of a Ph.D. fellowship. 

\vskip 0.2cm
\textbf{Saikat Das}\\
Theoretical Statistics and Mathematics Unit, Indian Statistical Institute, Delhi, India;\\
saikat20r@isid.ac.in, saikat1811@gmail.com
\vskip 0.6cm
\noindent \textbf{Ayan Dey}\\
Theoretical Statistics and Mathematics Unit, Indian Statistical Institute, Delhi, India;\\
ayan22r@isid.ac.in, studentayandey@gmail.com
\end{document}